% !TeX document-id = {0b52936a-dea1-4777-bcf5-2d213061d759}
% !TEX TS-program = LuaLaTeX
% !TEX encoding = UTF-8 Unicode

%\documentclass[a4paper,parskip]{scrartcl}
\documentclass[a4paper,
  headings=standardclasses
]{scrartcl}
\usepackage[a4paper,margin=2cm]{geometry}

%\usepackage[
%   style=alphabetic,
%]{biblatex}

%% Pour pouvoir utiliser plein de maths

\usepackage{mathtools}
\usepackage{amssymb}
\usepackage{amsthm}

\usepackage{algorithm}
\usepackage[noend]{algpseudocode}

%\usepackage[inline]{showlabels}
%% Pour n'afficher les numéros des equations que si elles sont référencées.
%% MARCHE PAS ON DIRAIT 
%\mathtoolsset{showonlyrefs}

%% Pour pouvoir avoir des \begin{description}[style=nextline]
%\usepackage[shortlabels]{enumitem}
%\usepackage{tikz-cd}

%% Pour faire des beaux dessins
\usepackage{tikz}
\usepackage{tikz-cd}

%% easylist simplifie les listes
%\usepackage[ampersand]{easylist}

%% pour faire des zolis dessins
%\usepackage{tikz}

%\usepackage[hidelinks]{hyperref}
\usepackage{cleveref}

\usepackage{xcolor}
%\hypersetup{
%    colorlinks,
%    linkcolor={red!50!black},
%    citecolor={blue!50!black},
%    urlcolor={blue!80!black}
%}

%% Pour pas avoir pleins d'indentations aux débuts des paragraphes
%% en fait cette option est comprise dans scrartcl
% \usepackage{parskip}

%\usepackage[status=draft,layout=inline]{fixme}
\usepackage[status=draft,layout=footnote]{fixme}
\fxsetup{theme=color}

%% Pour pouvoir taper de l'unicode dans les maths

%% Pour avoir assez d'espace entre les environnements?

%\begingroup
%    \makeatletter
%    \@for\theoremstyle:=definition,remark,plain\do{%
%        \expandafter\g@addto@macro\csname th@\theoremstyle\endcsname{%
%            \addtolength\thm@preskip\parskip
%            }%
%        }
%\endgroup
%

%% Pour pouvoir écrire des urls sympas avec \url{}
%\PassOptionsToPackage{hyphens}{url}\usepackage{hyperref}

%%%%%%%%%%%%%%%%%%%%%%%%%%%%%%%%%%%%%%%%%%%%%%%%%%%%%%%%%
%                                                       %
%                     RACCOURCIS                        %
%                                                       %
%%%%%%%%%%%%%%%%%%%%%%%%%%%%%%%%%%%%%%%%%%%%%%%%%%%%%%%%%

\newcommand{\its}{{thin-sum}}

\newcommand{\ovs}[2]{{\overset{#1}{#2}}}

%s/{\QQ}/{\\QQ}/ge 

\newcommand{\NN}{\mathbb{N}}

\newcommand{\ZZ}{\mathbb{Z}}

\newcommand{\RR}{\mathbb{R}}

\newcommand{\cB}{\mathcal{B}}

\newcommand{\cC}{\mathcal{C}}
\newcommand{\cS}{\mathcal{S}}
\newcommand{\cW}{\mathcal{W}}

\newcommand{\cJ}{\mathcal{J}}

\newcommand{\tf}{\tilde f}
\newcommand{\fe}{\mathfrak{e}}

\DeclareMathOperator{\im}{im}

\DeclareMathOperator{\supp}{supp}

\newtheorem{theorem}{Theorem}[section]
\newtheorem{corollary}[theorem]{Corollary}
\newtheorem{lemma}[theorem]{Lemma}
\newtheorem{proposition}[theorem]{Proposition}
\theoremstyle{definition}

\newtheorem{remark}[theorem]{Remark}
\theoremstyle{definition}
\newtheorem{definition}[theorem]{Definition}

\newcommand{\point}[1]{{\par{\textit{#1}}}}

\newcommand{\Huf}{H^{\mathrm{uf}}}
\newcommand{\Hsuf}{H^{\mathrm{suf}}}
\newcommand{\Buf}{B^{\mathrm{uf}}}
\newcommand{\Bsuf}{B^{\mathrm{suf}}}
\newcommand{\Zuf}{Z^{\mathrm{uf}}}
\newcommand{\Zsuf}{Z^{\mathrm{suf}}}
\newcommand{\Cuf}{C^{\mathrm{uf}}}
\newcommand{\Csuf}{C^{\mathrm{suf}}}

% On définit l'environement "steps" qu'on utilise pour faire des preuves "step by step"
%\newlist{steps}{enumerate}{3}
%\setlist[steps]{label=\arabic*}

%%%%%%%%%%%%%%%%%%%%%%%%%%%%%%%%%%%%%%%%%%%%%%%%%%%%%%%%%
%                                                       %
%                     DOCUMENT                          %
%                                                       %
%%%%%%%%%%%%%%%%%%%%%%%%%%%%%%%%%%%%%%%%%%%%%%%%%%%%%%%%%

\begin{document}

    \title{The first uniformly finite homology group with coefficients in ${\ZZ}$ and a characterisation of its vanishing in the transitive case}

    \author{R\'emi Bottinelli\footnote{Supported by the Swiss National Science Foundation project no. PP00P2-144681/1},  Tom Kaiser}

    \maketitle
    
    \begin{abstract}
        We study the first uniformly finite homology group of Block and Weinberger for uniformly locally finite graphs, with coefficients in $\ZZ$ and $\ZZ_2$.
        When the graph is a tree, or coefficients are in $\ZZ_2$, a characterisation of the group is obtained.
        In the general case, we describe three phenomena that entail non-vanishing of the group; their disjunction is shown to also be necessary for non-vanishing in the case of transitive graphs.
    \end{abstract}

    \tableofcontents

    \section{Introduction}   
    Uniformly finite homology, introduced in \cite{BW}, is a coarse invariant of well-behaved metric spaces.
    In~\cite{BW}, it is shown that vanishing of zeroth uniformly finite homology with coefficients in ${\ZZ}$ or ${\RR}$ is equivalent to expansion (non-amenability) of the space at hand.
    Higher uniformly finite homology groups don't enjoy, as far as we know, such clear cut descriptions.
    Partial results have been obtained, e.g.\ in~\cite{BD, DN}.
   
    In these notes, we focus on the first homology group, with coefficients in ${\ZZ}$ of (tame enough) graphs: $\Huf_1({\cdot},{\ZZ})$.
    Informally, three relatively independent phenomena are responsible for the appearance of homology classes in dimension one:
    \begin{enumerate}
        \item 
            Graph theoretical ends.
        \item
            The existence of circuits of arbitrarily large girth.
            More precisely, the non-existence of a uniform constant $R$ such that any circuit can be written as a (possibly infinite) sum of circuits of length ${\leq}R$. 
        \item
            A lack of $2$-simplices in the clique complex.
            This should be understood quantitatively, and likened to amenability.
    \end{enumerate}

    After recalling the necessary definitions in~\Cref{section:preliminaries}, we proceed in four parts.
   
    In~\Cref{section:trees}, we study the first of the above phenomena.
    Starting with a classification of the (first) homology of trees (\Cref{theorembips}) in terms of their ends, we then show that the homology of an arbitrary graph is bounded from below by that of an embedded tree that “realises the ends” (\Cref{tree_injection}).

    In~\Cref{section:Z2}, motivated by the observation that there is a surjection:
    \[
        \Huf_1({\cdot},{\ZZ}) \rightarrow \Huf_1({\cdot},{\ZZ}_2),
    \]
    we study $\Huf_1({\cdot},{\ZZ}_2)$ (note that the “uniformly finite” condition becomes vacuous in ${\ZZ}_2$ — this may sound like a bad omen, but turns out to clarify a lot of constructions).
    In particular, we get, in~\Cref{decomp_Z2},  a full description of $\Huf_1({\cdot},{\ZZ}_2)$ as a direct sum of two factors corresponding to exactly the first and second phenomena, that is, ends and “large circuits” respectively.
    Moreover, it is shown in~\Cref{thm_large_circuits_imply_inf_dim} that the existence of “large circuits” actually implies infinite-dimensionality of $\Huf_1({\cdot},{\ZZ}_2)$. 

    In~\Cref{section:ZZ_large_circuits}, again in $\ZZ$, we make precise the claim that “large circuits” imply non-zero homology.
    Since “large circuits” is a notion which depends on the coefficient rings, this result is \emph{not} a consequence of the previous section.

    In~\Cref{section:expansion}, inspired by a construction of~\cite{BS}, we define a notion of “expansion” in higher dimension, which aims at capturing the third phenomenon.
    It is easily verified (\Cref{expansion_implies_Huf_vanishes}) that our “expansion” implies vanishing of homology.
    As a (very partial) converse, we show (\Cref{transitive_graphs_expansion}) that in dimension one, and for vertex transitive graphs, vanishing implies “expansion”.

    Finally, refining the notion of expansion, we reach, for (infinite) transitive graphs (or more generally, infinite graphs with a cocompact action by their automorphism group), a characterisation of the non-vanishing of homology in terms of the three phenomena of (the refined) non-expansion, ends, and large circuits.

	%auto-ignore

\section{Preliminaries}\label{section:preliminaries}

\subsection{Introduction}

    We follow the conventions of~\cite{Mosher}.

    If $X$ is a simplicial complex, it is:
    \begin{description}
        \item[Uniformly Locally Finite] (ULF) if there is a uniform bound on the cardinalities of the links of its vertices.
        \item[Uniformly Contractible] (UC) if $\forall r>0\ \exists  s(r)>0$ such that any set $A\subseteq X$ of diameter $\leq r$ is contractible in its $s(r)$-neighbourhood.
    \end{description}

\subsection{Definition of $\Hsuf$}

    If $X$ is a ULF simplicial complex, and $A$ is either $\RR $, $\ZZ$ or $\ZZ_2$, define:
    \[
        \Csuf_n(X,A) = l^\infty (X_{(n)},A),
    \]
    where $X_{(n)}$ is the set of non-degenerate $n+1$-simplices of $X$.
    and the boundary map:
    \begin{alignat*}{10}
        \partial : \Csuf_{n+1}(X) &\rightarrow  \Csuf_{n}(X)\\
        f                 &\mapsto  [\partial f: \tau  \mapsto  \sum_{\tau <\sigma } i(\tau :\sigma )f(\sigma )],
    \end{alignat*}
    where $i(\cdot :\cdot )$ takes care of alternating signs, after choosing an orientation for $X$.

    Then, the \emph{simplicial uniformly finite homology} of $X$ is defined as
    \[
        \Hsuf_*(X) = H(\Csuf_*(X)).
    \]

\subsection{Definition of $\Huf$}

    If $X$ is a ULF graph and $r\in \NN$, define the \emph{Rips Complex} of radius $r$ as having $n$-simplices:
    \[
        (R_r(X))_{(n)} = \{(x_1,…x_n) \in  X_{(0)}^{n} \ :\ d(x_i,x_j) \leq  r\},
    \]
    where $d$ is the graph distance (i.e. length of a shortest path).
    A simplex of $R_r(x)$ is seen as a “virtual” simplex in $X$.
    Then $R_1(X) = X$, $R_r{X} \subseteq  R_{r'}(X)$ whenever $r\leq r'$, and $R_r(X)$ is itself a ULF simplicial complex.
    It follows that we have a directed system:
    \[
        \left(\Csuf_{\star }(R_r(X))\right)_{r\in \NN}
    \]
    and can define:
    \[
        \Cuf_{\star }(X) := \lim_r \Csuf_{\star }(R_r(X))
    \]
    and finally, the \emph{uniformly finite homology} of $X$ is defined as:
    \[
        \Huf_{\star }(X) = H(\Cuf_{\star }(X))
    \]
    
    As a remark, note that $\Huf_{*}(X)$ can also be defined as the limit of the system 
    \[
        (\Hsuf_*(R_r(X)))_r.
    \]

    In the following, we will use 
    \[
        \Zuf,\Buf,\Zsuf,\Bsuf,
    \]
    for the kernels and image of the boundary maps in the chain complexes $\Cuf$ and $\Csuf$ respectively.

    The following two important facts are proved in~\cite{Mosher}.

    \begin{proposition}[$\Huf$ is QI-invariant.]
        $\Huf$ is a quasi-isometry invariant. 
    \end{proposition}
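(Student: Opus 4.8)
The plan is the standard Rips-complex argument: one checks that a quasi-isometry induces a chain map between the directed systems $\bigl(\Csuf_\star(R_r(\cdot))\bigr)_r$ after a shift of the scale $r$, that this map preserves the uniformly finite (i.e.\ $l^\infty$) condition, and that it is invertible up to chain homotopy, so that it descends to an isomorphism on $\Huf_\star = H\bigl(\lim_r \Csuf_\star(R_r(\cdot))\bigr)$.

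First I would fix a quasi-isometry $f\colon X\to Y$ of ULF graphs with constants $(L,C)$ and coboundedness constant $D$, together with a coarse inverse $\bar f\colon Y\to X$ (itself a quasi-isometry) satisfying $d(\bar f f(x),x)\le D'$ and $d(f\bar f(y),y)\le D'$ for all $x,y$. Since $f$ distorts distances by at most $(L,C)$, it sends a tuple of vertices of $X$ pairwise at distance $\le r$ to a tuple of vertices of $Y$ pairwise at distance $\le Lr+C$; this makes $f$ a simplicial map $R_r(X)\to R_{Lr+C}(Y)$ (possibly collapsing simplices), compatible with the inclusions $R_r(X)\hookrightarrow R_{r'}(X)$. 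The induced push-forward of $l^\infty$-chains,
\[
  (f_\star\phi)(\tau)\ =\ \sum_{\substack{\sigma\in (R_r(X))_{(n)}\\ f(\sigma)=\tau}}\varepsilon(\sigma)\,\phi(\sigma),
\]
with $\varepsilon(\sigma)\in\{\pm1\}$ the sign of the vertex bijection $\sigma\to\tau$ induced by $f$, again lies in $l^\infty$: a quasi-isometry between ULF graphs is \emph{uniformly proper}, so the $f$-preimage of a vertex has diameter bounded in terms of $(L,C)$, and in a ULF graph a set of bounded diameter has boundedly many vertices; hence each $\tau$ has a uniformly bounded number of preimage simplices. The usual simplicial computation gives $f_\star\partial=\partial f_\star$, and compatibility with the scale inclusions (together with cofinality of the subsystem $\{R_{Lr+C}(Y)\}_r$ in $\{R_r(Y)\}_r$) lets $f_\star$ pass to the colimit, producing a chain map $\Cuf_\star(X)\to\Cuf_\star(Y)$ and hence $f_\star\colon\Huf_\star(X)\to\Huf_\star(Y)$.

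Next I would establish the key lemma: if $g,g'\colon X\to X'$ are quasi-isometries of ULF graphs with $\sup_x d(g(x),g'(x))<\infty$, then $g_\star=g'_\star$ on $\Huf_\star$. For this I would transpose the classical simplicial prism operator: an $n$-simplex $\sigma=(x_0,\dots,x_n)$ of $R_r(X)$ gives rise to the $(n{+}1)$-simplices $(g(x_0),\dots,g(x_i),g'(x_i),\dots,g'(x_n))$, $0\le i\le n$, all of diameter bounded in terms of $r$, the quasi-isometry constants, and $\sup_x d(g(x),g'(x))$, hence all lying in $R_{r'}(X')$ for a suitable $r'$; taking the associated push-forward-type operator $P$ on $l^\infty$-chains (bounded, once more by ULF-ness and uniform properness of $g$ and $g'$) and checking $\partial P+P\partial=g'_\star-g_\star$ exactly as for ordinary simplicial chains, then passing to the colimit, yields the lemma. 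Applying it to the pair $(\bar f f,\ \Id_X)$ gives $\bar f_\star f_\star=\Id$ on $\Huf_\star(X)$, and symmetrically — using that $f\bar f$ is within bounded distance of $\Id_Y$ by coboundedness of $f$ — $f_\star\bar f_\star=\Id$ on $\Huf_\star(Y)$; hence $f_\star$ is an isomorphism.

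I expect the only genuine difficulties to be bookkeeping — every chain map and every homotopy above only exists after enlarging the scale $r$, so one must be careful that these remain compatible with the directed colimit (they are, by cofinality) — and the repeated verification that push-forward and the prism operator preserve the $l^\infty$ condition, which is precisely where uniform local finiteness is indispensable, as it bounds the number of simplices in any bounded region and thereby all the relevant fibers.
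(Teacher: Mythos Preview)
Your proposal is correct and is precisely the standard Rips-complex/prism-operator argument. The paper does not actually prove this statement itself but only cites \cite[Step 3 in the proof of Theorem 12]{Mosher}; your sketch is essentially what one finds there, so there is no difference in approach to discuss.
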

    \begin{proof}
        See~\cite[Step 3 in the proof of Theorem 12]{Mosher}.
    \end{proof}
    
    \begin{proposition}
        If $X$ is ULF, UC, then $\Hsuf(X) \cong  \Huf(X)$.
    \end{proposition}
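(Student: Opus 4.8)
The plan is to realise the comparison map $\Hsuf(X) \to \Huf(X)$ by explicit chain maps, using uniform contractibility to build chain retractions of the Rips complexes onto $X$ and using uniform local finiteness to keep every chain in $l^\infty$. Write $\iota_r \colon X = R_1(X) \hookrightarrow R_r(X)$ and $\iota_{r,r'}\colon R_r(X) \hookrightarrow R_{r'}(X)$ for the subcomplex inclusions; recall that $\Huf_\ast(X) = \lim_r \Hsuf_\ast(R_r(X))$ and that the map to be shown an isomorphism is the canonical map $\iota_\ast\colon \Hsuf_\ast(X) \to \Huf_\ast(X)$ into this direct limit.

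\textit{Step 1.} For each $r$, I would construct a chain map $\phi_r \colon \Csuf_\ast(R_r(X)) \to \Csuf_\ast(X)$ with $\phi_r \circ \iota_r = \Id$, by induction on dimension. Put $\phi_r = \Id$ on simplices of $X$; send a virtual edge $(x_0,x_1)$ (so $d(x_0,x_1)\le r$) to a fixed geodesic in $X$; and for a virtual $n$-simplex $\sigma = (x_0,\dots,x_n)$ observe that $\phi_r(\partial\sigma)$ is already a cycle (by the chain-map relation on lower skeleta), supported within a controlled neighbourhood of the set $\{x_0,\dots,x_n\}$, which has diameter $\le r$; so uniform contractibility lets us choose a filling $\phi_r(\sigma)$ supported in an $s(\cdot)$-neighbourhood of it. Here ULF enters twice: it bounds the number of simplices of $X$ in a bounded neighbourhood of a given simplex (and, being ULF, $X$ has uniformly bounded dimension, so these neighbourhoods stay bounded across all dimensions), and it bounds the size of the finite subcomplexes in which we fill, so the fillings may be chosen with uniformly bounded coefficients. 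Consequently $\phi_r$ preserves $l^\infty$ and has propagation $\le D(r)$ for some $D(r)$.

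\textit{Step 2.} Next I would show that for $r' := r + 2D(r)$ the chain maps $\iota_{r,r'}$ and $\iota_{r'}\circ\phi_r$ from $\Csuf_\ast(R_r(X))$ to $\Csuf_\ast(R_{r'}(X))$ are chain homotopic by a homotopy $H_r$ of bounded propagation. Both maps fix vertices and have propagation $\le D(r)$, so for each simplex $\sigma$ of $R_r(X)$ the chains $\phi_r(\sigma)$ and $\sigma$ both lie in the full simplex $C(\sigma)$ of $R_{r'}(X)$ spanned by the $D(r)$-neighbourhood of the vertices of $\sigma$ (any two such vertices are within $r + 2D(r) = r'$, so this is indeed a simplex). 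The assignment $\sigma \mapsto C(\sigma)$ is an acyclic carrier (full simplices are contractible), carries both maps, and — by ULF — has uniformly bounded values; the method of acyclic carriers then yields $H_r$, built so as to respect the carrier, hence of bounded propagation and $l^\infty$-preserving. In degree $0$ one starts from $H_r = 0$ on vertices, using that $\iota_{r'}\phi_r - \iota_{r,r'}$ kills augmentation since the two maps agree on vertices.

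Granting Steps 1–2 the proposition is formal. For surjectivity, a class in $\Huf_n(X)$ is represented by some $z \in \Zsuf_n(R_r(X))$; then $\phi_r(z) \in \Zsuf_n(X)$ and $\iota_{r'}\phi_r(z) - \iota_{r,r'}(z) = \partial H_r(z)$, so $\iota_\ast[\phi_r(z)] = [z]$ in $\Huf_n(X)$. For injectivity, if $z \in \Zsuf_n(X)$ and $\iota_\ast[z] = 0$, then $\iota_r(z) = \partial w$ in $\Csuf_{n+1}(R_r(X))$ for some $r$, whence $z = \phi_r\iota_r(z) = \partial\phi_r(w)$ is a boundary in $\Csuf_{n+1}(X)$. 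The main obstacle is not this homological algebra but the uniformity of the estimates in Steps 1–2: one must verify that uniform contractibility produces fillings in neighbourhoods of controlled size and that ULF then controls both the count of relevant simplices and the size of the needed coefficients, so that $\phi_r$ and $H_r$ really land in $l^\infty$ rather than merely in $\Csuf$. That bookkeeping is the crux; the rest follows the pattern above (cf.\ \cite{Mosher}).
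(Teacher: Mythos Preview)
Your argument is correct and follows the standard acyclic-carrier / controlled-filling approach; the paper itself gives no proof beyond the citation to \cite[Step~2 of Theorem~12]{Mosher}, where essentially this argument appears. The bookkeeping you flag --- that ULF bounds the number of simplices in the relevant neighbourhoods (and hence the isomorphism types of the finite subcomplexes in which one fills), so that $\phi_r$ and $H_r$ land in $l^\infty$ --- is exactly the content of that reference, and your sketch already isolates it as the crux.
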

    \begin{proof}
        See~\cite[Step 2 in the proof of Theorem 12]{Mosher}.
    \end{proof}

\subsection{“Hands-on” definition of $\Huf_1$}

    The definition of uniformly finite homology is rather abstract; the goal in this section is to find more concrete description of $\Huf_1$.

    Let $X$ be a ULF graph, and $A$ either $\ZZ $ or ${\ZZ}_2$.

    An element of $\Zsuf_1(X,A)$ can be seen as a (uniformly bounded) \emph{flow} on $X$; that is, each (directed) edge takes a certain value, which we view as flowing through the edge.
    The condition of being a homological cycle states that at any vertex, the sum of flows of edges directed to the vertex is the same as the flow of edges directed out of the vertex (“in=out”). 
    The two building blocks for such flows are circuits and bi-infinite paths (which we will call “bips”):
    Indeed, any oriented, graph theoretical circuit in $X$ defines an element of $\Zsuf_1(X,A)$ by taking the sum of its (oriented) edges.
    Similarly, a bi-infinite path (that is, morphism of graphs from $\ZZ$ to $X$, injective on edges for convenience) also defines an element of $\Zsuf_1(X,A)$.
    Now, for $A$ either $\ZZ $ or ${\ZZ}_2$, any element of $f \in  \Zsuf_1(X,A)$ can be decomposed into a sum of circuits and bips by “following paths”, in such a way that there is a uniform bound on the number of circuits/bips passing over any given edge (Take a maximal multiset of circuits “below” f and subtract it; Take a maximal multiset of bips “below” the result and subtract it; Nothing remains). 
    From now on, we will not distinguish circuits and bips as paths in $X$ from their representatives in homology.

    If a (possibly infinite) family $(c_i)_{i\in I}$ of circuits is given, along with coefficients $({\lambda}_i\in A)_{i\in I}$, then we say they define a \emph{thin} sum if the family of circuits is locally finite (see \cite{BG} for a motivation of “thinness”).

    We can then define the following objects:
    \begin{alignat*}{10}
        \cC_r(X,A) &:= \text{``sums of circuits of length $\leq r$ with uniformly bounded coefficients in $A$''}\\
        \cC_\infty (X,A) &:= \cup_r \cC_r(X,A)\\
        \cC(X,A) &:= \text{``thin sums of circuits with coefficients in $A$ such that the result is uniformly bounded''}
    \end{alignat*}

    It is easy to see that all of those objects are actually vector subspaces of $\Zsuf_1(X,A)$.
    Note that in the case $A=\ZZ_2$, $\cC_r$ can be described simply as the space of sums of circuits of length $\leq r$, and $\cC$ as the space of thin sums of circuits.
    % TOM AGREES?
    \newcommand{\fC}{\mathfrak{C}}
    Note finally that, letting $\fC_r$ stand for the set of circuits of length $\leq r$, and
    \[
        \fe :l^\infty (\fC_r) \rightarrow  l^\infty (EX) = \Csuf_1(X),
    \]
    extended linearly from $c \mapsto  \text{“its edges”}$, we have $\cC_r = \fe \left(l^\infty (\fC_r)\right)$, and $\fe $ is continuous w.r.t. the pointwise topologies.

    We start with a description of $\cC_\infty $ in terms of our chain complexes.

    \begin{lemma}\label{lemma:CZB}
        If $X$ is a ULF graph, then, for any $r>0$:
        \begin{alignat*}{10}
            \cC_r(X,A) &\subseteq  \Zsuf_1(X,A) \cap  \Bsuf_1(R_{r/2+1} X,A)\\
            \Zsuf_1(X,A) \cap  \Bsuf_1(R_r X,A) &\subseteq  \cC_{3r}(X,A).
        \end{alignat*}
        Consequently:
        \[
            \cC_\infty (X,A) = \Zsuf_1(X,A) \cap  \Buf_1(X,A).
        \]
    \end{lemma}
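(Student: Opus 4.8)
The plan is to prove the two displayed inclusions and then read off the final identity formally. For the latter, recall that each $\cC_r(X,A)$ consists of uniformly bounded cycles (as noted above, $\cC_r \subseteq \Zsuf_1$), and that both $\Buf_1(X,A)=\bigcup_r \Bsuf_1(R_rX,A)$ (because $\Cuf_\ast(X)=\bigcup_r \Csuf_\ast(R_rX)$ and $\partial$ respects the colimit) and $\cC_\infty(X,A)=\bigcup_r \cC_r(X,A)$ are increasing unions; so the first inclusion yields $\cC_\infty\subseteq\Zsuf_1\cap\Buf_1$, the second yields $\Zsuf_1\cap\Buf_1\subseteq\cC_\infty$, and the two together give equality. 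It remains to do the inclusions.

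\textbf{First inclusion.} I would fill one circuit at a time. Given a circuit $c=(x_0,x_1,\dots,x_k=x_0)$ of length $k\le r$, fix the basepoint $x_0$ and form the ``fan'' $2$-chain $F(c)=\sum_{i=1}^{k-1}[x_0,x_i,x_{i+1}]$; a telescoping computation gives $\partial F(c)=\fe(c)$. Along $c$ any two vertices are joined by an arc of length $\le k/2\le r/2$, and consecutive vertices are adjacent, so every triangle of $F(c)$ is a genuine simplex of $R_{r/2+1}X$. For a general $\fe(g)\in\cC_r(X,A)$ with $g\in l^\infty(\fC_r)$, set $G=\sum_{c}g(c)F(c)$: this sum is locally finite, since a fixed triangle $[a,b,d]$ occurs in $F(c)$ only when $c$ traverses the edge $[b,d]$, and ULF bounds the number of length-$\le r$ circuits through a fixed edge. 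Hence $G\in\Csuf_2(R_{r/2+1}X,A)$ is uniformly bounded with $\partial G=\fe(g)$ by continuity of $\partial$, so $\fe(g)\in\Zsuf_1(X,A)\cap\Bsuf_1(R_{r/2+1}X,A)$.

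\textbf{Second inclusion.} Fix once and for all, for every ordered pair $(a,b)$ with $d(a,b)\le r$, a shortest-path $1$-chain $\gamma_{ab}$, chosen so that $\gamma_{ba}=-\gamma_{ab}$ and $\gamma_{ab}=[a,b]$ when $a,b$ are adjacent; let $\psi:\Csuf_1(R_rX,A)\to\Csuf_1(X,A)$ be the map sending each (actual or virtual) edge $e$ to $\gamma_e$, extended by linearity and continuity (coordinatewise). ULF makes $\psi$ well-defined and bounded — each actual edge lies on $\gamma_{ab}$ for only boundedly many admissible $(a,b)$ — and lets $\psi$ commute with the locally finite sums that occur. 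Now take $f=\partial g\in\Zsuf_1(X,A)\cap\Bsuf_1(R_rX,A)$ with $g=\sum_T g(T)T$. On the one hand $\psi(f)=f$, since $f$ is supported on actual edges; on the other hand, for a virtual triangle $T=(a,b,c)$ we have $\psi(\partial T)=\gamma_{bc}-\gamma_{ac}+\gamma_{ab}=\gamma_{ab}+\gamma_{bc}+\gamma_{ca}=:w_T$, the $1$-chain of a closed walk of length $\le 3r$, hence $w_T=\fe(h_T)$ for a finitely supported $h_T\in l^\infty(\fC_{3r})$ with $\|h_T\|_\infty\le 3r$. Combining,
\[
    f=\psi(f)=\psi(\partial g)=\sum_T g(T)\,w_T=\fe\Big(\sum_T g(T)h_T\Big),
\]
and $h:=\sum_T g(T)h_T$ lies in $l^\infty(\fC_{3r})$ because, for a fixed circuit $c$ of length $\le 3r$, only boundedly many $T$ (those whose vertices lie within distance $r$ of $c$) satisfy $h_T(c)\neq 0$. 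Thus $f=\fe(h)\in\cC_{3r}(X,A)$.

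\textbf{Main obstacle.} The delicate part is the second inclusion, and within it the combined bookkeeping rather than a single idea: one must realise the ``geodesic replacement'' $\psi$ as a genuine continuous operator on the (generally infinite) $2$-chains witnessing membership in $\Bsuf_1(R_rX,A)$, and then verify that its output is not merely a thin sum of arbitrary circuits but an honest element of $\cC_{3r}$, i.e.\ that the induced circuit coefficients stay uniformly bounded. Both requirements are exactly where uniform local finiteness is used — bounding, respectively, the number of short geodesics through a fixed edge and the number of virtual triangles near a fixed short circuit — so ULF is essential to the argument, not just a hypothesis ensuring the objects are defined.
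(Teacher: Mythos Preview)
Your proof is correct and follows essentially the same route as the paper: the first inclusion is the fan-of-triangles filling (your $F(c)$ is exactly the paper's $\Delta c$), and the second inclusion is the geodesic-replacement of virtual edges in a witnessing $2$-chain, turning each virtual triangle into a circuit of length $\le 3r$ (your $w_T$ is the paper's $Ot$), with ULF controlling the resulting coefficients. The only cosmetic difference is that you package the replacement as a continuous operator $\psi$ and comment explicitly on where ULF enters, whereas the paper just writes down $Og=\sum_t g(t)\cdot(Ot)$ and asserts $Og\in l^\infty(\fC_{3r})$ ``by ULF''; your version is a bit more explicit but not genuinely different.
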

    \begin{proof}
        Drop the $A$s.

        If $f \in  \cC_r(X)$, $f = \fe \phi $ for some $\phi :\fC_r \rightarrow  A$ uniformly bounded.
        Obviously, $f\in \Zsuf_1(X)$.
        For any $c\in  \supp \phi $, one can “triangulate” $c$: that is, if $c$ is the circuit$(v_1,…,v_s)$ with $s\leq r$, we can consider the sum 
        \[
            \Delta c := \sum_{i=1}^s (v_1,v_i,v_{i+1}) \in  \Csuf_2(R_{2r+1}X),
        \]
        with addition $\mod s$.
        One sees that $\partial (\Delta c) = \fe c$, and letting
        \[
            \Delta \phi  := \sum_{c} g(c)\cdot (\Delta c),
        \]
        we get $\Delta \phi  \in  \Csuf_2(R_{r/2+1}X)$ by ULF, and $\partial (\Delta \phi ) = \fe \phi  = f$.
        Thus, $f\in \Bsuf_2(R_{r/2+1}X)$.

        Conversely, assume $f \in  \Zsuf_1(X) \cap  \Bsuf_1(R_r X)$.
        Let $g\in \Csuf_2(R_r X)$ such that $\partial g = f$.

        For any two vertices $u,v$ at distance $\leq r$, fix a shortest path $p_{u,v} = (u=u_0,u_1,…u_s=v)$ from $u$ to $v$.
        If $t := (u,v,w)$ is a triangle in $R_r X$, consider the circuit $Ot$ obtained by concatenating the paths $p_{u,v},p_{v,w},p_{w,u}$.
        By construction, $Ot \in  \fC_{3r}$.
        Let also 
        \[
            Og := \sum_{t \in  (R_r X)_{(3)}} g(t)\cdot (Ot)
        \]
        Then $Og \in  l^\infty (\fC_{3r})$, again by ULF, and by our assumption that $\partial g \in  \Zsuf_1(X)$, we get:
        \[
            \fe (Og) = \partial g.
        \]
        It follows that $f =\fe (Og) \in  \cC_{3r}$.

        The last equality of the lemma follows simply by taking the directed unions.

    \end{proof}

    We can now describe $\Huf_1(X)$ in a way that only deals with circuits and paths, and does not deal with Rips complexes:
    \begin{proposition}\label{H1uf_rewritten}
        If $X$ is a ULF graph, then, for any $r>0$, the composite homomorphism:
        \[
            {\Phi}_r : \Zsuf_1(X,A) \hookrightarrow  \Zsuf_1(R_r X,A) \twoheadrightarrow   \Hsuf_1(R_r X,A)
        \]
        is surjective and its kernel satisfies:
        \[
            \cC_{2r} \leq  \ker {\Phi}_r \leq  \cC_{3r}.
        \]
        
        Consequently, the composite homomorphism
        \[
            {\Phi}_\infty  : \Zsuf_1(X,A) \hookrightarrow  \Zuf_1(X,A) \twoheadrightarrow   \Huf_1(X,A)
        \]
        is surjective and has kernel 
        \[
            \ker {\Phi}_\infty  = \cC_\infty (X,A).
        \]
        In particular:
        \[
            \Huf_1(X,A) \cong  \frac{\Zsuf_1(X,A)}{\cC_\infty (X,A)}.
        \]
    \end{proposition}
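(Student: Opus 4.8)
The plan is to construct, for each $r$, an explicit ``straightening'' of $\Phi_r$, read off the kernel bounds from \Cref{lemma:CZB}, and then pass to the directed colimit over $r$. For surjectivity of $\Phi_r$ I would first fix, for every ordered pair of vertices $(u,v)$ with $d(u,v)\le r$, a geodesic $p_{u,v}$ in $X$ from $u$ to $v$, chosen so that $p_{v,u}$ is the reverse of $p_{u,v}$ and $p_{u,v}=(u,v)$ whenever $(u,v)$ is an edge of $X$. Given a cycle $z\in\Zsuf_1(R_rX,A)$ I would ``push the flow of each virtual edge along its geodesic'', i.e.\ set $z':=\sum_{d(u,v)\le r}z(u,v)\cdot p_{u,v}$, each $p_{u,v}$ read as the sum of its oriented edges. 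There are two things to verify: that $z'\in\Zsuf_1(X,A)$ --- uniform boundedness holds because, by ULF, a fixed edge lies on $p_{u,v}$ only for the boundedly many pairs with $u,v$ in an $r$-ball around one of its endpoints, and the ``in$=$out'' condition holds because the internal vertices of each $p_{u,v}$ contribute equally to in- and out-flow while its two endpoints reproduce exactly the contribution of the virtual edge $(u,v)$, using that $z$ is a cycle; and that $z-z'\in\Bsuf_1(R_rX,A)$ --- for a single virtual edge, the fan triangulation, from the initial vertex, of the circuit ``$p_{u,v}$ followed by $(v,u)$'' is a $2$-chain of $R_rX$ (all its vertices lie within $d(u,v)\le r$ of that vertex) whose boundary is $p_{u,v}-(u,v)$, and weighting these by $z(u,v)$ and summing gives, again by ULF, a uniformly bounded $2$-chain of $R_rX$ with boundary $z'-z$. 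Then $\Phi_r(z')=[z]$, and as $z$ ranges over representatives of classes in $\Hsuf_1(R_rX,A)$ this proves surjectivity.

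For the kernel I would use that, by definition, $\ker\Phi_r=\Zsuf_1(X,A)\cap\Bsuf_1(R_rX,A)$. The upper bound $\ker\Phi_r\le\cC_{3r}$ is then exactly the second inclusion of \Cref{lemma:CZB}. For the lower bound I would re-run the triangulation argument of that lemma with a sharper radius estimate: a circuit $c=(v_1,\dots,v_s)$ with $s\le 2r$ satisfies $d(v_1,v_j)\le s/2\le r$ for every $j$, so its fan $\sum_j(v_1,v_j,v_{j+1})$ already lies in $\Csuf_2(R_rX,A)$ and has boundary $\fe c$; summing the fans over a thin, uniformly bounded family of circuits of length $\le 2r$ produces a uniformly bounded $2$-chain of $R_rX$ by ULF, so $\cC_{2r}\subseteq\Bsuf_1(R_rX,A)$ and hence $\cC_{2r}\le\ker\Phi_r$.

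Finally I would pass to the colimit. Since $\Cuf_\star(X)=\lim_r\Csuf_\star(R_rX)$ is a directed colimit, we get $\Huf_1(X,A)=\lim_r\Hsuf_1(R_rX,A)$ and $\Buf_1(X,A)=\bigcup_r\Bsuf_1(R_rX,A)$, and $\Phi_\infty$ is, for each $r$, the composite of $\Phi_r$ with the structure map $\Hsuf_1(R_rX,A)\to\Huf_1(X,A)$; surjectivity of all the $\Phi_r$ therefore forces surjectivity of $\Phi_\infty$. An element $z\in\Zsuf_1(X,A)$ maps to $0$ in the colimit $\Huf_1(X,A)$ iff it is already a boundary at some finite stage, i.e.\ $z\in\Bsuf_1(R_rX,A)$ for some $r$, i.e.\ $z\in\ker\Phi_r$ for some $r$; hence $\ker\Phi_\infty=\bigcup_r\ker\Phi_r$, which is squeezed between $\bigcup_r\cC_{2r}$ and $\bigcup_r\cC_{3r}$, both equal to $\cC_\infty(X,A)$. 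The displayed isomorphism is then the first isomorphism theorem applied to $\Phi_\infty$.

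I expect the only genuine obstacle to be the surjectivity of $\Phi_r$: everything after \Cref{lemma:CZB} is formal colimit bookkeeping, but verifying that the straightened chain $z'$ is still a \emph{uniformly bounded} cycle --- the ULF count together with the ``in$=$out'' bookkeeping --- and that the correcting $2$-chain stays inside $R_rX$ rather than in a larger Rips complex (which would weaken the kernel bounds) is where the care is needed.
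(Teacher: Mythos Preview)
Your proposal is correct and follows essentially the same route as the paper: the ``straightening'' $z'$ via fixed geodesics and the fan triangulation showing $z-z'\in\Bsuf_1(R_rX)$ match the paper's surjectivity argument verbatim, and your kernel and colimit steps likewise coincide with the paper's use of \Cref{lemma:CZB} and the directed-union description of $\Huf_1$. The only cosmetic difference is that you spell out the sharper radius estimate $d(v_1,v_j)\le s/2\le r$ for the lower bound $\cC_{2r}\le\ker\Phi_r$ explicitly, whereas the paper just points back to the lemma.
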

    \begin{proof}
        \renewcommand{\tf}{\tilde f}
        Drop the $A$s.
        
        Fix $r>0$.
        We first verify surjectivity of ${\Phi}_r$.
        Fix any $f\in \Zsuf_1(R_rX)$.
        Choose for any two $u,v$ in $X$, a unique shortest path $p_{u,v} = (u=u_0,u_1,…,u_s=v)$ from $u$ to $v$, and let:
        \[
            \tf := \sum_{(u,v) \in  (R_rX)_{(2)}} f(u,v) \cdot  p_{u,v},
        \]
        that is, $\tf$ is obtained by replacing “virtual edges” (i.e. edges in $(R_rX)_{(2)}$) by paths.
        By ULF, $\tf \in  \Csuf_1(X)$ and $\partial \tf = \partial f$, so that $\partial \tf=0$, hence $\tf \in  \Zsuf_1(X)$.
        We claim that $\tf - f \in  \Bsuf_1(R_rX)$, which will clearly imply surjectivity of ${\Phi}_r$.
        Indeed, if for any $(u,v)$ at distance $\leq r$, one considers:
        \[
            \Delta (u,v) := \sum_{i=1}^{s-1} (u_0,u_i,u_{i+1}) \in  \Csuf_2(R_rX) 
        \]
        where $(u=u_0,u_1,…,u_s=v)$ is a path from $u$ to $v$, then $\partial \Delta (u,v) = (u,v) - p_{u,v}$.
        Summing over the virtual edges $(u,v)$ in the support of $f$, we get:
        \[
            \partial \left( \sum_{(u,v) \in  (R_rX)_{(1)}} f(u,v)\cdot \Delta (u,v)  \right) = f - \tf.
        \]
        This shows that, indeed, ${\Phi}_r$ is surjective, assuming the sum in the parentheses is in $\Csuf_2(R_rX)$ which holds, as usual, by ULF.
        Now, the inclusions related to $\ker {\Phi}_r$ remain, but these follow at once from the first part of \Cref{lemma:CZB}.

        The properties of ${\Phi}_\infty $ follow from the second part of \Cref{lemma:CZB} and the fact that:
        \[
            \Huf_1(X) = \frac{\bigcup_r \Zsuf_1(R_rX)}{\bigcup_r \Bsuf_1(R_rX)}.
        \]

    \end{proof}

    For later use, we note also that:
    
    \begin{corollary}\label{hsuf_rewritten}
        Let $X$ a ULF graph and assume that $\cC_r(X,A) = \cC_\infty (X,A)$ for some $r>0$.
        Then, for any $s$ large enough, the map ${\Phi}_s$ descends to an isomorphism:
        \[
            \tilde{\Phi }_s :  \Zsuf_1(X,A)/\cC_r(X,A) \rightarrow  \Hsuf_1(R_sX,A),
        \]
        and so does ${\Phi}_\infty $:
        \[
            \tilde{\Phi }_\infty  :  \Zsuf_1(X,A)/\cC_r(X,A) \rightarrow  \Huf_1(X,A).
        \]
        Furthermore, the following diagram (of isomorphisms) commutes:
        \[\begin{tikzcd}
                                                    & \frac{\Zsuf_1(X,A)}{\cC_r(X,A)} \ar[dl,"{\tilde{\Phi }_s}"] \ar[dr,"{\tilde{\Phi }_\infty }"]    \\ 
            \Hsuf_1(R_sX,A)  \ar[rr,"{[\iota ]}"]     &                                                   &\Huf_1(X,A) 
        \end{tikzcd}\]
    \end{corollary}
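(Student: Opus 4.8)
The plan is to derive all three claims directly from \Cref{H1uf_rewritten}, the only genuine input being the behaviour of the family $(\cC_s)_s$ under the standing hypothesis. First I would record that $\cC_s(X,A) \subseteq \cC_{s'}(X,A)$ whenever $s \le s'$ — a circuit of length $\le s$ is one of length $\le s'$ — and that $\cC_\infty(X,A) = \bigcup_s \cC_s(X,A)$ by definition. Hence the hypothesis $\cC_r(X,A) = \cC_\infty(X,A)$ sandwiches $\cC_r \subseteq \cC_s \subseteq \cC_\infty = \cC_r$ for every $s \ge r$, so $\cC_s(X,A) = \cC_\infty(X,A) = \cC_r(X,A)$ for all such $s$; in particular $\cC_{2s}(X,A) = \cC_{3s}(X,A) = \cC_r(X,A)$ for all $s \ge r$. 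This is the content of ``$s$ large enough''.

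Next I would treat $\tilde{\Phi}_s$. By \Cref{H1uf_rewritten} the map $\Phi_s$ is surjective with $\cC_{2s} \le \ker\Phi_s \le \cC_{3s}$; for $s \ge r$ the two bounds coincide with $\cC_r(X,A)$, so $\ker\Phi_s = \cC_r(X,A)$ exactly, and $\Phi_s$ descends to an isomorphism $\tilde{\Phi}_s : \Zsuf_1(X,A)/\cC_r(X,A) \to \Hsuf_1(R_sX,A)$. For $\tilde{\Phi}_\infty$ there is nothing to choose: \Cref{H1uf_rewritten} already gives that $\Phi_\infty$ is surjective with $\ker\Phi_\infty = \cC_\infty(X,A)$, which equals $\cC_r(X,A)$ by hypothesis, so $\Phi_\infty$ descends to an isomorphism $\tilde{\Phi}_\infty : \Zsuf_1(X,A)/\cC_r(X,A) \to \Huf_1(X,A)$.

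Finally, for the commuting triangle I would unwind the definitions of $\Phi_s$ and $\Phi_\infty$ from \Cref{H1uf_rewritten}: both are the composite of the inclusion $\Zsuf_1(X,A) \hookrightarrow \Zsuf_1(R_sX,A)$, resp. $\Zsuf_1(X,A) \hookrightarrow \Zuf_1(X,A)$, followed by the quotient onto homology. Since the inclusion $\Zsuf_1(R_sX,A) \hookrightarrow \Zuf_1(X,A)$ into the directed union is compatible with these two inclusions, and since $[\iota]$ is by definition the homomorphism it induces on $H_1$, one gets $[\iota]\circ\Phi_s = \Phi_\infty$ already at the level of $\Zsuf_1(X,A)$; passing to the quotient by $\cC_r(X,A)$ yields $[\iota]\circ\tilde{\Phi}_s = \tilde{\Phi}_\infty$, so the triangle commutes. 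Because $\tilde{\Phi}_s$ and $\tilde{\Phi}_\infty$ are isomorphisms, so is $[\iota]$, and the diagram is a diagram of isomorphisms. I do not expect any real obstacle here: the content is entirely in \Cref{H1uf_rewritten}, and the only points requiring care are the stabilisation of the chain $(\cC_s)_s$ and the bookkeeping that turns the two-sided bound on $\ker\Phi_s$ into an equality once $s \ge r$.
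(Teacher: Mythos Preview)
Your proof is correct and follows essentially the same approach as the paper's: use the stabilisation $\cC_s = \cC_r$ for $s \ge r$ to turn the kernel bounds from \Cref{H1uf_rewritten} into equalities, then note that commutativity is immediate since all maps are (quotients of) inclusions. Your version is simply a more detailed expansion of the paper's terse argument.
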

    \begin{proof}
        Drop the $A$s.
        Consider the maps ${\Phi}_s,{\Phi}_\infty $ of~\Cref{H1uf_rewritten}.
        Since for all $s\geq r$, $\cC_\infty (X) = \cC_s(X) = \cC_r(X)$, the equations involving the respective kernels give us the desired isomorphisms.
        Commutativity of the triangles follows, since all maps involved are (appropriate quotients of) inclusions.
    \end{proof}

	%auto-ignore

\section{Trees and End-Defining Trees}\label{section:trees}

Expanding a result of~\cite{Dianathesis}, we describe the first homology of any (ULF) tree $T$ as $l^\infty (B)$, for some set $B$ of bips in $T$, of cardinality the number of ends of $t$ (minus $1$ in the finite case).

We will then show that the first homology of a sufficiently well-behaved subtree of a graph $X$ injects into the first homology of $X$.

From now on, by a trivalent tree, we mean a tree with vertices of degree either $2$ or $3$.
We first verify that in terms of uniformly finite homology, leaves (that is, vertices of degree $1$) can be safely forgotten. 

\begin{lemma}
    Take $A= \mathbb{Z},\mathbb{Z}_2$; then $\Hsuf_1(T,A) = \Hsuf_1(T',A)$ where $T'$ is the maximal subtree of $T$ without leaves (i.e. vertices of degree $1$). 
\end{lemma}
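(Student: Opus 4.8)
The plan is to construct a deformation retraction of $T$ onto $T'$ that is compatible with the chain complexes defining $\Hsuf_1$, and to check that it induces an isomorphism on $H_1$. Concretely, $T'$ is obtained from $T$ by iteratively deleting leaves; the vertices and edges of $T \setminus T'$ are organised into ``hairs'', i.e.\ maximal subtrees all of whose vertices have valence $\leq 2$ in $T$ once we stop at a vertex lying in $T'$ (equivalently, each connected component of $T\setminus T'$ together with its unique attaching vertex in $T'$). Since $T$ is ULF, each such hair has uniformly bounded valence, but a priori a hair need \emph{not} be of uniformly bounded diameter, so one must be a little careful: however, a hair is a tree with at most one vertex of valence $\geq 3$ and no leaves other than, possibly, that attaching vertex's ``far'' ends — actually in a tree without further branching a hair is just a finite or half-infinite path, or more generally a subtree, hanging off a single point of $T'$.

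First I would set up the retraction $\rho\colon T\to T'$ sending every vertex of a hair to its unique attaching vertex in $T'$, and on chains the induced map $\rho_\#\colon \Csuf_1(T)\to\Csuf_1(T')$ collapsing every edge inside a hair or on the boundary of a hair to $0$ (an edge of $T'$ maps to itself). This is well-defined on $l^\infty$ because it only ever relabels or kills edges, so it is bounded of norm $1$; and it is a chain map since $\rho$ is simplicial. In the other direction the inclusion $\iota\colon T'\hookrightarrow T$ gives $\iota_\#$, and $\rho_\#\iota_\# = \mathrm{id}$ on $\Csuf_1(T')$. So it remains to show $\iota_\#\rho_\#$ is chain-homotopic to the identity on $\Csuf_1(T)$ via a \emph{bounded} homotopy $h\colon \Csuf_1(T)\to\Csuf_2(T)$ — but in a tree there are no $2$-simplices, so $\Csuf_2(T)=0$ and we cannot use a chain homotopy; instead one argues directly on cycles. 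The cleanest route: by~\Cref{H1uf_rewritten} and~\Cref{lemma:CZB}, $\Hsuf_1(T,A)\cong \Zsuf_1(T,A)/\cC_\infty(T,A)$, and since $T$ is a tree it has no circuits, so $\cC_\infty(T,A)=0$ and $\Hsuf_1(T,A)\cong\Zsuf_1(T,A)$, and likewise $\Hsuf_1(T',A)\cong\Zsuf_1(T',A)$. Thus it suffices to show $\rho_\#$ restricts to an isomorphism $\Zsuf_1(T,A)\xrightarrow{\sim}\Zsuf_1(T',A)$ with inverse $\iota_\#$.

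The key step is then the following: if $f\in\Zsuf_1(T,A)$ is a cycle (``in${}={}$out'' at every vertex), then $f$ vanishes on every edge that lies in a hair. Indeed, a hair is a finite or infinite subtree attached to $T'$ at a single vertex $p$; any edge $e$ inside a hair, when removed, disconnects $T$ into two pieces, the ``outer'' piece $H_e$ being entirely contained in the hair and hence in $T\setminus(T'\cup\{p\})$. Summing the cycle condition over all vertices of $H_e$, all internal flows cancel and one gets that the total flow across $e$ is $0$, i.e.\ $f(e)=0$ — the standard ``a cycle in a tree is zero'' argument, applied locally. (For half-infinite hairs, exhaust by finite truncations and use that $f$ is supported, at each finite stage, on finitely many edges near $e$; boundedness of $f$ is not even needed here, only the cycle condition.) This gives $\iota_\#\rho_\# f = f$, so $\iota_\#$ and $\rho_\#$ are mutually inverse on cycles, hence on $\Hsuf_1$. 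I would remark that the statement and proof go through verbatim for $A=\R$ as well, and that the ULF hypothesis is used only to know $\rho_\#$ lands in $l^\infty$ (so that the maps are defined at the level of the given chain complexes), though in fact killing and relabelling edges preserves $l^\infty$ regardless.

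The main obstacle is conceptual bookkeeping rather than depth: one must correctly define ``hair'' so that $T = T' \sqcup (\text{hairs})$ is an honest partition of the edge set with each hair meeting $T'$ in exactly one vertex, handle the case $T' = \emptyset$ (when $T$ is finite, so $\Hsuf_1(T)=0$ trivially, matching that a finite tree has no bips), and make sure the ``exhaust an infinite hair by finite pieces'' argument is stated cleanly. No genuinely hard estimate arises because, $T$ being a tree, $\cC_\infty = 0$ trivialises the homology down to the cycle space, and cycles in trees are rigidly constrained.
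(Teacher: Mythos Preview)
Your proposal is correct and follows essentially the paper's approach: reduce to $\Hsuf_1(T,A)=\Zsuf_1(T,A)$ (since a tree has no $2$-simplices and no circuits, so $\Bsuf_1=0$ and $\cC_\infty=0$), then show every flow vanishes on edges outside $T'$. Your ``sum the cycle condition over $H_e$'' is exactly the paper's ``leaves are dead-ends, argue inductively'', just packaged differently. The initial detour through a chain homotopy is unnecessary, as you yourself notice.

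Two small corrections. First, hairs are automatically \emph{finite} when $T'\neq\emptyset$: if a hair were infinite, by K\"onig's lemma it would contain a ray $R$ from the attaching vertex, and $T'\cup R$ would then be a strictly larger leafless subtree, contradicting maximality of $T'$. So your parenthetical about half-infinite hairs is moot (and the truncation argument you sketch there does not actually conclude for $\ZZ_2$). Second, $T'=\emptyset$ does \emph{not} force $T$ to be finite (take $T=\NN$); what it does force is that $T$ contains no bip, whence $\Zsuf_1(T,A)=0$ by the bip/circuit decomposition, and the lemma holds.
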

\begin{proof}
    Since $T$ has no non-trivial circuits, we have that $\Hsuf_1(T,A) = \Zsuf_1(T,A)$ (\Cref{H1uf_rewritten}).
    Recall than an element $c\in \Zsuf_1(T,A)$ is a flow on $T$, and since leaves correspond to dead-end for the flow, the edges adjacent to leaves must have zero flow.
    Arguing inductively implies then that $c$ must have support in $T'$, and the equality is achieved.
\end{proof}

    To reduce to trivalent trees, it is now enough to note that any ULF tree is quasi-isometric to trivalent ones.
    Indeed suppose such a tree $T$ is given.
    A trivalent “unfolding” of $T$ can be obtained by replacing each star around a vertex as in~\Cref{startocomb}.
    The uniform bound on degrees ensures that the result, $\tilde T$, is quasi-isometric to $T$.
%Suppose that $\tilde T$ is such a trivalent tree, then we can define the `projection' $p:\tilde T\rightarrow T$ collapsing the combs to their original stars.
    Since $\Huf_1$ is QI-invariant, it is therefore sufficient to study trivalent trees.
%The projection map can be used to obtain specific representatives in $\Huf_1(T)$, knowing $\Huf_1(\tilde T)$.

    \begin{figure}[h]
    \centering
    \includegraphics[width=0.8\textwidth]{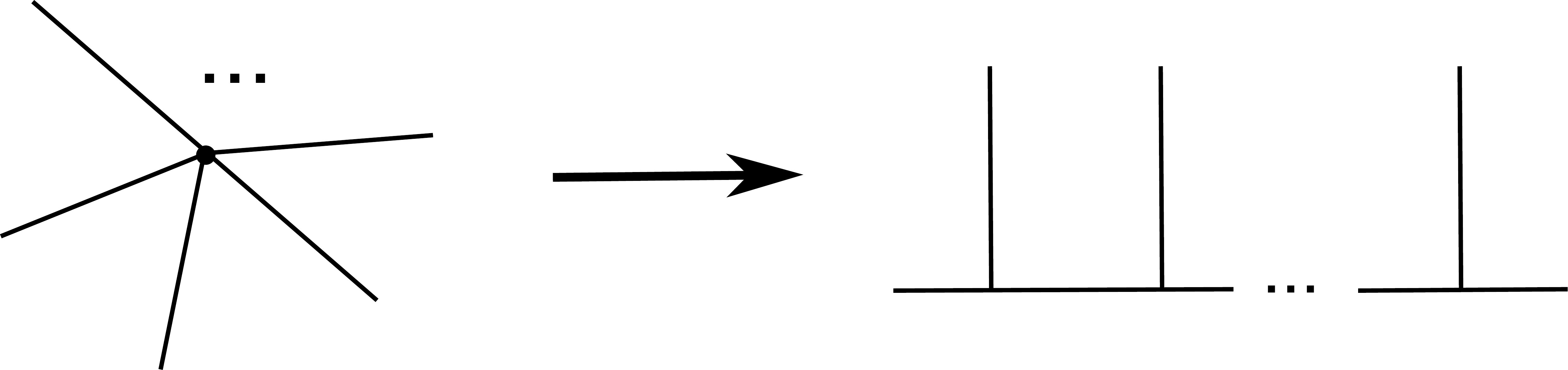}
    \caption{Star to Comb}
    \label{startocomb}
    \end{figure}

\subsection{Homology of Trees}

We want to better describe the first uniformly finite homology of trees.
Recall that we essentially have two kinds of flows on a graph $X$: circuits and bips.
In case $X$ is a tree, circuits are never reduced, thus define zero flow.
The “building blocks” for a flow on $X$ are therefore bips. 
We will extract from $X$ a specific set $P$ of bips which suffices to fully describe flows on $X$.
The construction of $P$ will be explicit but only some abstract “tameness” properties of $P$ will actually be used.

%\begin{definition}
%    Consider a set of bips $\mcP$ in a graph $X$, with a partial order $\leq $. 
%    Then
%    \begin{itemize}
%        \item if $e\in E(X)$, define $\mathcal{P}_e :=\{p\in\mathcal{P} \; : \; e\in p \}$;
%        \item a bip $p\in P$ is said to \emph{be the last bip} an edge $e$ if $\forall q\in P $ with $e\in q$, $q<p$;  
%        \item if $p\in\mathcal{P}$, let $M_p := \{  e\in E(X) \; : \; p \text{ is last on } e \}$.
%    \end{itemize}
%    Then, $P$ is said to be \emph{tame} if: 
%        \begin{itemize}
%            \item $\forall p \in P$, $P_{\leq p} := \{q\in P\ :\ q\leq p\}$ is finite;
%            \item $\forall p \in P$, the set $M_p$ is connected and non-empty;
%            \item There exists some $K\in \NN$ such that $\forall e\in E(X)$, $1\le \lvert P_e \rvert \le K$.
%            \item $\forall p\in P$, there exist $ e\in M_p$ and $ e'\in X$ with $\mathcal{P}_{e'} = \mathcal{P}_{e}\backslash \{p \}$. 
%    \end{itemize}
%\end{definition}

    In order to ease notation we consider the following construction of a trivalent tree (with no leaves).
    \begin{itemize}
        \item
            Start with a single vertex $v_0$, and call that graph $T_0$.
        \item
            Let $T_1$ be the tree corresponding to $\ZZ $, containing $T_0$ as $v_0 = 0$.
            $T_1$ can be viewed as two copies of $\NN $ glued on $v_0$.
    \end{itemize}
    And now inductively:
    \begin{itemize}
        \item
            In $T_n$, choose a set $B_n$ of vertices that does not intersect $T_{n-1}$; call them \emph{branching points} of depth $n$.
            To each $v\in B_n$, glue a copy called $R_v$ of $\NN $ at zero (a new branch).
            For $i\in \NN $, write $i_v$ for the vertex $i$ of $R_v\simeq \NN $, and ${\infty}_v$ for its end (this is not a vertex).
            Let $T_{n+1}$ be the result.
            Formally:
            \[
                T_{n+1} = (T_n \sqcup  \bigsqcup_{v\in B_n} R_v) / \{0_v \sim  v\ |\ v\in  B_n\}. 
            \]
    \end{itemize}
    
    Obviously, $T_n \leq  T_{n+1}$.

    Let us call each $R_v$ the \emph{ray} at $v$ and call $n$ its depth if it is in $T_{n+1}-T_n$.
    The construction $(T_n,B_n)_{n\in\mathbb{N}}$ has depth $N$ if this construction ends at step $T_N$, i.e. if for $k>N$ we have $B_k=\emptyset $.
    The construction has infinite depth otherwise: $T = {\bigcup}_n T_n$.

    Note that for each $n$, the set $B_n$ of branching points of depth $n$ can be partitioned as:
    \[
        B_n = \bigsqcup_{v\in B_{n-1}} B_n \cap  (R_v-\{v\}),
    \]
    since each new branching point has to lie on $T_n - T_{n-1} = \bigsqcup_{v\in B_{n-1}} R_v - \{v\}$.
    Let $B_{n,v} := B_n \cap  R_v - \{v\} = B_n \cap  R_v$ denote the branching points lying on $R_v$.
    There is an obvious ordering on $B_{n,v}$ as a subset of $\NN $: write $v_{v,i}$ the $i$-th element for this ordering (where $i<\alpha $, and $\alpha \in \NN \cup \{\infty \}$ is the cardinal of $B_{n,v}$).
   
%    \begin{figure}[h]
%    \centering
%    \includegraphics[width=0.8\textwidth]{}
%    \caption{(A fragment of) The “comb” construction for a trivalent tree.}
%    \label{figure:comb_construction}
%    \end{figure}

    \begin{proposition}
        All (infinite) trivalent leafless trees are constructed in this fashion, modulo one edge.
    \end{proposition}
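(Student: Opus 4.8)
The plan is to reverse-engineer the construction. Given an infinite trivalent leafless tree $T$, I will exhibit a basepoint, a bi-infinite line, and a system of branching points realising it. First, if $T$ is $3$-regular (every vertex of degree $3$) I subdivide a single edge, producing an infinite trivalent leafless tree with a vertex of degree $2$ and the same $\Huf_1$ (quasi-isometry invariance); this is the one-edge discrepancy in the statement, and it is the only place a modification is ever needed, since otherwise $T$ already has a degree-$2$ vertex. So I may assume $T$ has a vertex $v_0$ with $\deg_T v_0 = 2$, and put $T_0 := \{v_0\}$. Using leaflessness I build a bi-infinite geodesic $L$ through $v_0$: the two components of $T - v_0$ are each infinite (a finite one would contain a leaf), and from each neighbour of $v_0$ I extend a geodesic ray into its component by repeatedly stepping to a not-yet-visited neighbour — possible because every vertex but possibly the starting one has degree $\geq 2$ there. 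Concatenating the two rays with $v_0$ gives $L \cong \ZZ$ (with $v_0 \mapsto 0$); set $T_1 := L$.

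Next I define $T_n$ and $B_n$ recursively, maintaining that each $T_n$ is a subtree of $T$. Given $T_n$, a vertex $v \in T_n \setminus T_{n-1}$ lies on one of the rays added at the previous step and so has degree $2$ along that ray; I declare $v \in B_n$ exactly when $\deg_T v = 3$, in which case its third incident edge $e_v$ automatically does not lie in $T_n$. For such $v$, let $D_v$ be the component of $T$, with all edges of $T_n$ deleted, that contains $e_v$; since $T_n$ is a subtree of the tree $T$, $D_v$ meets $T_n$ only at $v$, and $D_v$ is infinite by leaflessness, so it carries a geodesic ray $R_v$ based at $v$ and starting with $e_v$. Distinct elements of $B_n$ give distinct components, so the $R_v$ are pairwise disjoint and attached only at their bases, and $T_{n+1} := T_n \cup \bigcup_{v \in B_n} R_v$ has exactly the shape prescribed by the construction in the text.

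Finally I check that $\bigcup_n T_n = T$. For a vertex $u$, I follow the unique geodesic $\gamma$ from $v_0$ to $u$: it runs inside $T_1$, and if it proceeds past its last vertex on $T_1$, that vertex carries a third edge leaving $T_1$ — it cannot re-enter the subtree $T_1$ without creating a cycle — hence it lies in $B_1$ and $\gamma$ continues inside the ray it spawns, i.e.\ inside $T_2$. Iterating, each time $\gamma$ leaves $T_k$ it does so at a vertex of $B_k$ along a ray-spawning edge; since $\gamma$ is finite this happens only finitely often, so $u \in T_n$ for some $n$. Hence $T$ is realised by the construction $(T_n, B_n)_n$, of finite depth precisely when the $B_n$ are eventually empty and of infinite depth otherwise.

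I expect the only genuine obstacle to be the bookkeeping at the branching points — justifying that a geodesic leaving $T_k$ must exit at a vertex of $B_k$ and along the edge generating a ray, and that distinct rays never collide — but both are forced by $T$ having no cycles together with the invariant that every $T_n$ is a subtree. The remaining ingredients (extending geodesic rays, the ``infinite branch'' claims) are routine consequences of leaflessness. The single subdivision needed in the $3$-regular case, invisible to $\Huf_1$ by quasi-isometry invariance, is what ``modulo one edge'' refers to; one could also simply subdivide once in every case to obtain a uniform statement.
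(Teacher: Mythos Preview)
Your proof is correct and follows essentially the same approach as the paper: pick (or create by a single subdivision) a degree-$2$ basepoint, build $T_1$ as a bi-infinite line through it, then inductively attach rays at the degree-$3$ vertices of the last layer, and finally argue that the geodesic from $v_0$ to any vertex $u$ passes through only finitely many layers so $T=\bigcup_n T_n$. You supply more justification than the paper (which compresses all of this into five lines), but the strategy is identical.
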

    \begin{proof}
        Start with a vertex $v_0\in T$ of degree two (if such a vertex does not exist, split an edge to obtain one).
        Follow any two paths starting from $v_0$: this defines $T_1$.
        Let $B_1 := \{v\in T_1:\ \mathrm{deg}(v)>2\}$, and from each $v\in B_1$, follow any new ray to get $T_2$.
        Proceed inductively like that, and let $T_\omega  := {\bigcup}_n T_i$.
        It remains to see that $T_\omega =T$, but this follows from the fact that if $v\in VT$ is at distance $d$ from $v$, then necessarily $v\in T_d$.
    \end{proof}

    We now describe how to construct the set $P$ of bips inductively, by following the construction of $T$.
    Fix $T$ a trivalent tree, and consider a sequence $(T_n,B_n)_{n\in \NN }$ that matches the construction above.
    We construct $P$ by induction: $P_{n+1}$ extends $P_n$ and is a tame set of bips for $T_{n+1}$:

    \begin{description}
        \item[$T_0$:]
            no bip: $P_0 = \emptyset $.
        \item[$T_1$:]
            let $P_1$ contain the unique bip in $P$\ : $P_1 = \{T_1\}$.
    
        \item[$T_{n+1}$:]
            Assume now $P_i$ has been constructed for $i\leq n$.

            We will define $P_{n+1} = P_n \sqcup  P_{n+1}'$, with $P_{n+1}'$ the set of \emph{new} bips, as follows:

            First fix $u\in B_{n-1}$, and consider the ray $R_u$ in $T_n$.
            We recall that the branching points of $B_{n+1}$ can be partitioned by the rays on which they lie.
            Likewise, we define new bips for each ray of depth $n$:
            \[
                P_{n+1,u}' :=
                    \begin{cases}
                        \{R_{v_{u,i}}^{-1}+ [v_{u,i},v_{u,i+1}]+R_{v_{u,i+1}}\ |\  i\in \NN \} & \text{ if $|B_{n,u}| = \infty $}\\
                        \{(R_{v_{u,i}}^{-1}+ [v_{u,i},v_{u,i+1}])+R_{v_{u,i+1}}\ |\ i<k\} \cup  \{R_{v_{u,k}}^{-1}+[v_{u,k},{\infty}_u]\} & \text{ if $|B_{n,u}| = k+1$}
                    \end{cases}
            \]
            where $(v_{u,i})_{i<|B_{n,u}|}$ is the ordering of $B_{n,u}$.
            And then
            \[
                P_{n+1}' := \bigsqcup_{u\in B_{n-1}} P_{n+1,u}', \qquad P_{n+1} := P_n \sqcup  P_{n+1}'.
            \]
    \end{description}
   
    If $(T_n,B_n)_{n\in \NN }$ has infinite depth, let $P = \bigcup_n P_n$.
    We then define a (partial) order on $P$ as follows.
    If $p\in P_n$ and $q\in P_{n+1}$ share an edge, let $p\leq q$, and take the transitive closure.
    Say that $p\in P$ is the \emph{last} bip over an edge $e$ if $p$ lies over $e$ and for any other $q\in P$ lying over $e$, we have $q<p$.
    Write 
    \[
        M_p := \{e \in  ET: \ \text{$p$ is last over $e$}\}.
    \]

%    \begin{figure}[h]
%    \centering
%    \includegraphics[width=0.8\textwidth]{}
%    \caption{(A fragment of) The bips corresponding to the comb construction in~\Cref{figure:comb_construction}.}
%    \label{figure:bip_construction}
%    \end{figure}

    We can now describe the properties of $P$ we will use later on:

    \begin{proposition}[$P$ is tame] \label{P_is_tame}
        Let $(T_n,B_n)_{n\in \NN }$ be a tree construction as above, and $(P_n)_{n\in \NN }$ the corresponding construction of bips.
        Let also $\leq $ be the partial order on $P$ described above.
        \begin{enumerate}
            \item
                Each edge is covered by at most three bips.
            \item
                All elements of $P_n'$ lie in $T_{n+1}-T_{n-1}^{+}$ (where $T_k^{+}$ is the $1$-neighbourhood of $T_k$).
            \item
                $P_n$ covers $T_n$.
            \item
                Each $p\in P$ has finitely many predecessors (with respect to $\leq $), and is the last bip over at least one edge (i.e. $M_p \neq  \emptyset $).
            \item
                For any edge $e$ where $p\in P_{n+1}$ is last over $e$, there exists $e'\in T_n^{+}$ such that:
                \[
                    \{q\in P\ :\ e'\in q\} = \{q\in P\ :\ e\in q<p\} = \{q\in P_n\ :\ e\in q\} =: Q_p.
                \]
            \item
                Each set $M_p$ is connected.
        \end{enumerate}
    \end{proposition}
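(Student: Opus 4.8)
All six assertions are bookkeeping consequences of a single \emph{covering lemma}: an explicit description, for every edge $e$ of $T$, of the set of bips of $P$ lying over $e$. I would establish this lemma first and then read off (1)--(6). The organising observation is that each edge $e$ lies in a unique ray of a well-defined depth $n$ (with $T_1$ the unique depth-$0$ ``ray''), and that a bip created at a given stage visits only edges of rays of the two adjacent depths: the full new rays it threads through, and a single segment --- between two consecutive branching points --- of the older ray carrying those branching points. Consequently the bips that can lie over $e\subseteq R_v$ with $R_v$ of depth $n$ are: at most two \emph{anchor} bips, namely the bips of $P_{n+1}'$ that traverse $R_v$ in full, one for each neighbour of $v$ among the depth-$(n{+}1)$ branching points on the carrier ray (only one when $v$ is the first such branching point; for $n=0$ just the bip $T_1$); and at most one \emph{recursion} bip, namely the unique member of $P_{n+2}'=\bigsqcup_{w\in B_n}P_{n+2,w}'$ whose segment is the segment of $R_v$ containing $e$ (unique because consecutive such segments are edge-disjoint). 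Proving this comes down to unwinding the inductive definition of $P_{m,u}'$ --- which edges among the $R_{v_{u,i}}$ and the $[v_{u,i},v_{u,i+1}]$ each bip covers --- and is mechanical.

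\textbf{Parts (1)--(3).} Part (1) is immediate from the covering lemma: at most $2+1=3$ bips over any edge. Part (2) is inspection of which rays the new bips visit: the bips of a single stage live on full rays attached at one level of branching points, together with segments --- each lying strictly between two branching points, hence missing the attaching vertex --- of rays one level older, and this is exactly the claimed neighbourhood exclusion. Part (3) is a short induction on $n$: $P_1=\{T_1\}$ covers $T_1$, and in passing from $T_n$ to $T_{n+1}$ each newly attached ray $R_v$ ($v\in B_n$) is covered in full by its anchor bip in $P_{n+1}'$, while $T_n$ is already covered by $P_n$.

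\textbf{Parts (4)--(6): the order analysis.} By the covering lemma the only bips of $P_n$ that share an edge with a bip $p\in P_{n+1}'$ carried on a ray $R_u$ are the (at most two) anchor bips of $R_u$, which themselves lie in $P_n'$; hence $p$ has at most two immediate predecessors and they live one stage lower, and since $P_1=\{T_1\}$ has none, a descending induction on the stage gives the finiteness of $\{q:q<p\}$. For the remaining statements I would compute $M_p$ directly. If $p=R_{v_{u,i}}^{-1}+[v_{u,i},v_{u,i+1}]+R_{v_{u,i+1}}$ then over each edge of its segment $[v_{u,i},v_{u,i+1}]\subseteq R_u$ the only bips are $p$ itself and the anchors of $R_u$, and each such anchor is $<p$; so $p$ is last over that segment, in particular $M_p\neq\emptyset$, which is (4). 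A short case check identifies $M_p$ with this segment, save that when $v_{u,i}$ is the first, or $v_{u,i+1}$ the last, branching point on $R_u$, the set $M_p$ additionally contains an initial stretch of $R_{v_{u,i}}$, resp.\ the tail of $R_u$ past $v_{u,i+1}$; in every case $M_p$ is a connected subpath, hinged at $v_{u,i}$ or $v_{u,i+1}$, which is (6). For (5): for $e$ in the segment of $p$, every bip over $e$ is either $p$, an anchor of $R_u$ (lying in $P_n$, and $<p$), or a recursion bip of $P_{n+2}'$ (which is $>p$), so $\{q\in P:e\in q<p\}$ is exactly the set of $P_n$-bips over $e$, namely $Q_p$; and one takes $e'$ to be an initial edge of $R_u$ --- lying in $T_n\subseteq T_n^{+}$, before the first branching point on $R_u$ --- over which the bips are precisely the anchors of $R_u$, i.e.\ exactly $Q_p$.

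\textbf{Main obstacle.} The conceptual content is light; the difficulty is the careful bookkeeping, above all the boundary cases: the ``infinite-endpoint'' bips $R_{v_{u,k}}^{-1}+[v_{u,k},\infty_u]$ of the finite-$B_{n,u}$ case, the first and last branching points on a ray, rays with no sub-branching points, and especially the base of the recursion, where $T_1$ is two copies of $\NN$ glued at $v_0$ and one must fix a convention for ``$B_0$'' and for ordering branching points near $v_0$ so that $M_{T_1}\neq\emptyset$ and (1)--(6) hold. I expect (5) to be the most delicate point: producing the witness $e'$ and identifying $Q_p$ when $p$ is last over an edge lying in one of the depth-$n$ rays rather than in a carrier segment (the case $v=v_{u,0}$) requires the construction's conventions to be pinned down precisely, and this is where I would spend most of the effort.
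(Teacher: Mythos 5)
The paper's own proof of \Cref{P_is_tame} is the single line ``By construction'', so your detailed verification is the same (indeed the only possible) approach, simply written out; your covering lemma --- over any edge there are at most two ``anchor'' bips threading the ambient ray in full and at most one later ``recursion'' bip whose carrier segment contains that edge, and nothing else --- is the right organising device, and it does deliver (1), (2), (3), (4), (6) and the carrier-segment case of (5) essentially as you describe.

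Two corrections, one of them substantive. Minor: over an edge of the carrier segment of $p\in P'_{n+1}$ the possible additional bips are not elements of $P'_{n+2}$ (those live on the depth-$n$ rays), but bips of $p$'s own generation, and their segments are edge-disjoint from $p$'s, so the only bips there are $p$ and the anchors of $R_u$; likewise the tail of $R_u$ past the last branching point lies in $M$ of the special bip $R_{v_{u,k}}^{-1}+[v_{u,k},\infty_u]$, not of the bip ending at $v_{u,k}$, which does not even traverse that tail. Substantive: the boundary case of (5) that you defer is not merely a matter of pinning down conventions. When $p$ is the bip through the first branching point $v_{u,0}$ and $e$ is an edge of $R_{v_{u,0}}$ preceding the first branching point of the next generation, $p$ is (vacuously) last over $e$, and both $\{q\in P:\ e\in q<p\}$ and $\{q\in P_n:\ e\in q\}$ are empty; but by (3), passed to the limit, every edge of $T$ carries at least one bip, so no witness $e'$ with $\{q\in P:\ e'\in q\}=\emptyset$ can exist. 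Hence (5) as literally stated has to be restricted (for instance to edges of the carrier segment, equivalently to the case $Q_p\neq\emptyset$); this costs nothing downstream, since in the surjectivity argument of \Cref{theorembips} the sum over $\{e\in q<p\}$ is then empty and the bound $|f(p)|\leq 2\|c\|_\infty$ holds trivially. Apart from this point --- which is really an imprecision in the statement itself, of the same off-by-one kind as the indexing of $P_n'$ in (2) --- your plan goes through.
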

    \begin{proof}
      By construction.
    \end{proof}

    We can actually endow $P$ with a well-ordering (extending $\leq $), which will facilitate the proof in~\Cref{theorembips}.
    \begin{lemma}\label{poset2total} % \preccurlyeq  = \preccurlyeq and \prec  = \prec
        If $(P,\leq )$ is a countable poset such that $\{\leq p\}$ is finite for all $p$, there exist a well ordering $\preccurlyeq $ on $P$ extending $\leq $ (i.e. $p\leq q\Rightarrow p\preccurlyeq q$) and such that $\{\preccurlyeq p\}$ is still finite for any $p$.
    \end{lemma}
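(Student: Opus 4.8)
The plan is to realise $\preccurlyeq$ as an $\omega$-type (or finite) enumeration of $P$: a well-order all of whose proper initial segments are finite is precisely one of order type $\le\omega$, i.e.\ a bijection of $P$ with an initial segment of $\NN$, and ``$\preccurlyeq$ extends $\leq$'' just says this bijection is a $\leq$-refining enumeration. So I want to list the elements of $P$ one finite block at a time, in such a way that whenever $p\leq q$ the element $p$ appears before $q$.

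First I would fix an enumeration $P=\{p_0,p_1,\dots\}$ (finite or infinite) and build finite sequences of distinct elements $\sigma_0,\sigma_1,\dots$, each a prefix of the next, maintaining the invariant that the entry set $S_n$ of $\sigma_n$ is \emph{downward closed} for $\leq$ and that $\sigma_n$ lists $S_n$ in some $\leq$-refining order. Starting from the empty sequence, at stage $n$ I look at $\{\leq p_n\}$, which is finite by hypothesis and itself downward closed (if $r\leq s\leq p_n$ then $r\leq p_n$); I take the sub-poset $U$ of its not-yet-listed elements, list $U$ in a $\leq$-refining order (possible since $U$ is finite: repeatedly pull off a $\leq$-minimal element), and append that list to $\sigma_{n-1}$. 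Then $S_n=S_{n-1}\cup\{\leq p_n\}$ is a union of downward closed sets, hence downward closed, and $p_n\in S_n$, so every element of $P$ eventually appears; the resulting enumeration $\sigma_\infty=\bigcup_n\sigma_n$ has no repetitions and defines a total order $\preccurlyeq$.

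Then I would verify the two required properties. Finiteness of every $\{\preccurlyeq p\}$ is immediate: $\sigma_\infty$ has order type $\le\omega$, so $\{q:q\preccurlyeq p\}$ is a finite initial segment, and in particular $\preccurlyeq$ is a well-order. For ``$\preccurlyeq$ extends $\leq$'', suppose $p\leq q$ and let $n$ be the first stage at which $q$ is listed; then $q\in\{\leq p_n\}$, hence $p\in\{\leq p_n\}$ too, so $p$ is either already listed before stage $n$ (and thus precedes $q$) or listed in the same block as $q$, which was written in a $\leq$-refining order (so again $p$ precedes $q$). Either way $p\preccurlyeq q$.

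The one point needing care — and the only place the finiteness hypothesis on $\{\leq p\}$ genuinely enters — is the consistency of each newly appended block with the part already written: I must check that no new element should, by $\leq$, have come before something already in $\sigma_{n-1}$. But if $r\in\{\leq p_n\}$ and $r\leq q'$ for some $q'\in S_{n-1}$, then downward closedness of $S_{n-1}$ forces $r\in S_{n-1}$, so $r$ is not in the new block $U$ at all. Thus all $\leq$-comparabilities between ``old'' and ``new'' point from old to new, and those internal to the new block are handled by the refining listing — exactly what the invariant was designed to preserve. (Finiteness of $\{\leq p\}$ is also clearly necessary for the conclusion, since $\{\leq p\}\subseteq\{\preccurlyeq p\}$ in any $\leq$-refining well-order.)
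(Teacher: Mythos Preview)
Your proof is correct and takes a somewhat different route from the paper's. The paper first stratifies $P$ into ``levels'' $P_0, P_1, \ldots$, where $P_0$ is the set of $\leq$-minimal elements and $P_{n+1}$ consists of those $p$ all of whose strict predecessors lie in $P_0 \cup \cdots \cup P_n$; each level is an antichain, and the finiteness hypothesis on $\{\leq p\}$ ensures $P = \bigcup_n P_n$. It then runs a diagonal-style algorithm (in the spirit of the usual $\NN^2 \leftrightarrow \NN$ bijection) across the levels to produce the $\leq$-refining enumeration. Your approach instead fixes an arbitrary enumeration of $P$ and greedily absorbs the finite down-sets $\{\leq p_n\}$ one at a time, maintaining the invariant that the already-listed part is downward closed --- which is precisely what makes your ``no new element should have come before something old'' check go through. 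Your argument is arguably more direct and sidesteps both the height stratification and the explicit diagonal enumeration; the paper's version, on the other hand, exposes the level structure of $P$, which is conceptually pleasant even if not strictly needed for the lemma.
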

    \begin{proof}
        Define
        \begin{alignat*}{10}
            P_0      &= \left\{p\in P\ :\ \{<p\} = \emptyset \right\}\\
            P_{n+1} &= \left\{p\in P\ :\ \{<p\} \subseteq  \bigcup_{j=1}^n P_j\right\}
        \end{alignat*}
        and note that, assuming $P$ is non-empty:
        \begin{itemize}
            \item
                $P_0\neq \emptyset $ and $P = \bigcup P_j$, since $\{\leq p\}$ is finite for all $p$;
            \item
                The elements of $P_j$ are all pairwise incomparable (endow them all of any well-ordering)
            \item
                Each $P_j$ is countable (since $P$ is).
        \end{itemize}
        For definiteness, assume that $P$ is infinite countable (the finite case is treated similarly).
        It remains then to construct a bijection $\sigma :\NN\rightarrow P$ such that $\sigma n<\sigma m \Rightarrow  n<m$.
           
        The algorithm in \Cref{algo:total_ordering} constructs $\sigma $.
        The idea is to “diagonally” (as in the usual bijection $\NN^{2} \leftrightarrow  \NN$) exhaust all $P_i$s, while preserving the existing order.
        \begin{algorithm} \caption{Extending partial order to total order.} \label{algo:total_ordering}
        \begin{algorithmic}[1]
        \State $i \gets 0,\ \ j \gets 0,\ \ k \gets 0, \ \ \sigma  \gets \emptyset$
        \While{ $\exists  p \in  P_j - \im \sigma  \text{ such that }\ \forall q<p: q\in  \im \sigma $ }
            \State $\sigma  \gets \sigma  \cup  \{i\mapsto p\}$
            \State $i \gets i+1$
            \If {$j\leq k$}
                \State $j \gets j+1$
            \Else
                \State $j \gets 0,\ \ k \gets k+1$
            \EndIf
        \EndWhile
        \end{algorithmic}
        \end{algorithm}

    \end{proof}

    \begin{corollary}\label{posetistotal}
        The set of bips $P$ can be assumed to be well ordered without breaking any of the properties in~\Cref{P_is_tame} (that is, $P$ stays “tame”). 
        In particular, any edge has a last bip on it:
        \[
            ET = \bigsqcup_p M_p
        \]
    \end{corollary}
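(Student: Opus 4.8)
The plan is to apply \Cref{poset2total} directly to the poset $(P,\leq)$ of \Cref{P_is_tame}. This is legitimate: the bip set $P$ of a ULF tree is countable, and by \Cref{P_is_tame}(4) every principal down-set $\{q:q\leq p\}$ is finite — exactly the hypotheses of \Cref{poset2total}. It therefore produces a well-ordering $\preccurlyeq$ extending $\leq$ and still having all initial segments $\{q:q\preccurlyeq p\}$ finite. We keep the same sets $P_n,P_n'$ of bips and the same tree filtration, but redefine $M_p$ with respect to $\preccurlyeq$ in place of $\leq$, and claim $P$ stays tame.

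Properties (1)--(3) of \Cref{P_is_tame} make no reference to the order — only to $P_n$, $P_n'$, $M_p$ as sets and to the neighbourhoods $T_k^{+}$ — so they carry over verbatim. For (4): finiteness of $\{q:q\preccurlyeq p\}$ is precisely what \Cref{poset2total} provides; and since $\preccurlyeq$ refines $\leq$, any edge over which $p$ is $\leq$-last remains one over which $p$ is $\preccurlyeq$-last, so $M_p^{\preccurlyeq}\supseteq M_p^{\leq}\neq\emptyset$. The displayed equality $ET=\bigsqcup_p M_p$ is then immediate: by (1) and (3) every edge $e$ is covered by a finite, nonempty family of bips, and since $\preccurlyeq$ is a well-order this family has a unique $\preccurlyeq$-greatest member $\ell(e)$; hence $M_p^{\preccurlyeq}=\ell^{-1}(\{p\})$ and the $M_p^{\preccurlyeq}$ partition $ET$.

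What is left — and where the only real work lies — is to re-examine (5) and (6). Passing from $\leq$ to $\preccurlyeq$ can only enlarge each $M_p$, and an edge $e$ newly absorbed into some $M_p^{\preccurlyeq}$ is one lying under two or three bips that are pairwise incomparable for $\leq$; unwinding the explicit construction of $P$, such an $e$ must sit on the initial stretch of a ray $R_v$ (between the branch vertex $v$ and the first branching point of $B_\bullet\cap R_v$), the competing bips being the one or two bips of the \emph{same} construction stage as $p$ that turn through $R_v$. These newly absorbed edges form an interval of the bi-infinite path $E(p)$ glued to $M_p^{\leq}$ at the turning vertex $v$, so $M_p^{\preccurlyeq}$ remains connected, giving (6). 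For (5) one must distinguish ``$q<p$'' (for the original $\leq$) from ``$q\prec p$'': on such a new edge $e$ one has $\{q:e\in q,\ q<p\}=\emptyset=\{q\in P_n:e\in q\}$, because $R_v$ first appears at stage $n+1$, so the identity of (5) holds with the understanding that the witness $e'$ is only needed when $Q_p\neq\emptyset$, i.e.\ when $p$ is already $\leq$-last over $e$, in which case $e'$ is furnished by the original \Cref{P_is_tame}(5); meanwhile the competing same-stage bips, all being $\prec p$, have already been exhausted by the time $p$ is reached in the $\preccurlyeq$-enumeration, which is what the use of (5) in \Cref{theorembips} actually requires. I expect this stage-versus-order bookkeeping to be the main technical hurdle; everything preceding it is formal.
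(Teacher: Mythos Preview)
Your approach is exactly the paper's: invoke \Cref{poset2total} on $(P,\le)$, using \Cref{P_is_tame}(4) for the finiteness hypothesis, and declare the result. The paper in fact gives no proof of the corollary at all---it is stated immediately after \Cref{poset2total} and left as evident---so your verification of (1)--(6) under $\preccurlyeq$ already goes beyond what the paper records.

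One small wobble in your handling of (5): on a newly absorbed edge $e$ the set $\{q:e\in q,\ q\prec p\}$ is \emph{not} empty---it contains the same-stage neighbour bip---so your ``only needed when $Q_p\neq\emptyset$'' clause does not quite dispose of the issue as phrased. The clean way to close this is the observation you essentially make anyway: since $M_p^{\leq}\subseteq M_p^{\preccurlyeq}$ and $M_p^{\leq}\neq\emptyset$, the value $f(p)$ in the proof of \Cref{theorembips} can always be computed at an edge of $M_p^{\leq}$, and there the original \Cref{P_is_tame}(5) supplies the witness $e'$ and hence the bound $\lvert f(p)\rvert\le 2\lVert c\rVert_\infty$. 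So the weakening of (5) you propose is harmless, but the justification should be routed through $M_p^{\leq}\neq\emptyset$ rather than through $Q_p=\emptyset$.
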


    %
    % On rappelle ici qu'on aurait pu mettre un ordre "lexicographic" sur les bips,
    % et ça aurait encore préservé tout tameness, **sauf** le fait que {<p} est fini
    % pour tout p, mais ça serait pas grave est fini
    %

    We prove that if $T$ is a tree (with uniformly bounded vertex degree) and $P$ is as constructed above, then we can use it to describe $\Huf_1(X)$.
    We invite the reader to have a quick peak at the theorem.
    Before proceeding with the statement, let us first motivate the tameness properties of $P$.
    The imposed order and the finiteness condition help in making an inductive argument.
    %They are basically readability constraints (which are not really constraints).
    The non-emptiness of the sets $M_p$ guarantees that the chosen bips are independent, that is, there is no bounded sum that is zero in homology.
    Once we have independence of bips, the connectedness of $M_p$ will guarantee generation of all homology classes.
    The reader might have the feeling that we are constructing some kind of infinite basis — this would be correct, as the theorem states.
    Finally the two remaining conditions just make sure that we only have to consider uniformly bounded sums of bips.

    \begin{theorem}\label{theorembips}
        Given a trivalent tree $T$, a set $P$ of bips on $T$ as constructed above and $A\in \{\ZZ,\ZZ_2\}$, the following map:
        \begin{alignat*}{10}
            l^\infty (P)   &\rightarrow  \Hsuf_1(T,A) \cong  \Huf_1(T,A)\\
            f           &\mapsto  c_f := [e\mapsto \sum_{e\in p\in P}f(p)]
        \end{alignat*}
        is an isomorphism  (continuous when $A=\ZZ$ and $\Hsuf_1(T,A)$ has the $l^\infty $ norm).
    \end{theorem}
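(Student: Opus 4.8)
The plan is to establish that the map $f \mapsto c_f$ is (a) well-defined, landing in $\Zsuf_1(T,A) = \Hsuf_1(T,A)$ with uniformly bounded image; (b) injective; and (c) surjective, then to observe continuity in the $A = \ZZ$ case. Well-definedness is immediate from \Cref{P_is_tame}(1): since each edge is covered by at most three bips, $\sum_{e \in p \in P} f(p)$ is a sum of at most three values, hence bounded by $3\|f\|_\infty$; and each bip is a cycle, so $c_f \in \Zsuf_1(T,A)$, which equals $\Hsuf_1(T,A)$ since $T$ has no non-trivial circuits (by the earlier discussion and \Cref{H1uf_rewritten}). So $c_f$ is a flow, and $\|c_f\| \leq 3\|f\|$ gives continuity for free once linearity is noted.

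For injectivity, I would use the well-ordering $\preccurlyeq$ from \Cref{posetistotal} together with the sets $M_p$. Suppose $c_f = 0$ but $f \neq 0$; let $p$ be the $\preccurlyeq$-least bip with $f(p) \neq 0$. I want to evaluate $c_f$ on an edge $e \in M_p$ (non-empty by \Cref{P_is_tame}(4)) where $p$ is last. The bips through $e$ that come strictly before $p$ in $\preccurlyeq$ all have coefficient zero by minimality of $p$; and $p$ is the $\leq$-last bip over $e$, so no bip through $e$ is $>p$. Hence $c_f(e) = f(p) \neq 0$ (in $A = \ZZ$ this is clean; in $A = \ZZ_2$ the same works since there is genuinely only the single term $f(p)$), a contradiction. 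This is where property (4) — non-emptiness of $M_p$ — does the work, as the introductory remarks flagged.

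Surjectivity is the main obstacle and will require the inductive structure. Given an arbitrary flow $c \in \Zsuf_1(T,A)$, I would construct $f \in l^\infty(P)$ with $c_f = c$ by peeling off bips in the order $\preccurlyeq$ (or by depth $n$, using $P_n$ covering $T_n$, \Cref{P_is_tame}(3)). The idea: working outward from $T_1$, on each ray the flow must be constant (in=out at degree-2 vertices), and at a branching point the flows split; each new bip in $P_{n+1}'$ joins two rays (or a ray and an end), so its coefficient is forced to be the flow value on the appropriate branch. Concretely, define $f(p)$ for $p \in P_{n+1}'$ to be the flow value of $c - c_{f\restriction P_n}$ along the characteristic edge $e'$ of \Cref{P_is_tame}(5), and check by induction that $c - c_{f\restriction P_n}$ is supported off $T_{n-1}^+$ and that $c_f = c$ on all of $T_n$ in the limit; property (5) guarantees the residual flow on $e'$ only sees bips from $P_n$, so the correction is consistent, property (2) controls supports, and property (6) (connectedness of $M_p$) ensures that fixing the coefficient via one edge fixes it correctly over all of $M_p$. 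Uniform boundedness of the resulting $f$ follows because at each stage $|f(p)|$ is bounded by a fixed multiple of $\|c\|$ (again using that each edge carries $\leq 3$ bips, so the residuals cannot amplify). I would organize the induction as: hypothesis $H_n$ = "$c_{f\restriction P_n}$ agrees with $c$ on $ET_{n-1}$ and $c - c_{f\restriction P_n}$ vanishes on edges both of whose endpoints lie in $T_{n-1}$", then verify $H_n \Rightarrow H_{n+1}$ using (2),(3),(5),(6), and pass to the union. The delicate point is checking that the forced values are mutually consistent and uniformly bounded, i.e. that the "following paths" decomposition of the flow really is captured by exactly this $P$ — this is the heart of the theorem and is exactly what the tameness package was engineered to make go through.
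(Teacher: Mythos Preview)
Your well-definedness and injectivity arguments match the paper's exactly. For surjectivity the overall strategy is also the same --- define $f$ inductively along the well-order using the tameness package --- but you have swapped the roles of the two edges appearing in property~(5), and this is a genuine error, not just a notational slip.

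You propose to set $f(p)$ equal to the residual $(c - c_{f\restriction P_n})$ evaluated at the edge $e'$ of \Cref{P_is_tame}(5). But property~(5) says precisely that the bips running through $e'$ are the predecessors $Q_p$ of $p$, and $p$ itself does \emph{not} run over $e'$; so once the inductive hypothesis holds on $P_n$, that residual is zero, not the value you want for $f(p)$. The paper instead takes an edge $e \in M_p$ --- where $p$ \emph{is} the last bip --- and sets
\[
    f(p) := c(e) - \sum_{e\in q<p} f(q),
\]
then checks independence from the choice of $e\in M_p$ by a short local case analysis at degree-$2$ and degree-$3$ vertices (this is where connectedness of $M_p$, property~(6), is actually used). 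The auxiliary edge $e'$ enters only in the boundedness step: since the bips through $e'$ are exactly $\{q<p : e\in q\}$, the inner sum above equals $c_f(e')=c(e')$ by the earlier stages of the induction, whence $|f(p)|\leq 2\|c\|_\infty$ directly, with no amplification issue. So interchange $e\in M_p$ and $e'$ in your surjectivity sketch, run the induction along the well-order rather than through the layered hypotheses $H_n$, and the argument goes through cleanly.
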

    \begin{proof}

        By~\Cref{posetistotal}, we can assume that $P$ is well-ordered.

        The map above is easily seen to be linear, and of norm $\leq K$ (where $|P_e|\leq K$ for all $e\in E$).
        \point{Injectivity:} By induction.
        If $f\neq 0 \in  l^\infty (P)$, there exists a bip $p$ such that $f(p)\neq 0$; take a least such $p$.
        Consider then $e\in M_p$:
        \begin{alignat*}{10}
            c_f(e) = \sum_{e\in q\in P} f(q) &= \sum_{e\in q<p} f(q) + f(p)\\
        \end{alignat*}
        Since by assumption, for each $e\in q<p$, $f(q)=0$, it follows that $c_f(e)\neq 0$.

        \point{Surjectivity:}
        Take a cycle $c \in  \Zsuf_1(T,A)$.
        Then, define inductively for $p\in P$:
        \[
            f(p):= c(e) - \sum_{e\in q<p} f(q),
        \]
        where $e\in M_p$.
        We check that this is independent of the choice of $e\in M_p$.
        By connectedness of $M_p$, it is enough to verify that adjacent edges $e_1,e_2$ in $M_p$ yield the same value.
        There are only two possible configurations locally:
        %TODO: align drawings
        
        \begin{minipage}[b]{0.5\textwidth}
        \begin{tikzpicture}
        \filldraw[double] (-0.5,0) -- (0,0) circle (2pt) node[align=left,   below]{}  -- node[above] {$e_1$}
        (2,0) circle (2pt) node[align=center, below] {} -- node[above] {$e_2$}
        (4,0) circle (2pt) node[align=right,  below] {} -- (4.5,0);
        \draw[green] (0,-0.2) -- (4,-0.2); 
        \end{tikzpicture}
        \end{minipage}
        \begin{minipage}[b]{0.5\textwidth}
        \begin{tikzpicture}
        \filldraw[double] (-0.5,0) -- (0,0) circle (2pt) node[align=left,   below]{}  -- node[above] {$e_1$}
        (2,0) circle (2pt) node[align=center, below] {} -- node[above] {$e_2$}
        (4,0) circle (2pt) node[align=right,  below] {} -- (4.5,0);
        \filldraw (2,0)  -- (2,-0.5) --  node[right] {$e_3$}(2,-1) ;
        \draw[green] (0,-0.2) -- (4,-0.2); 
        \draw[red] (0,-0.3) -- (1.9,-0.3) -- (1.9, -1);
        \draw[blue] (4,-0.3) -- (2.1,-0.3) -- (2.1, -1);
        \end{tikzpicture}
        \end{minipage}

        Horizontally one can see the bip $p$, for which we are defining the value $f(p)$.
        The green ($G$), red ($R$) and blue ($B$) lines represent groups of bips running over these vertices (possibly empty).
        Note that all these bips are in order strictly smaller than $p$ (because $p$ monopolises $e_1,e_2$).
        Hence $f$ is defined for all of them.
        We check that $f(p)$ does not depend on whether we choose $e_1$ or $e_2$.
        There are two situations possible depicted above.
        For the first picture the claim clearly holds, we check the second picture:
        \begin{align*}
            c(e_1) - \sum_{q\in G \text{ or } q\in R} f(q) &= c(e_2) + c(e_3)  - \sum_{q\in G \text{ or } q\in R} f(q)\\
            &= c(e_2) +   \sum_{q\in R } f(q) -  \sum_{q\in B } f(q) - \sum_{q\in G \text{ or } q\in R} f(q)\\
            &= c(e_2) - \sum_{q\in G \text{ or } q\in B} f(q)
        \end{align*}  
        The second equality holds by the induction argument.
        
        Now $f$ is uniformly bounded: from
        \[
            f(p):= c(e) - \sum_{e\in q<p} f(q),
        \]
        we see that 
        \[
            |f(p)| \leq  \|c\|_\infty  + \|\sum_{e\in q<p}f(q)\| \leq  2\|c\|_\infty ,
        \]
        since the elements of $\{e\in q<p\}$ are assumed to share an edge (so that the sum over $\{e\in q<p\}$ is exactly the value $c(e')$ of some edge $e'$).
        
        It is now easily seen that $c_f = c$, and since $\|f\|_\infty  \leq  2\|c\|_\infty $, the desired isomorphism is reached.

    \end{proof}
        
    Note that only some of the abstract “tameness” properties of the set $P$ described in \Cref{P_is_tame}, plus well-ordering, are actually used in the proof.
    Furthermore, one could work without well-ordering (well-foundedness of $\leq $ is enough), at the cost of a less transparent proof.
   
    As an example of necessity of the “tameness” conditions, we give~\Cref{figurebipscounterexample}; the set of bips chosen satisfy each condition but the fifth one — the existence of an edge $e'$ on which exactly the predecessors of a given bip $p$ run.
    As a result the flow defined on the teeth of the comb cannot be described as a bounded sum of the given bips.

    \begin{figure}[h]
        \centering
        \begin{tikzpicture}

            \filldraw (0,0) -- (1,0) circle (2pt) node[align=left,   below]{}  -- (3,0) circle (2pt) node[align=center, below] {} -- (5,0) circle (2pt) node[align=center, below] {} -- (7,0) circle (2pt) node[align=center, below] {} -- (9,0) circle (2pt) node[align=center, below] {} -- (11,0) circle (2pt) node[align=center, below] {} -- (13,0) circle (2pt) node[align=center, below] {} -- (15,0) circle (2pt) node[align=center, below] {}-- (16,0)   ;
            \draw (1,0) -- (1,3) node[align=center, above] {$1$}; 
            \draw (3,0) -- (3,3) node[align=center, above] {$-1$}; 
            \draw (5,0) -- (5,3) node[align=center, above] {$1$}; 
            \draw (7,0) -- (7,3) node[align=center, above] {$-1$}; 
            \draw (9,0) -- (9,3) node[align=center, above] {$1$}; 
            \draw (11,0) -- (11,3) node[align=center, above] {$-1$};
            \draw (13,0) -- (13,3) node[align=center, above] {$1$};
            \draw (15,0) -- (15,3) node[align=center, above] {$-1$};
            \draw[purple] (0,-0.2) -- (16,-0.2) ; 
            \draw[green] (0,0.1) -- (0.9,0.1) -- (0.9,3); 
            \draw[green] (1.1,3) -- (1.1,0.1) -- (4.9,0.1) -- (4.9,3); 
            \draw[green] (9.1,3) -- (9.1,0.1) -- (12.9,0.1) -- (12.9,3); 
            \draw[green] (13.1,3) -- (13.1,0.1) -- (16,0.1); 
            \draw[red] (0,0.2) -- (2.9,0.2) -- (2.9,3); 
            \draw[red] (3.1,3) -- (3.1,0.2) -- (6.9,0.2) -- (6.9,3);  
            \draw[red] (11.1,3) -- (11.1,0.2) -- (14.9,0.2)--(14.9,3); 
            \draw[red] (15.1,3) -- (15.1,0.2) -- (16,0.2); 
        \end{tikzpicture}
        \caption{
            A bi-infinite comb with bips as shown in green and red.
            The cycle we consider is defined by alternating positive and negative values on the teeth of the comb.
            The reader can check that the obtained function on the bips is unbounded.
        }\label{figurebipscounterexample}
    \end{figure}
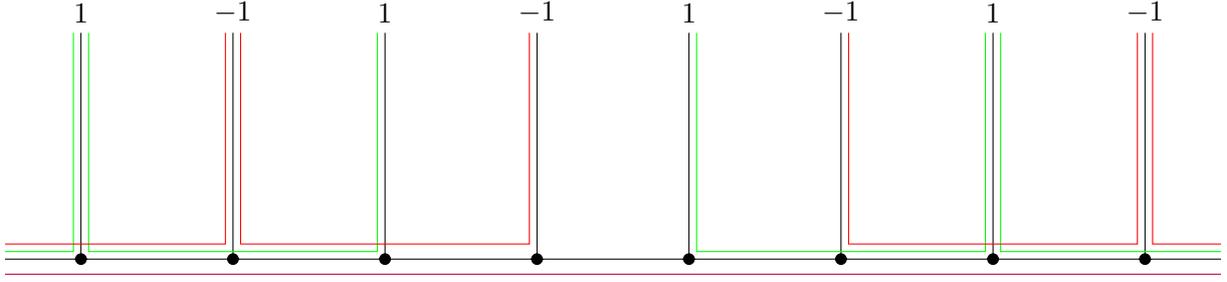
            
 %    Another way to construct a well-ordering on the set of bips is by noting that their original order has Hasse diagram a tree of valency $\leq \omega $.
%    The children of any node are then ordered along $\NN$, and the lexicographic order satisfies our requirements.
%    The downside of this approach is that this well-ordering is not isomorphic to $\NN$.

\subsection{Ends}

    Now that the homology of trees is understood, we can relate it to homology of arbitrary graphs via their ends.
    The core idea is that, when working with ends of graphs, working with nicely embedded trees is enough.

    We start with a definition:
    \begin{definition}[{End-respecting (defining) trees}]
        Let $X$ be an arbitrary ULF graph.
        An end-respecting (resp. end-defining) tree for $X$ is a leafless tree $T\leq X$ satisfying:
        Any end of $X$ is represented by at most one (resp. a unique) infinite branch of $T$.
        Furthermore, $T$ is said to \emph{have separators} if: 
        There exists a sequence $(K_n)_n$ of increasing finite, connected subsets of $VX$ satisfying:
        \begin{itemize}
            \item
                $T\cap K_n$ is connected for all $n$;
            \item
                $\bigcup_n K_n = VX$; 
            \item
                For each infinite connected component $L$ of $X-K_n$, there is exactly one edge of ${\partial}_T(T\cap K_n)$ \footnote{${\partial}_X Y = $ edges of $X$ with exactly one endpoint in $Y$.} incident to $L$, and vice versa.
                This defines a bijection between infinite connected components of $X-K_n$ and ${\partial}_T(T\cap K_n)$.
        \end{itemize}
        Fix $v_T$ a root for $T$, chosen in $K_0$.
    \end{definition}

    \begin{remark}

    End-defining trees with separators can always be constructed.
    Indeed, fix $X$ and some $x\in VX$.
    Consider the smallest $R>0$ such that removing $B(x,R)$ from $X$ disconnects the graph in multiple infinite components.
    Now choose as many edges in the boundary of the ball as there are infinite components, and a geodesic for each of these starting in $x$.
    Combining these geodesics gives a subgraph, and deleting edges as necessary yields a tree.
    Now one repeats this approach inductively:
    Suppose we have a finite tree defined up to $B(x,R)$, with boundary edges $l_i$ in correspondence to infinite connected components $L_i$ (of $X-B(x,R)$).
    Take $\tilde R>R$ least such that $X-B(x,\tilde R)$ has more components than $X-B(x,R)$.
    If a connected component $L_I$ of $X-B(x,R)$ is further separated in components $L_I^{j}$ in $X-B(x,\tilde R)$, with $j\in\{ 1\dots k \}$ with distinguished boundary edges $l_I^j$ for each component in the boundary of $B(x,\tilde R)$, then as before we take geodesics from $l_I$ to each $l_I^j$ and delete possible cycles.
    
    On the other hand, it is not clear to us whether one can always find separators for end-respecting trees. 
    \end{remark}

    The following technical result is close to~\cite[Lemma 2.4]{BW}.
    Given $\phi ,\psi :EX\rightarrow \ZZ$, say that $\psi \leq \phi $ if, for any $e\in EX$: $\phi e\geq 0$ implies $0\leq \psi e\leq \phi e$ and $\phi e\leq 0$ implies $\phi e\leq \psi e\leq 0$.
    \begin{lemma}\label{lemma:extending_rays_generalised}
        Let $\phi \in \Zsuf_1(X,\ZZ )$ and $r:EX\rightarrow  \ZZ $ with $r\leq \phi $, $r\notin \Zsuf_1(X,\ZZ )$, and $\partial r\geq 0$
        Then there exists $\hat r \in \Zsuf_1(X,\ZZ )$ satisfying $r \leq  \hat r \leq  \phi $.
        Furthermore, $\hat r$ and $r$ are different on infinitely many edges.
    \end{lemma}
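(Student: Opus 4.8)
The idea is to "complete" $r$ to a cycle by repeatedly pushing the excess boundary mass off to infinity along edges where $\phi$ still has room. Let me set up the relevant quantities. For $\psi$ with $r\le\psi\le\phi$ and $\partial\psi\ge 0$, define the \emph{excess} at a vertex $v$ to be $(\partial\psi)(v)\ge 0$, and call $v$ a \emph{source} if this excess is positive. Since $\partial\phi=0$ and $r\le\psi\le\phi$, at any source $v$ there must be at least one edge $e$ incident to $v$, oriented out of $v$, with $\psi e<\phi e$ (if $\phi e\ge 0$) or $\psi e>\phi e$ is impossible — more carefully, comparing $\partial\psi(v)>0=\partial\phi(v)$ forces an incident edge on which we may legally increase $|\psi|$ in the direction that moves excess away from $v$. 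The key observation is that because $r\notin\Zsuf_1$, there is at least one source to begin with, and because $X$ has no "sink of infinite capacity" blocking us — in fact because $\partial\phi = 0$ — one can always route the excess to infinity rather than having it accumulate.

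\textbf{First step: routing a single unit of excess to infinity.} Fix a source $v_0$ of $r$. I claim there is an infinite edge-path $e_1,e_2,\dots$ starting at $v_0$, along which one can increase $r$ by one unit (in the appropriate sign, so as to stay $\le\phi$ and keep $\partial\ge 0$), so that the resulting $r'$ satisfies $r\le r'\le\phi$, $\partial r'\ge 0$, $\partial r'(v_0)=\partial r(v_0)-1$, and $\partial r'(v)=\partial r(v)$ for all other vertices — i.e. we have moved one unit of excess from $v_0$ off to infinity. To find such a path: at $v_0$, pick an out-edge $e_1$ with strictly positive remaining capacity (exists since $v_0$ is a source, as above); follow it to its other endpoint $w_1$. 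Now $w_1$ has become a source of $r+e_1$ (its boundary went up by one), so again pick an out-edge $e_2$ with remaining capacity, and continue. This greedy process either runs forever — giving the desired infinite path, provided we can ensure it doesn't revisit a vertex in a way that stalls, which we handle by noting that if we return to an earlier vertex the "loop" can simply be discarded — or it would have to terminate, which it cannot, since termination would require a vertex with no available out-edge despite positive incoming excess, contradicting $\partial\phi=0$. The one subtlety is that the path must not run around a finite cycle forever; but whenever the path first revisits a vertex $u$, we excise the closed loop between the two visits (this only \emph{decreases} the modification and preserves all the boundary bookkeeping), so without loss of generality the path is simple, hence infinite and escaping to infinity.

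\textbf{Second step: handling all sources simultaneously with a uniform bound.} The set $S$ of sources of $r$ is some subset of $VX$, possibly infinite, but $\partial r$ is uniformly bounded (as $|\partial r|\le|\partial\phi|+$ bounded terms, or directly since $r\le\phi$ and $\phi\in\Zsuf_1$ so $\partial\phi$ is bounded, giving $\partial r$ bounded). So the total excess at each source is at most some constant $C$. I run the routing construction of Step 1 simultaneously from every source, sending $\partial r(v)$ disjoint (or at least locally-finitely-overlapping) escaping paths out of each source $v\in S$. The resulting increments, summed, give $\hat r$ with $r\le\hat r\le\phi$ — here I must be slightly careful that the simultaneous increments on any single edge still keep us $\le\phi$; this is where one wants the paths chosen coherently, e.g. by processing sources in some order and always using \emph{currently} available capacity, exactly as in the augmenting-path/max-flow style argument of \cite[Lemma 2.4]{BW}. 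Since at the end no vertex has positive excess and (by construction, never creating negative excess) no vertex has negative excess either, $\partial\hat r=0$, i.e. $\hat r\in\Zsuf_1(X,\ZZ)$. Finally, $\hat r\ne r$ on infinitely many edges: each escaping path from a source modifies infinitely many edges, and there is at least one source since $r\notin\Zsuf_1$.

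\textbf{Main obstacle.} The delicate point is the second step: coordinating infinitely many augmenting paths so that (a) on each edge the accumulated modification never violates $r\le\hat r\le\phi$, and (b) the pointwise-defined limit $\hat r$ is genuinely a function $EX\to\ZZ$ (no edge is modified infinitely often). Point (b) follows if the paths form a locally finite family, which one arranges by the loop-excision trick plus the observation that from a given source the escaping path can be taken geodesic-like (never returning to a ball it has left), so only finitely many paths meet any finite region. Point (a) is the real content and is precisely the bookkeeping in \cite[Lemma 2.4]{BW}; I would either invoke that lemma more or less directly after reducing to its hypotheses, or redo the greedy/transfinite-recursion argument, processing sources one at a time, at each stage using Step 1 with $\phi$ replaced by the capacity remaining after previous stages, and checking that finiteness of per-source excess plus the no-sink property ($\partial\phi=0$) keeps the process going. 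That this terminates in the sense of producing a well-defined limit, rather than oscillating, is guaranteed because each modification is \emph{monotone} ($r\le r'$ always, never undoing), so on each edge the value is nondecreasing in absolute value and bounded by $|\phi e|$, hence stabilises.
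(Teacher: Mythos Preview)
Your core idea is the same as the paper's: build $\hat r$ as the limit of a monotone process sandwiched between $r$ and $\phi$; since each edge value is monotone and bounded by $\phi(e)$, it stabilises, and the limit is a cycle because the process would otherwise keep moving at any vertex with residual excess. That final monotonicity argument you give under ``Point (a)'' is exactly what the paper uses, and it is sufficient on its own.

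Where you diverge is in granularity. You route an entire infinite path per unit of excess; the paper simply adds, at every source simultaneously, \emph{one} available edge, then repeats. This parallel one-edge-per-step iteration avoids all of the complications you encounter: there is no need to discuss loop-excision, geodesic-like paths, or local finiteness of a path family, because the only thing to check is $r \le r^{i} \le r^{i+1} \le \phi$ and $\partial r^{i}\ge 0$, both immediate.

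Two of your intermediate claims are actually not justified as stated. First, loop-excision in Step~1 does not obviously terminate: after excising a loop at $u$ you are back with the same available edges and may pick the same one again. The correct reason the path escapes is capacity: each directed edge can be incremented at most $|\phi(e)-r(e)|$ times, so only finitely many moves occur in any finite region. Second, the ``geodesic-like, hence locally finite family'' claim in Step~2 is unsupported (infinitely many sources could funnel through a single vertex) and, more to the point, unnecessary---your monotonicity observation already gives convergence on each edge. So the proof is salvageable, but you should drop the path machinery and argue directly from monotonicity, which is what the paper does.
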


    Here is a picturesque interpretation:
    View $\phi $ as closed (in the sense that in$=$out) flow on the graph $X$.
    Then $r$ is a non-closed flow that's no greater than $\phi $: any flow on the edges is in the same direction as $\phi $, and not stronger.
    Then $r$ can be made into a closed flow by simply adding flow to edges in the directions allowed by $\phi $ (so, no reduction of what $r$ had and not exceeding what $\phi $ allows).

%    \fxnote*{check that!}{I think the lemma can work if $\partial r$ is not assume $\geq 0$, modulo the “furthermore” part, since the process could end after a finite amount of time.
%    The proof is a bit more tricky since at step 2, one can see that, if $\partial r\geq 0$, all new edges added to $r_i$ are distinct, but if $\partial r$ is not positive, one could add an edge for two vertices, once as “incoming” and once as “outgoing”.
%    This isn't a problem, but the edge shouldn't be counted twice, so it makes the proof more involved. 
%    }

    \begin{proof}
        We extend $r$ step by step, and see that the sequence of extensions converges:
        \begin{enumerate}
            \item
                For any $v$ with $\partial r(v)>0$, we have:
                \[
                    \sum_{u\sim v} \phi (u,v) = 0,
                \]
                so, there must exist $e_v=(u,v)$ with $\phi (e_v)<r_v(e)\leq 0$.
            \item
                Consider now $r^1 := r + \sum_{v\in V_0} e_v$, where $V_0$ is the set of $v$s with $\partial r(v)>0$.
                We notice that this sum is well-defined since no $e_v$ appears in two distinct $v$s, and $r\leq r^{1}\leq \phi $ by construction.
                Also, $\partial r^{1}\geq 0$ since if $v\in V_0$, then $v$ stays (non-strictly) positive in $\partial r^{1}$, and the other $v$s can only increase, by the direction of the added edges.
            \item
                Redo the argument with $r$ replaced by $r^{1}$, and construct inductively a sequence $r=r^{0}\leq r^{1}\leq \ldots\leq \phi $.
            \item
                Since the sequence $(r^i)_i$ is monotonous and  “below” $\phi $, it must converge to some $r \leq  \hat r \leq  \phi $, in $\Zsuf_1(X,\ZZ )$, by boundedness of $\phi $.
            \item
                The statement that $r$ and $\hat r$ differ on an infinite set can be deduced from the fact that the process of extension $r^i \leadsto r^{i+1}$ cannot end after finitely many enumerate.
                Indeed, if $v\in V_i$, and $(u,v)=e_v$ is the edge chosen to “cancel” $v$, then $u$ becomes positive in $\partial r^{i+1}$, so that there are always positive edges.
        \end{enumerate}
    \end{proof}

    A similar result holds for $\ZZ_2$.
    In $EX\rightarrow \ZZ_2$, say $r \leq  \phi $ iff $\phi (e) = 0$ implies $r(e) = 0$ for all $e\in EX$.
    
    \begin{lemma}\label{ray_to_bip_Z2}
        Let $X$ a (ULF) graph, and $\phi  \in  \Zsuf_1(X,\ZZ_2)$, $r \in  \Csuf_1(X,\ZZ_2)$ with $r \leq  \phi $, and $\partial r = {\delta}_v$ for some vertex $v$.
        Then, there exists $r \leq  \hat r \leq  \phi $ with $\hat r \in  \Zsuf_1(X,\ZZ_2)$ and such that $\hat r$ differs from $r$ on infinitely many edges. 
    \end{lemma}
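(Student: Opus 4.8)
The plan is to adapt the proof of \Cref{lemma:extending_rays_generalised} to the $\ZZ_2$ setting. There the bookkeeping collapses to pure support containment, but the convergence step must be arranged slightly differently: every step of the construction will switch \emph{one new edge on}, and the limit will be taken to be the union of the supports.

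First I would run the ``defect-correcting'' step. Since $\partial r = \delta_v$, the vertex $v$ has odd degree in $\supp r$; the cycle condition $\partial \phi = 0$ makes $v$ have even degree in $\supp \phi$; and $\supp r \subseteq \supp \phi$. Hence the number of edges at $v$ that are on in $\phi$ but off in $r$ is odd, in particular nonzero, so we may pick such an edge $e_v = (u,v)$. Put $r^1 := r + e_v$; a one-line computation in $\ZZ_2$ gives $\partial r^1 = \delta_v + \delta_v + \delta_u = \delta_u$, so $r^1 \in \Csuf_1(X,\ZZ_2)$, $r \leq r^1 \leq \phi$, and the unique defect has moved to a neighbour $v_1 := u$ of $v$. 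Iterating yields an increasing chain $r = r^0 \leq r^1 \leq r^2 \leq \cdots \leq \phi$ with $\partial r^i = \delta_{v_i}$, $v_{i+1} \sim v_i$, and $r^{i+1} = r^i + e_{v_i}$, where $e_{v_i}$ is an edge at $v_i$ on in $\phi$ and off in $r^i$ (hence off in every $r^j$ with $j \leq i$). In particular the process never terminates, since $\partial r^i = \delta_{v_i} \neq 0$ always exhibits a vertex to be corrected.

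Next I set $\hat r := \bigcup_i \supp r^i$, the pointwise limit of the $r^i$ in $\Csuf_1(X,\ZZ_2) = l^\infty(EX,\ZZ_2)$; clearly $r \leq \hat r \leq \phi$. The one substantive point is that $\hat r$ is a cycle, and this is where ULF enters. Fix a vertex $w$. Whenever $v_i = w$, the step adds a fresh edge incident to $w$ to $\supp \hat r$, and these added edges are pairwise distinct; since $X$ is ULF, $w$ has only finitely many incident edges, so the defect visits $w$ only finitely often. As the process has infinitely many steps, there is $i_0$ with $v_i \neq w$ for all $i \geq i_0$, and no new edge at $w$ is switched on past $i_0$; therefore $\deg_{\hat r}(w) = \deg_{r^{i_0}}(w)$, which is even because $\partial r^{i_0}(w) = 0$. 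Hence $\partial \hat r = 0$, i.e.\ $\hat r \in \Zsuf_1(X,\ZZ_2)$. Finally the edges $e_{v_i}$, $i \geq 0$, are infinitely many, pairwise distinct, and each lies in $\supp \hat r \setminus \supp r$, so $\hat r$ differs from $r$ on infinitely many edges.

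The only non-routine step is the verification that the limit satisfies the cycle condition — that the wandering defect eventually leaves every vertex for good — and the key observation making it work is that each step consumes a new edge at the current defect vertex, so ULF bounds the number of visits to any fixed vertex. (The same observation shows more: $(v_i)$ leaves every finite subset of $VX$, so $\hat r$ extends $r$ by a ray running off to infinity, matching the picturesque description given after \Cref{lemma:extending_rays_generalised}.)
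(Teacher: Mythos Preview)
Your proof is correct and follows essentially the same approach as the paper's: both iterate the one-step correction $r^{i+1} := r^i + e_{v_i}$ at the current defect vertex and pass to the limit. Your write-up is considerably more detailed than the paper's sketch (which literally says ``continue by induction''), in particular your verification that the pointwise limit is a cycle---via the ULF bound on the number of visits to any fixed vertex---fills in a point the paper leaves implicit.
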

    \begin{proof}
        By mimicking the $\ZZ$ case.
        Since $\sum_{e \sim v} \phi (e) = 0 \neq  \sum_{e \sim v} r(e)$, there exists $e$ with $\phi (e)=1, r(e) = 0$.
        Thus, define $r^{1} := r + e$ and continue by induction.
    \end{proof}

	We can now prove the following. 

    \begin{proposition}[End-respecting trees inject in $\Huf_1$]\label{tree_injection}
        Let $X$ be ULF and $T$ an end-respecting tree with separators.
        Then:
        \[
            \Huf_1(T,\ZZ ) \leq  \Huf_1(X,\ZZ ).
        \]
        The same holds for $\ZZ_2$.
    \end{proposition}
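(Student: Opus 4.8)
The plan is to build an injection $\Huf_1(T,\ZZ)\to\Huf_1(X,\ZZ)$ using the "hands-on" description $\Huf_1(\cdot,\ZZ)\cong\Zsuf_1(\cdot,\ZZ)/\cC_\infty(\cdot,\ZZ)$ of \Cref{H1uf_rewritten}. Since $T\leq X$, a flow on $T$ is in particular a flow on $X$ (extend by zero), giving a map $\Zsuf_1(T,\ZZ)\hookrightarrow\Zsuf_1(X,\ZZ)$; because $T$ is a tree it has no circuits, so $\cC_\infty(T,\ZZ)=0$ and $\Huf_1(T,\ZZ)=\Zsuf_1(T,\ZZ)$. Thus the induced map $\Huf_1(T,\ZZ)\to\Huf_1(X,\ZZ)$ sends a flow $c$ on $T$ to its class $[c]$ in $\Zsuf_1(X,\ZZ)/\cC_\infty(X,\ZZ)$, and the whole problem is to show this is injective: if $c\in\Zsuf_1(T,\ZZ)$ is a thin sum of circuits in $X$ with uniformly bounded total, then $c=0$.

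The idea for injectivity is to use the separators $(K_n)$ and the bijection they provide between infinite components of $X-K_n$ and boundary edges $\partial_T(T\cap K_n)$. Suppose $c\neq 0$; by \Cref{theorembips} applied to $T$ (after the reduction to trivalent leafless trees), $c$ corresponds to a nonzero bounded function $f$ on the bip-basis $P$, so there is some bip $p$ with $f(p)\neq 0$, and correspondingly an edge $e\in M_p$ of $T$ carrying nonzero $c$-value that is "last" — i.e. $c(e)$ equals the net flow of the finitely many bips of $T$ crossing $e$. Choosing $n$ large enough that $e\in\partial_T(T\cap K_n)$ and that the relevant finite data is captured, one considers the net flow of $c$ across the cut $\partial_X(K_n)$ into the infinite component $L$ of $X-K_n$ corresponding to $e$. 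On one hand, since $c$ is a cycle on $X$ supported on $T$, and $T$ meets $\partial_X(K_n)$ only in $e$ (by the separator property), the net $c$-flow across $\partial_X K_n$ into $L$ equals $\pm c(e)\neq 0$. On the other hand, I want to show that a thin, uniformly bounded sum of circuits of $X$ has zero net flow across every such finite cut: a single circuit contributes zero net flow across any edge cut (it enters and leaves any vertex set equally often), and by local finiteness only finitely many circuits in the thin sum meet the finite cut $\partial_X K_n$, so their net contributions sum termwise to zero. This contradiction forces $c=0$.

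The main obstacle is the middle step — reconciling "net flow across the separator cut" computed from the $T$-side versus the $X$-side, and making precise that the thin sum of circuits has vanishing net flux across $\partial_X K_n$. The delicate point is that $\partial_X K_n$ may be infinite even though $K_n$ is finite (a finite vertex set can have infinitely many incident edges only if the graph is not locally finite — but $X$ is ULF, so $\partial_X K_n$ is in fact finite); granting finiteness of the cut, only finitely many circuits of the thin family cross it, so "net flux $=0$" is a finite sum of zeros and is legitimate, and the bounded-coefficient hypothesis is not even needed here. One must also be slightly careful that the bijection in the separator condition is between \emph{infinite} components and boundary edges, so flow escaping into finite components of $X-K_n$ must be handled — but such flow must eventually return (a cycle cannot have net flux into a finite region), so the net flux into $L$ still equals $\pm c(e)$. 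The $\ZZ_2$ case is identical, replacing signed net flux by parity of the number of cut-edges carrying flow and invoking \Cref{ray_to_bip_Z2} in place of \Cref{lemma:extending_rays_generalised} where an extension argument is needed; in fact for the injectivity argument no extension lemma is needed, only the cut/flux bookkeeping, which works verbatim over $\ZZ_2$.
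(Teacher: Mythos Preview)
Your net-flux argument is correct and is in fact more direct than the paper's proof. The paper proceeds differently: after locating the separator $K$ and the component $L$ with a unique tree-edge $e$ carrying nonzero flow, it follows $f$ backwards along $T\cap L$ to obtain rays $r_1,\dots,r_k$, isolates the circuits $C_L$ of the decomposition lying in $L$, and then invokes the extension lemma (\Cref{lemma:extending_rays_generalised}, resp.\ \Cref{ray_to_bip_Z2} over $\ZZ_2$) to extend the partial sum of ray-segments to a full cycle $\hat r$ dominated by $\sum C_L - f_L$; the contradiction comes from the fact that $\hat r$ must differ from $r$ on infinitely many edges, while far from $K$ there is no room for it to do so. Your argument bypasses all of this: since the cut between $K_n$ and $L$ is finite ($K_n$ finite, $X$ ULF), only finitely many circuits of the thin family cross it, each contributing zero net flux into $L$; hence so does $c$. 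This yields the same conclusion---indeed the stronger statement $\Zsuf_1(T)\cap\cC=0$ that the paper records separately as \Cref{Z1T_and_Cycle_is_trivial} and uses in \Cref{decomp_Z2}---with no appeal to the extension lemmas. The paper's route exercises those lemmas, which are of independent interest, but for the bare injectivity your approach is cleaner and works uniformly over $\ZZ$ and $\ZZ_2$.

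Two small points to tighten. First, the detour through \Cref{theorembips} and the sets $M_p$ is unnecessary: you only need some edge $e_0$ of $T$ with $c(e_0)\neq 0$, not a ``last'' one. Second, the phrase ``choosing $n$ large enough that $e\in\partial_T(T\cap K_n)$'' does not quite work as written: since the $K_n$ are nested with union $VX$, a given edge may jump from having both endpoints outside $K_{n-1}$ to both inside $K_n$ without ever lying on the boundary. The fix is immediate: pick $n$ so that $e_0$ lies entirely inside $T\cap K_n$; if every boundary edge $e_L\in\partial_T(T\cap K_n)$ had $c(e_L)=0$, then $c$ restricted to the finite subtree $T\cap K_n$ would be a cycle on a finite tree, hence identically zero, contradicting $c(e_0)\neq 0$. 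So some $e_L$ carries nonzero flow, and your flux contradiction applies to the corresponding component $L$.
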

    \begin{proof}
        We first consider $\ZZ$.
        Let $(K_i)_i$ the sequence of “separators” associated to $T$.
        \begin{enumerate}
            \item
                First, recall that $\Huf_1(X,\ZZ ) \cong  \Zsuf_1(X,\ZZ )/\cC_\infty $, so, in particular, $\Huf_1(T,\ZZ ) \cong  \Zsuf_1(T,\ZZ )$, $T$ being a tree.
                From now on, drop the $\ZZ $.
                We consider the function:
                \[
                    \Zsuf_1(T) \ovs{\subseteq }\rightarrow  \Zsuf_1(X) \ovs{/}\rightarrow  \Zsuf_1(X)/\cC_{\infty}
                \]
                so that showing injectivity amounts to showing that no non-zero function in $\Zsuf_1(T)$ can be written as an element of $\cC_\infty $.
                We actually show more: the intersection of $\Zsuf_1(T)$ and $\cC$ is trivial.
            \item
                Assume, towards a contradiction, that there exists $0\neq f\in \Zsuf_1(T) \cap  \cC$.
                In particular there exists a thin set of circuits $C$ such that:
                \[
                    f = \sum C.
                \]
                (Formally, one should add integer coefficients to the sum, but we'll stick to the shorthand).
            \item
                Since $T$ is end-respecting, there exists some $K=K_n$ and an edge $e\in \supp f$ so that $K$ disconnects the space in multiple infinite components, one of which -- call it $L$ -- is such that $\supp f \cap  \partial L$, consists of exactly one edge $e$.
            \item
                Up to taking $-f$, we may assume that $e$ is directed “away from $L$”, and that $f$ is positive on $e$.
            \item \label{follow_f_backwards}
                Following $f$ backwards starting from $e$, we get a set of $k:=f(e)$ rays $r_1,\ldots,r_{k}$ supported in $T\cap (L\cup \{e\})$and all ending with $e$.
            \item
                Let $C_L$ the set of circuits in $C$ that lie in $L$ and do not intersect $K$.
                
                Let $r$ be the sum of initial segments of each $r_i$, so that $r$ has support not intersecting the circuits of $C$ not in $C_L$ (only a finite number of them have to be taken into account since $C$ is thin and $K$ finite).
                Note that $r\leq f$ by construction.

                Let $f_L$ be the restriction of $f-(\sum r_i)$ to $L$.
            \item
                Remark that $f_L \in  \Zsuf_1(L)$, $r\leq C_L - f_L$ and, far enough from $K$, we have:
                \[
                    r = C_L - f_L
                \]
            \item \label{special_Z}
                By \Cref{lemma:extending_rays_generalised}, there exists $\hat r\in \Zsuf_1(L)$ with $r\leq \hat r \leq  C_L - f_L$, $\hat r$ distinct from $r$ at infinitely many edges.
            \item
                Then $\hat r$ must be distinct from $r$ away from $K$.
                Since $r\leq \hat r \leq  C_L - f_L$ and, away from $K$, $r=C_L - f_L$, we reach a contradiction.
        \end{enumerate}

        The case of $\ZZ_2$ is a simple adaptation.
        We follow the same procedure up to step~\ref{follow_f_backwards}.
        There, we can just follow \emph{one} ray $r_1$ along $f$.
        Then, at step~\ref{special_Z}, we use \Cref{ray_to_bip_Z2} instead of \Cref{lemma:extending_rays_generalised} to extend $r$ to some bip $\hat r$, with $\hat r \leq  \sum C_L - f_L$, and differing from $r$ arbitrarily far from $K$, from which we again deduce a contradiction.

    \end{proof}

    We extract the core property used in the proof for future use:
    \begin{corollary}[of the proof]\label{Z1T_and_Cycle_is_trivial}
        With the assumptions of~\Cref{tree_injection}:
        \[
            \Zsuf_1(T) \cap  \cC(X) = 0.
        \]
    \end{corollary}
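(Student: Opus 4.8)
The key observation is that this corollary is exactly the statement proved \emph{en route} in \Cref{tree_injection}: in step~(1) of that proof the injectivity of the composite $\Zsuf_1(T) \hookrightarrow \Zsuf_1(X) \twoheadrightarrow \Zsuf_1(X)/\cC_\infty$ is reduced to the stronger claim that $\Zsuf_1(T) \cap \cC(X) = 0$, and steps~(2)--(9) establish precisely this, for both $A = \ZZ$ and $A = \ZZ_2$. So I would simply point to that argument; nothing new is required. For the record, here is its shape.

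Suppose for contradiction that there is $0 \neq f \in \Zsuf_1(T) \cap \cC(X)$ and write $f = \sum C$ for a thin family $C$ of coefficiented circuits in $X$. Since $T$ is end-respecting with separators $(K_n)_n$, choose $n$ and an edge $e \in \supp f$ so that $K_n$ cuts $X$ into infinite components, one of which, $L$, satisfies $\supp f \cap {\partial}_X L = \{e\}$; replacing $f$ by $-f$ if needed, orient $e$ out of $L$ with $f(e) = k > 0$. Following $f$ backwards from $e$ inside $T \cap (L \cup \{e\})$ yields $k$ rays $r_1,\dots,r_k$ all terminating at $e$. Let $C_L \subseteq C$ be the circuits contained in $L$ and disjoint from $K_n$ (thinness of $C$ and finiteness of $K_n$ mean only finitely many circuits reach near $K_n$), put $f_L := (f - \sum_i r_i)|_L \in \Zsuf_1(L)$, and let $r$ be a sum of initial segments of the $r_i$ chosen so that $r \leq \sum C_L - f_L$ everywhere with equality outside a bounded neighbourhood of $K_n$, and $\partial r \geq 0$ with $r \notin \Zsuf_1(L)$. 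Then \Cref{lemma:extending_rays_generalised} gives $\hat r \in \Zsuf_1(L)$ with $r \leq \hat r \leq \sum C_L - f_L$ differing from $r$ on infinitely many edges; but away from $K_n$ we have $r = \sum C_L - f_L = \hat r$, a contradiction. The $\ZZ_2$ case is identical, using a single ray and \Cref{ray_to_bip_Z2} in place of \Cref{lemma:extending_rays_generalised}.

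\textbf{Main obstacle.} The only delicate point is the bookkeeping in the "follow $f$ backwards and truncate" step: one must produce $r$ with $r \leq \sum C_L - f_L$ holding globally, with genuine equality far from $K_n$, and with $\partial r \geq 0$ so that \Cref{lemma:extending_rays_generalised} applies. Everything else is formal, and since the present corollary is literally the intermediate conclusion of the proof of \Cref{tree_injection}, it suffices to cite that proof.
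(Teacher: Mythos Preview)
Your proposal is correct and matches the paper exactly: the corollary is labeled ``of the proof'' precisely because steps~(2)--(9) of the proof of \Cref{tree_injection} establish $\Zsuf_1(T)\cap\cC(X)=0$ directly, and the paper offers no separate argument beyond extracting that intermediate conclusion. Your summary of those steps is accurate, including the identification of the bookkeeping around $r$ as the only nontrivial point.
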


    \begin{corollary}
        The dimension of $\Huf_1(X,\ZZ)$ is at least the number of ends of $X$ minus $1$.
    \end{corollary}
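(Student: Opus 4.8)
The statement is vacuous when $X$ has at most one end, so assume $X$ has $n\geq 2$ ends, where $n$ is a cardinal (possibly infinite). The plan is to transport the desired lower bound, along the injection of Proposition~\ref{tree_injection}, from a nicely embedded tree whose first homology we understand completely, and then to exhibit $n-1$ independent cycles in that tree by hand.

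By the Remark following the definition of end-defining trees, fix an end-defining tree $T\leq X$ with separators (end-defining trees are leafless by definition, and such a $T$ always exists). Since $T$ is end-defining, its ends are in bijection with those of $X$, so $T$ has $n$ ends. As $T$ has no nontrivial circuits, $\cC_\infty(T,\ZZ)=0$, so Proposition~\ref{H1uf_rewritten} gives $\Huf_1(T,\ZZ)\cong\Zsuf_1(T,\ZZ)$, and Proposition~\ref{tree_injection} (equivalently Corollary~\ref{Z1T_and_Cycle_is_trivial} together with $\cC_\infty\subseteq\cC$) shows that the inclusion $T\hookrightarrow X$ induces an injection $\Zsuf_1(T,\ZZ)\hookrightarrow\Huf_1(X,\ZZ)$. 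Hence it suffices to exhibit a free $\ZZ$-submodule of rank $n-1$ inside $\Zsuf_1(T,\ZZ)$.

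Fix the root $v_T$ of $T$, a geodesic ray $\rho_{\mathfrak e}$ from $v_T$ for each end $\mathfrak e$ of $T$, and one end $\mathfrak e_0$. For $\mathfrak e\neq\mathfrak e_0$ set $b_{\mathfrak e}:=\rho_{\mathfrak e}-\rho_{\mathfrak e_0}\in\Csuf_1(T,\ZZ)$; equivalently, $b_{\mathfrak e}$ is the bi-infinite path running out to $\mathfrak e_0$ and back, then out to $\mathfrak e$ (the common initial segment cancels). Each $b_{\mathfrak e}$ is bounded by $1$ and is a cycle, because $\partial\rho_{\mathfrak e}$ and $\partial\rho_{\mathfrak e_0}$ are both supported at $v_T$ with the same coefficient. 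I claim $\{b_{\mathfrak e}:\mathfrak e\neq\mathfrak e_0\}$ is $\ZZ$-linearly independent: given a finite relation $\sum_{k=1}^{m}c_kb_{\mathfrak e_k}=0$, the finitely many distinct geodesic rays $\rho_{\mathfrak e_0},\rho_{\mathfrak e_1},\dots,\rho_{\mathfrak e_m}$ from $v_T$ are pairwise eventually disjoint, so some edge $e^\ast$ of $\rho_{\mathfrak e_1}$ lies on none of the others, and evaluating the relation at $e^\ast$ gives $c_1=0$; symmetrically each $c_k=0$. Thus the $b_{\mathfrak e}$ generate a free submodule of rank $n-1$ of $\Zsuf_1(T,\ZZ)$, hence of $\Huf_1(X,\ZZ)$, which is the assertion.

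I expect no serious obstacle here; the only points requiring attention are (i) verifying that $b_{\mathfrak e}$ is a uniformly bounded \emph{cycle} and not merely a chain, and (ii) the bookkeeping when $n$ is infinite, where one must use the common base ray $\rho_{\mathfrak e_0}$ for \emph{all} the $b_{\mathfrak e}$ simultaneously rather than "consecutive differences", which are not available for an arbitrary index set. One may also avoid writing cycles down at all: reduce to a trivalent leafless tree using quasi-isometry invariance of $\Huf$ and apply Theorem~\ref{theorembips}, whose bip set $P$ has cardinality $n-1$ when $n$ is finite and $n$ otherwise, so that $\Huf_1(X,\ZZ)\supseteq\Huf_1(T,\ZZ)\cong l^\infty(P)$ has dimension at least $n-1$.
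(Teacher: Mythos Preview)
Your proof is correct and follows the paper's intended route: take an end-defining tree with separators, use the injection of Proposition~\ref{tree_injection}, and read off the lower bound from the homology of the tree. The paper treats the corollary as immediate from Proposition~\ref{tree_injection} together with Theorem~\ref{theorembips} (which identifies $\Huf_1(T,\ZZ)\cong l^\infty(P)$ with $|P|$ equal to the number of ends minus one in the finite case), and you mention this alternative yourself at the end.

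The one difference worth noting is that your primary argument bypasses Theorem~\ref{theorembips} entirely: instead of invoking the full classification of $\Huf_1$ for trees, you exhibit the $n-1$ independent classes by hand via the differences $b_{\mathfrak e}=\rho_{\mathfrak e}-\rho_{\mathfrak e_0}$. This is more elementary and perfectly adequate for the corollary as stated, since one only needs $\ZZ$-linear independence of finitely many classes, not the stronger statement that an entire $l^\infty$ space embeds. Your verification that these bips are bounded cycles and that distinct geodesic rays from the root eventually separate in the tree is correct. The paper's route via Theorem~\ref{theorembips} buys the sharper conclusion $l^\infty(P)\hookrightarrow\Huf_1(X,\ZZ)$, but that is not needed here.
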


    Recall that any finite subset in a graph induces a partition of its ends given by the connected component in the complement of the set in which they lie.
	\begin{lemma}\label{boundaryertrees}
        \newcommand{\cP}{\mathcal{P}}
        Let $T_1,T_2$ be two end-defining trees in the ULF graph $X$.
        For any edge $e$ in $T_1$, there exists a compact $K\subset X$, such that the partition $\mathcal{E}$ induced by $K$ on the ends of $T_2$ is a subpartition of the one induced by $e$ on $T_1$. 
        In particular, the shadow topology induced on the $\partial X = \partial T_1 =\partial T_2$ by $T_1$ is the same as the topology induced by $T_2$. 
	\end{lemma}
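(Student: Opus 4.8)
The plan is to exploit the fact that an end-defining tree $T_i$ picks out, for each of its edges, a "separator" that splits its own ends into exactly two pieces, and that these separators are essentially canonical: two ends of $X$ lie on opposite sides of an edge $e\in T_1$ precisely when they are eventually separated by the finite set $K_e := $ (the union of $e$ together with the finite part of $T_1$ between the root $v_{T_1}$ and $e$). So the first step is to make precise what partition an edge $e\in T_1$ induces on $\partial X$. Since $T_1$ is end-defining, each end of $X$ is represented by a unique infinite branch of $T_1$; removing the open edge $e$ from $T_1$ splits $T_1$ into two components, and this splits $\partial T_1=\partial X$ into two classes (one possibly empty, if $e$ is a "boundary" edge with only finitely many branches through it, but in the leafless end-defining case every edge has infinitely many branches on the far side). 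Call these two classes $U_e$ and $\partial X\setminus U_e$.

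Next I would show that $U_e$ is "clopen" for the shadow topology coming from $T_2$ in a strong, finite-set-witnessed way. Concretely: since $T_1$ is leafless and embedded in $X$, removing $e$ from $X$ (not just from $T_1$) leaves the vertex set $VX$ in finitely many components up to a finite core; more carefully, pick the separator $K_n$ associated to $T_1$ with $e\in K_n$ (or just take $K := K_e$ as above, which is finite and connected since $T_1\cap K_n$ is connected). Then the infinite components of $X\setminus K$ are in bijection (via the separator property of $T_1$) with the boundary edges of $T_1\cap K$, hence with sub-branches of $T_1$, hence they partition $\partial X$; and the union of those infinite components lying "beyond $e$" realizes exactly $U_e$. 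This gives a compact $K\subset X$ such that the partition of $\partial X$ induced by $K$ refines the two-piece partition induced by $e$ on $T_1$. Finally, the partition induced by $K$ on $\partial X$ is, by the exact same argument applied to $T_2$ in place of $T_1$ — i.e. because $T_2$ is \emph{also} end-defining and end-respecting, every end is a unique branch of $T_2$ and $T_2\cap K$ is connected after possibly enlarging $K$ to some $K_m$ in $T_2$'s separator sequence — a partition into shadows of $T_2$ (one shadow per boundary edge of $T_2\cap K$). Enlarging $K$ to contain a suitable separator of \emph{both} trees simultaneously (take $K := K_n^{(1)}\cup K_m^{(2)}$ for $n,m$ large enough that this is still connected, using that the $K$'s exhaust $VX$) makes $\mathcal{E}$ literally a partition into $T_2$-shadows that refines the $e$-partition of $T_1$, which is the first assertion.

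For the "in particular" clause: the shadow topology induced by $T_i$ on $\partial X$ has as a basis the family of shadows of edges (equivalently, of the two-piece partitions induced by single edges, or by the separators $K_n^{(i)}$). The first part of the lemma says every basic open set of the $T_1$-topology (a shadow of an edge $e\in T_1$, i.e. $U_e$) is a union of basic open sets of the $T_2$-topology — namely the classes of $\mathcal{E}$ that fall inside $U_e$, each of which is a $T_2$-shadow. Hence the $T_1$-topology is coarser than the $T_2$-topology; by symmetry (swap the roles of $T_1$ and $T_2$) it is also finer, so the two topologies coincide.

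The main obstacle is bookkeeping around the separators: one has to be careful that enlarging the compact set $K$ so that it simultaneously serves as a separator for $T_1$ and for $T_2$ keeps $T_i\cap K$ connected and keeps the bijection "infinite components $\leftrightarrow$ boundary edges" intact for each tree, and one must handle the degenerate case of an edge $e\in T_1$ lying on only finitely many infinite branches (so that $U_e$ or its complement is empty or finite) — in the genuinely end-defining, leafless setting this is where checking that "leafless + end-defining" forces every edge of $T_1$ to have infinitely many ends beyond it (or the partition is trivial and there is nothing to prove) pays off. None of these steps is deep, but the lemma is entirely a matter of organizing the separator data, so the write-up needs care rather than a new idea.
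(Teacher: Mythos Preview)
Your argument hinges on the separator sequences $K_n^{(1)}$ and $K_m^{(2)}$ for $T_1$ and $T_2$. But the lemma is stated for arbitrary end-defining trees, and in the paper's definitions ``end-defining'' does \emph{not} include the separator property; having separators is an additional hypothesis, and the authors explicitly remark that it is unclear whether one can always find separators for a given end-respecting tree. So you are assuming structure that is not given. In particular, the step ``the infinite components of $X\setminus K$ are in bijection (via the separator property of $T_1$) with the boundary edges of $T_1\cap K$'' is exactly what fails without separators: an end-defining tree can sit in $X$ in a very twisted way, and there is no a priori reason why removing a finite piece of $X$ should line up with removing an edge of $T_1$.

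The paper's proof avoids separators altogether and proceeds by contradiction and compactness. If no ball $B_R$ works as $K$, then for every $R$ there are ends $\varepsilon_R\in E^-$ and $\varepsilon_R'\in E^+$ whose $T_2$-rays agree inside $B_R$. One takes the representing rays in both $T_1$ and $T_2$, uses local finiteness to pass to pointwise-convergent subsequences, and checks that the limiting rays in $T_1$ and $T_2$ determine the same end of $X$. This forces $\lim r_R$ and $\lim r_R'$ to have the same end, contradicting the fact that one lies in $E^-$ and the other in $E^+$ (and $e$ separates these in $T_1$). That limiting argument is precisely what replaces your appeal to separators. If you want a direct route that still avoids separators, you could instead argue that for any end-defining tree $T$ the canonical bijection $\partial T\to\partial X$ is a continuous bijection between compact Hausdorff spaces, hence a homeomorphism; the first assertion then follows from the ``in particular'' clause rather than the other way around.
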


	\begin{proof}
        \newcommand{\cP}{\mathcal{P}}
	    Write $\{E^{+},E^{-}\}$ for the partition of ends induced by $e$ in $T_1$.
        Fix a vertex $x$ on $T_1$ and $x_2$ on $T_2$
        Suppose for every ball $B_R$ with radius $R$ and center $x$, the partition of ends induced by it on $T_2$, indirectly given by the connected components of $T_2- B$, is not a subpartition, then there are ends $\varepsilon_i$, $\varepsilon_i'$ such that in $T_2- B_i$ they are in the same connected component, but $\varepsilon_i\in E^-$ and $\varepsilon_i'\in E^+$
        Now take rays $r_i, r_i'$ starting at $x$ (resp. $\tilde{r}_i$, $\tilde{r}_i'$ starting at $x_2$) in $T_1$ (resp. $T_2$), such that their limits are the ends $\varepsilon_i$ and $\varepsilon_i'$
        Note that since $\tilde{r}_i$ and $\tilde{r}_i'$ are in the same connected component, they overlap on their first vertices inside $B_i$
        This means that they have the same pointwise limit
        On the other hand consider $r_i$ and $\tilde{r}_i$
        Take any end $\varepsilon $ of $X$, then it corresponds to a choice of connected components of $X$ after taking away bigger and bigger compact sets (say for example the balls $B_R$)
        However since $r_i(\infty) = \tilde{r}_i(\infty)$, they are in the same connected component if one goes sufficiently far
        However this means that any consistent set of pointwise convergence accumulation points of $\{r_i\}_i$ and $\{\tilde{r}_i\}_i$ have the same end
        After restricting to a subsequence we can assume that both sequences converge pointwise
        Then, because of previous arguments, the end of $(\lim r_i)(\infty) = (\lim \tilde{r}_i)(\infty)$, which is equal to $(\lim \tilde{r}_i')(\infty)$  and thus equal to $(\lim r_i')(\infty)$
        However this is a contradiction, since the ends of $r_i$ are in $E^-$ and those of $r_i'$ are in $E^+$
        Since both kinds of infinite rays are separated by the edge $e$, also the ends of the limit rays are in these respective sets.
	\end{proof}

    The result in~\Cref{tree_injection} can actually be generalised to end-respecting trees \emph{without} separators. 
    \begin{proposition}
        Let $X$ be ULF and $T\leq X$ an end-respecting tree.
        Then:
        \[
            \Huf_1(T,\ZZ ) \leq  \Huf_1(X,\ZZ ).
        \]
        The same holds for $\ZZ_2$.
    \end{proposition}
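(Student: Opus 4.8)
The plan is to reduce the no-separator case to the with-separator case already proved, by noticing that the only place separators were used in the proof of \Cref{tree_injection} (equivalently \Cref{Z1T_and_Cycle_is_trivial}) was to produce, for a given edge $e\in\supp f$, a finite connected set $K$ whose removal leaves an infinite component $L$ with $\supp f\cap\partial L=\{e\}$; everything downstream of that (following $f$ backwards into $L$, isolating the circuits $C_L$ lying in $L$, and invoking \Cref{lemma:extending_rays_generalised} or \Cref{ray_to_bip_Z2} to derive a contradiction) is insensitive to how $K$ was obtained. So the task is: given an end-respecting tree $T\le X$ (no separators assumed), a nonzero $f\in\Zsuf_1(T,A)\cap\cC(X,A)$ witnessed by a thin family of circuits $C$, and an edge $e\in\supp f$, find such a $K$ and $L$.

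First I would fix an edge $e$ of $T$ in $\supp f$. Removing $e$ from $T$ splits $T$ into two subtrees; since $T$ is leafless and $f$ is a nonzero flow supported (after the leaf-pruning lemma) on a subtree without dead ends, the flow on $e$ is forced through, so at least one side of $e$ must carry the end-structure non-trivially — concretely, the set of ends of $T$ on one side, say $\mathcal E^-$, together with the fact that $T$ is end-respecting, tells us these ends are "seen" by $X$ only through the $L$-side. Now I would use \Cref{boundaryertrees} (or rather its method of proof) applied with $T_1=T$: the partition $\{\mathcal E^+,\mathcal E^-\}$ of $\partial X=\partial T$ induced by $e$ is realised by a large enough ball $B(v_T,R)$, i.e.\ there is a finite connected $K:=B(v_T,R)\cap VX$ (balls in a connected graph are connected) such that the infinite components of $X-K$ refine $\{\mathcal E^+,\mathcal E^-\}$ and such that $e$ separates the $\mathcal E^-$-components from the $\mathcal E^+$-components inside $T$. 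Because $T$ is end-respecting, each infinite branch of $T$ representing an end lands in a unique infinite component of $X-K$, and $e$ lies "between" $K$ and all the $\mathcal E^-$-components; choosing $R$ large enough that $e\in B(v_T,R)$, one of the infinite components $L$ on the far side of $e$ has the property that the only edge of $\partial_T(T\cap K)$ reaching the $\mathcal E^-$-side is $e$ — hence $\supp f\cap\partial_T L$ reduces to $e$ up to the finitely many edges inside $K$, which we absorb by enlarging $R$ so that $e$ is the unique edge of $\supp f$ crossing into $L$.

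With $K$ and $L$ in hand, the rest of the argument is verbatim that of \Cref{tree_injection}: orient $e$ away from $L$ with $f(e)=k>0$, follow $f$ backwards into $T\cap(L\cup\{e\})$ to get rays $r_1,\dots,r_k$ (just one ray $r_1$ in the $\ZZ_2$ case), set $C_L$ to be the circuits of $C$ lying in $L$ and disjoint from $K$, let $r$ be the appropriate sum of initial segments and $f_L$ the restriction of $f-\sum r_i$ to $L$, observe $r\le C_L-f_L$ with equality far from $K$, apply \Cref{lemma:extending_rays_generalised} (resp.\ \Cref{ray_to_bip_Z2}) to get $\hat r$ with $r\le\hat r\le C_L-f_L$ differing from $r$ on infinitely many edges, and derive the contradiction exactly as before. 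This yields $\Zsuf_1(T,A)\cap\cC(X,A)=0$ and hence the injection $\Huf_1(T,A)\cong\Zsuf_1(T,A)/\cdots\hookrightarrow\Zsuf_1(X,A)/\cC_\infty=\Huf_1(X,A)$.

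The main obstacle I anticipate is the step of producing $K$: without separators one no longer has a canonical sequence $(K_n)$ with the clean one-edge-per-infinite-component property, so one must argue that a single well-chosen ball suffices to isolate $e$ from all of $\supp f$ on the $L$-side. The subtlety is that $\supp f$ could a priori wander back and forth across many components of $X-K$; the resolution is that $f$ is supported in $T$, $T$ is a tree, and removing the edge $e$ already disconnects $T$ — so on the $L$-side $\supp f$ is confined to one subtree of $T$, and all that \Cref{boundaryertrees}'s method needs to guarantee is that this subtree of $T$ sits inside the union of the $\mathcal E^-$-components of $X-K$ with $e$ the unique crossing edge, which follows once $R$ is large enough that $B(v_T,R)$ realises the $e$-partition and contains the (finitely many, since the family is thin) circuits of $C$ that straddle $e$. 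Formalising "large enough $R$" so that simultaneously $e$ is isolated and the thin family behaves is the only place requiring care, and it is a routine compactness/finiteness argument of exactly the flavour already used repeatedly in \Cref{section:trees}.
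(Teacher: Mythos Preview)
There is a genuine gap in your production of the set $K$ and component $L$. You want an infinite component $L$ of $X-K$ such that \emph{exactly one} edge of $\supp f$ crosses $\partial L$, and you take $K=B(v_T,R)$ with $e\in K$. But once $e$ lies in the interior of $K$, it is not a crossing edge at all; and enlarging $R$ only pushes more of $T$ inside $K$ and creates \emph{more} crossing edges of $T$ along $\partial K$, not fewer. The statement ``the subtree $T^-$ sits inside the union of the $\mathcal E^-$-components of $X-K$ with $e$ the unique crossing edge'' is simply false in general: $T^-$ may have many branches, each carrying nonzero flow, and each of these branches contributes its own crossing edge into (possibly the same) component of $X-K$. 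Nothing in the end-respecting hypothesis prevents several such branches from entering a single component $L$. The separator axiom in \Cref{tree_injection} is precisely what rules this out, and balls in $X$ do not enjoy it. Your invocation of \Cref{boundaryertrees} is also off: that lemma compares two end-\emph{defining} trees, whereas here $T$ is only end-respecting and you have no second tree in play.

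The paper does not try to manufacture separators for $T$. Instead it introduces an auxiliary end-\emph{defining} tree $\tilde T\le X$ \emph{with} separators (these always exist), and transfers $f$ to a ``parallel'' function $\tilde f$ on $\tilde T$ by replacing each bip $b$ in the $l^\infty(P)$-decomposition of $f$ by the unique bip $\tilde b$ in $\tilde T$ with the same pair of ends. The difference $\tilde f-f$ is a (thin) sum of circuits, so one can correct $f$ by finitely many of these circuits near a given separator $K_i$ to obtain an $f'$ agreeing with $\tilde f$ there, and then run the tree\_injection argument from step~4 on $f'$ using the separators of~$\tilde T$. The real work is showing that $\tilde f$ is well-defined (this is where \Cref{boundaryertrees} is actually used, with $T_1=T$ and $T_2=\tilde T$) and nonzero; the latter requires a careful signed count over the refinement of the $e$-partition of ends induced by a large separator. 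Your plan misses this transfer step entirely, and without it the single-crossing-edge configuration cannot be arranged.
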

    \begin{proof}
        \newcommand{\tT}{\tilde T}
            \renewcommand{\tf}{\tilde f}
                \newcommand{\tb}{\tilde b}
                    \newcommand{\tP}{\tilde P}
                        \newcommand{\cP}{\mathcal{P}}
        
        Let $T\leq X$ such a tree, and $\tT$ an end-defining tree for $X$ with separators.
        \begin{itemize}
            \item
                We show that $\Hsuf_1(T,\ZZ)\cap \cC(X,\ZZ)=\{0\}$, and then the argument in the first step of the proof of~\Cref{tree_injection} gives the desired embedding.
            \item
                Towards a contradiction, assume that $f\in \Hsuf_1(T,\ZZ)\cap \cC$ is non-zero.
                By the decomposition of $\Hsuf_1(T,\ZZ)$ into $l^\infty (P)$ for a set of bips $P$, one can write
                \[
                    f = \sum_{b\in P} a_b\cdot b.
                \]
            \item
                For any bip $b\in P$, one can consider its “parallel” bip $\tb$ in the tree $\tT$; that is, $b$ has two ends corresponding to ends of the graph $X$, which correspond also to two ends in $\tT$ (since $\tT$ is end-defining), and finally to a unique bip.
                Let $\tP := \{\tb\ |\ b \in  P\}$ and 
                \[
                    \tf = \sum_{b\in P} a_b\cdot \tb.
                \]
                Note that by “projecting”, one can find a collection of circuits $C_f$ (and coefficients $a_c$) such that 
                \[
                    \tf = f + \sum_{c\in C_f} a_c\cdot c
                \]
                (the construction for $C_f$ is similar to the one in the proof of~\Cref{decomp_Z2}).
            \item
                We claim that $\tf$ is well-defined (but potentially unbounded) and non-zero.
            \item
                Assume for now that this is the case.
                Since $\tf$ is non-zero, there exists a first separator $K_i$ and an edge $e$ lying on $\partial K$ such that $\tf$ is zero on $K$ and non-zero on $e$.
            \item
                Let $C_f^K$ denote those circuits in $C_f$ that intersect the separator $K_{i+1}$ that follows $K_i$ and
                \[
                    f' := f - \sum_{c\in C_f^K}a_c\cdot c.
                \]
                %Since $C_f^K$ is finite, we have $f-f'\in \cC_\infty $.
                By construction, $f'$ is equal to $\tf$ inside of $K_{i+1}$, so in particular in $K_i$ and on $e$; hence $f'(e)\neq 0$.
            \item
                Following the argument of~\Cref{tree_injection} starting from step 4 with function $f'$ shows that $f'$ cannot be non-zero, a contradiction.
        \end{itemize}
        It remains to verify that $\tf$ is actually well-defined and non-zero. Well-defined is a consequence of Lemma \ref{boundaryertrees}.

        Assume that $\tf=0$, and let $e$ some edge of $T$ on which $f$ is non-zero.
        \begin{itemize}
            \item
                Let $E^{+}\sqcup E^{-}$ the partition of the ends of $T$ following the side of $e$ on which they lie.
                By Lemma \ref{boundaryertrees}  there exists a large enough separator $K_i$ in $X$ that the induced partition on ends is a subpartition of $\{E^{+},E^{-}\}$
                
                Let then $K$ such a separator and $\{E_1^{+},\ldots,E_n^{+}\}\cup \{E_1^{-},\ldots,E_m^{-}\}$ the subpartition induced by $K$.
                Let also $e_1^{+},\ldots,e_n^{+}, e_1^{-},e_m^{-}$ the edges in the boundary of $K$, with corresponding signs.
                The ends in $E_i^\pm $ are then exactly those in the component of $X-K$ corresponding to $e_i^\pm $.
        \end{itemize}
        For a collection of bips $B$, let $a_B := \sum_{b\in B} a_b$. If we note $b^{-1}$ for the bip $b$ with opposite orientation, then $a_{b^{-1}} =  -a_b$. For a collection of oriented bips $B$ and two disjoint sets of ends $E_1,E_2$, let $[E_1,E_2]$ be $\{ b\in B\;\lvert \; b(-\infty)\in E_1 \text{ and } b(+\infty) \in E_2 \}\sqcup \{ b^{-1} \;\lvert \; b\in B \text{ and } b(-\infty)\in E_2 \text{ and } b(+\infty) \in E_1 \}$. In words, the bips having an end in $E_1$ and in $E_2$, taken with consistent  orientation. 
        \begin{itemize}
            \item
                Since $\tf=0$, we have $\tf(e_i^s)=0$ for all $i$ and $s=\pm $.
                This implies
                \[
                    a_{[E^{+}_i,(E^{+}_i)^\complement ]} = 0
                \]
            \item
                We also have:
                \begin{alignat*}{10}
                    f(e)    &= a_{[E^{-},E^{+}]}, \\
                    \intertext{and using $[E^{-},E^{+}] = \bigsqcup_{i=1}^m [E_i^{-},E^{+}]$:}
                            &= \sum_{i=1}^m a_{[E_i^-,E^+]} \\
                            &= \sum_{i=1}^m a_{[E_i^-,E^+]} + 0, \\
                            \intertext{now, using the fact that $a_{[E_j^{-},E_j^{+}]} = - a_{[E_j^{+},E_j^{-}]}$ for any $j$:}
                            &= \sum_{i=1}^m \left( a_{[E_i^-,E^+]} + \sum_{j=1,j\neq i}^m \overbrace{  a_{[E_i^-,E_j^-]}   + a_{[E_j^-,E_i^-]}}^{=0}\right)\\
                            &= \sum_{i=1}^m \left(\left( a_{[E_i^-,E^+]} + \sum_{j=1,j\neq i}^m  a_{[E_i^-,E_j^-]} \right)  + \sum_{j=1,j\neq i}^ma_{[E_j^-,E_i^-]}\right),\\
                            \intertext{and now by the decomposition $[E_i^{-},(E_i^{-})^\complement ] = [E_i^{-},E_i^{+}] \sqcup  \bigsqcup_{j=1,j\neq i}^m [E_i^{-},E_j^{-}]$ for any $i$:}
                            &= \sum_{i=1}^m \left(\left( a_{[E_i^-,(E_i^{-})^\complement ]} \right)  + \sum_{j=1,j\neq i}^ma_{[E_j^-,E_i^-]}\right)\\
                            \intertext{and the content of the inner parenthesis vanishes since $\tf(e_i^-)=0$:}
                            &= \sum_{i=1}^m \sum_{j=1,j\neq i}^ma_{[E_j^-,E_i^-]}\\
                            &= \sum_{1\leq i\neq j\leq m} a_{[E_j^-,E_i^-]} + a_{[E_i^{-},E_j^{-}]}\\
                            \intertext{and finally, again since inverting directions negates the sign:}
                            &= 0.                            
                \end{alignat*}
            
                It follows therefore that $\tf=0$ implies $f(e)=0$, which is a contradiction.
        \end{itemize}

    \end{proof}

	%auto-ignore

\section{Coefficients in $\ZZ_2$}\label{section:Z2}

    In this section, we study the special case of $\Huf_1(\cdot ,\ZZ_2)$, or “uniformly finite homology with coefficients in $\ZZ_2$”.
    In that case, the “uniformly finite” condition becomes vacuous, and one may wonder if any insight can be gained with such a restriction.
    This is indeed the case, as the next proposition shows, thus motivating the study of $\Huf_1(\cdot ,\ZZ_2)$.

\subsection{$\Huf_1(X,\ZZ) \twoheadrightarrow  \Huf_1(X,\ZZ_2)$}

    \begin{proposition}
        Let $X$ a (ULF) graph.
        Then $\Huf_1(X,\ZZ_2)$ is a quotient of $\Huf_1(X,\ZZ)$.
    \end{proposition}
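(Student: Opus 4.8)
The plan is to exhibit the quotient map explicitly, at the level of cycles, using the ``hands-on'' description of $\Huf_1$. Reduction of coefficients $\ZZ \to \ZZ_2$ is a ring homomorphism, hence induces chain maps $\Csuf_*(R_r X,\ZZ)\to \Csuf_*(R_r X,\ZZ_2)$ compatible with the directed system $(R_r X)_r$; passing to the limit and then to homology yields a homomorphism $\rho\colon \Huf_1(X,\ZZ)\to\Huf_1(X,\ZZ_2)$. Under the isomorphisms $\Huf_1(X,A)\cong \Zsuf_1(X,A)/\cC_\infty(X,A)$ of \Cref{H1uf_rewritten}, $\rho$ is just the map induced by the coordinatewise reduction $\Zsuf_1(X,\ZZ)\to \Zsuf_1(X,\ZZ_2)$: it descends to the quotients because a uniformly bounded integer sum of circuits of length $\leq r$ reduces to a uniformly bounded $\ZZ_2$-sum of such circuits, so $\cC_r(X,\ZZ)$ maps into $\cC_r(X,\ZZ_2)\subseteq \cC_\infty(X,\ZZ_2)$. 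Thus it suffices to show that the reduction $\Zsuf_1(X,\ZZ)\to \Zsuf_1(X,\ZZ_2)$ is surjective; surjectivity of $\rho$ then follows from \Cref{H1uf_rewritten}.

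To prove this surjectivity I would lift a $\ZZ_2$-flow to a $\ZZ$-flow using the decomposition of flows into circuits and bips recalled in \Cref{section:preliminaries}. Fix $f\in\Zsuf_1(X,\ZZ_2)$. A coordinatewise lift of $f$ to an integer-valued function on edges is in general not a cycle, so instead write, as in the preliminaries, $f=\sum_{i\in I}\sigma_i$ in $\Csuf_1(X,\ZZ_2)$ for some family $(\sigma_i)_{i\in I}$ of oriented circuits and bips in $X$ with a uniform bound $K$ on the number of them passing over any given edge. Now regard each $\sigma_i$ as an element of $\Csuf_1(X,\ZZ)$ with coefficient $+1$ (its orientation is immaterial modulo $2$), and set $\tilde f:=\sum_{i\in I}\sigma_i\in\Csuf_1(X,\ZZ)$. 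Then $\|\tilde f\|_\infty\leq K$, so $\tilde f$ really lies in $\Csuf_1(X,\ZZ)=l^\infty(EX,\ZZ)$; each $\sigma_i$ is an integer cycle (flow in equals flow out at every vertex, both for circuits and for bi-infinite paths), so $\partial\tilde f=0$ and $\tilde f\in\Zsuf_1(X,\ZZ)$; and the reduction of $\tilde f$ modulo $2$ is $\sum_{i\in I}\sigma_i=f$. Hence $\tilde f$ is the required lift, and $\rho$ is surjective, which is the claim.

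The one place that needs care is this lifting step: naively lifting $f$ edge by edge produces something that is a cycle only modulo $2$, and the whole point of routing the lift through the circuit/bip decomposition is that the uniform bound $K$ on multiplicities makes the lift uniformly bounded, hence a bona fide uniformly finite chain. Everything else is formal functoriality of coefficient reduction together with \Cref{H1uf_rewritten}.
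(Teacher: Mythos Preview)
Your proof is correct and follows essentially the same route as the paper: both reduce to showing surjectivity of $\Zsuf_1(X,\ZZ)\to\Zsuf_1(X,\ZZ_2)$ via a circuit/bip decomposition and lift, then observe that $\cC_\infty(X,\ZZ)$ maps to $\cC_\infty(X,\ZZ_2)$. The only cosmetic difference is that the paper uses the stronger fact that over $\ZZ_2$ the circuits and bips in the decomposition can be taken with pairwise \emph{disjoint} support (giving a lift of norm $1$), whereas you use the weaker bounded-multiplicity version; either suffices.
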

    \begin{proof}
        Consider the natural map:
        \[
            \Phi : \Zsuf_1(X,\ZZ) \ovs{q_{/2}}\rightarrow  \Zsuf_1(X,\ZZ_2) \ovs{\pi }\twoheadrightarrow  \Zsuf_1(X,\ZZ_2)/\cC_\infty (X,\ZZ_2).
        \]
        The first arrow is surjective.
        Indeed, if $f:EX\rightarrow \ZZ_2$ lies in $\Zsuf_1(X,\ZZ_2)$, one can “follow paths” and write 
        \[
            f = \sum C + \sum B,
        \]
        for $C$ a set of circuits and $B$ a set of bips, such that the elements of $B\cup C$ have pairwise disjoint support.
        Choosing signs arbitrarily, we can lift $f$ to $\tf \in  \Zsuf_1(X,\ZZ)$ by lifting each element of $C$ and $B$.

        To get a surjection $\Huf_1(X,\ZZ) \twoheadrightarrow  \Huf_1(X,\ZZ_2)$, it remains to check that $\cC_\infty (X,\ZZ)$ is contained in $\ker \Phi $.
        But if $\sum C \in  \cC_\infty (X,\ZZ)$ is given, then the image of $\sum C$ through $q_{/2}$ is in $\cC_\infty (X,\ZZ_2)$.

    \end{proof}
    
%    \begin{corollary}
%        $\Huf_1(X,\ZZ)$ is at least as “large” as $\Huf_1(X,\ZZ_2)$.
%    \end{corollary}

%\subsection{$\Huf_0(X,\ZZ_2)$}

\subsection{Decomposition of $\Huf_1(X,\ZZ_2)$}

    The main result of the section is the following:

    \begin{proposition}\label{decomp_Z2}
        Let $X$ be a (ULF) graph, and $T\leq X$ an end-defining subtree.
        Then:
        \[
            \Huf_1(X,\ZZ_2) \cong  \Huf_1(T,\ZZ_2) \oplus  \frac{\cC(X,\ZZ_2)}{\cC_\infty (X,\ZZ_2)}.
        \]
    \end{proposition}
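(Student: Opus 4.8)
We have $\Huf_1(X,\ZZ_2) \cong \Zsuf_1(X,\ZZ_2)/\cC_\infty(X,\ZZ_2)$ by Proposition \ref{H1uf_rewritten}. Inside this quotient sits the subspace $\cC(X,\ZZ_2)/\cC_\infty(X,\ZZ_2)$ — this is legitimate since $\cC_\infty \subseteq \cC \subseteq \Zsuf_1$ are vector subspaces over $\ZZ_2$. So we want to exhibit a complement isomorphic to $\Huf_1(T,\ZZ_2) \cong \Zsuf_1(T,\ZZ_2)$ (the latter iso because $T$ is a tree). The natural candidate for the complement is the image of $\Zsuf_1(T,\ZZ_2)$ under the inclusion $\Zsuf_1(T)\hookrightarrow\Zsuf_1(X)\twoheadrightarrow\Zsuf_1(X)/\cC_\infty(X)$. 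Thus the proof splits into two parts: (i) this map is injective, i.e. $\Zsuf_1(T,\ZZ_2)\cap\cC_\infty(X,\ZZ_2)=0$; and (ii) $\Zsuf_1(T,\ZZ_2)$ together with $\cC(X,\ZZ_2)$ span all of $\Zsuf_1(X,\ZZ_2)$, and their images in $\Zsuf_1(X)/\cC_\infty(X)$ intersect trivially.

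For (i), this is essentially Corollary \ref{Z1T_and_Cycle_is_trivial} (the $\ZZ_2$ version), since $\cC_\infty(X,\ZZ_2)\subseteq\cC(X,\ZZ_2)$ and $\Zsuf_1(T)\cap\cC(X)=0$; note $T$ end-defining is in particular end-respecting, and for the argument of Proposition \ref{tree_injection} in the $\ZZ_2$ case we'd want separators, but since $T$ is end-defining, separators can be built as in the remark following the definition — or one invokes the separator-free version. For the triviality of the intersection of images in the quotient, suppose $f\in\Zsuf_1(T,\ZZ_2)$ has image in $\cC(X)/\cC_\infty(X)$, i.e. $f\in\Zsuf_1(T)+\cC_\infty$ meets $\cC$; then $f = c + c_\infty$ with $c\in\cC$, $c_\infty\in\cC_\infty\subseteq\cC$, so $f\in\cC$, whence $f\in\Zsuf_1(T)\cap\cC = 0$ by part (i). So the sum of the two subspaces (images) in $\Zsuf_1(X,\ZZ_2)/\cC_\infty(X,\ZZ_2)$ is direct.

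For (ii), the decomposition claim: given any $f\in\Zsuf_1(X,\ZZ_2)$, decompose it by "following paths" into a disjoint-support sum $f = \sum B + \sum C$ of bips $B$ and circuits $C$ (the standard $\ZZ_2$ decomposition from the preliminaries; $\sum C\in\cC(X,\ZZ_2)$ since the family is thin). Each bip $b\in B$ has two ends of $X$, realized by a unique bip $\tilde b$ in $T$ since $T$ is end-defining; set $f_T := \sum_{b\in B}\tilde b \in \Zsuf_1(T,\ZZ_2)$ (this is a locally finite sum — one must check, using that only finitely many bips pass near any fixed finite region, which follows from the uniform bound on how many of the original bips cross a given edge plus the tree structure of $T$). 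Then $b + \tilde b$ is a cycle with both ends "cancelled", hence a $\ZZ_2$-cycle with no ends, which can itself be written as a thin sum of circuits; summing over $b\in B$ gives $\sum B + f_T \in \cC(X,\ZZ_2)$. Therefore $f = f_T + \big((\sum B + f_T) + \sum C\big)$ with the second term in $\cC(X,\ZZ_2)$, proving that $\Zsuf_1(X,\ZZ_2) = \Zsuf_1(T,\ZZ_2) + \cC(X,\ZZ_2)$ at the level of the whole space (hence a fortiori modulo $\cC_\infty$). Combining with (i) gives the direct sum, and quotienting: the first summand maps isomorphically onto $\Huf_1(T,\ZZ_2)$ and the second onto $\cC(X,\ZZ_2)/\cC_\infty(X,\ZZ_2)$.

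The main obstacle I expect is making rigorous the "projection to $T$" construction — showing that $f_T = \sum_{b\in B}\tilde b$ is a well-defined locally finite (hence bounded, automatic over $\ZZ_2$) element of $\Zsuf_1(T,\ZZ_2)$, and that $\sum B + f_T$ really is a thin sum of circuits supported in $X$. The first needs Lemma \ref{boundaryertrees}-type control: the partition of ends induced by a finite $K\subset X$ refines the one induced by $T$, so only finitely many of the (disjoint-support, uniformly-bounded-multiplicity) bips in $B$ cross $K$, hence only finitely many $\tilde b$ pass through any fixed edge of $T$. The second is the observation that a $\ZZ_2$-cycle all of whose "end components" cancel is a thin sum of circuits — one follows paths in $b+\tilde b$ and no bip can remain. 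These are the steps the paper flags as "similar to the proof of \Cref{decomp\_Z2}" being used elsewhere, so here they must be spelled out.
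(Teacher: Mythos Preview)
Your overall strategy matches the paper's: reduce to $\Zsuf_1(T,\ZZ_2)\cap\cC(X,\ZZ_2)=0$ (via \Cref{Z1T_and_Cycle_is_trivial}) and $\Zsuf_1(T,\ZZ_2)+\cC(X,\ZZ_2)=\Zsuf_1(X,\ZZ_2)$, decompose a given $f$ into edge-disjoint bips and circuits, and push each bip $b$ onto its tree companion $\tilde b$.

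There is, however, a genuine gap in your spanning argument. Your justification that $b+\tilde b\in\cC$ is ``one follows paths in $b+\tilde b$ and no bip can remain'', and this is simply false: when $b$ and $\tilde b$ are edge-disjoint (the generic situation), following paths recovers exactly the two bips $b$ and $\tilde b$. The element $b+\tilde b$ \emph{does} lie in $\cC$, but not because its path-decomposition is bip-free; one must \emph{exhibit} a thin family of circuits summing to it. The paper does this via a nested sequence of balls $B(v_T,R_i)$: at stage $i$ one connects the outgoing rays of $b$ to those of $b^*$ by short paths outside the current ball, producing ``ring'' circuits, and the nesting forces the total family of circuits (over all bips simultaneously) to be locally finite. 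Relatedly, you do not treat bips whose two ends coincide (the paper's $B_=$): for such $b$ there is no companion bip in $T$, so one must show directly that $b\in\cC$, which again needs the same nested-ball construction. Once each $b+\tilde b\in\cC$ is established, your route via thinness of the family $\{b+\tilde b\}_b$ and thin-sum closure of $\cC$ would finish the proof --- but the explicit circuit construction is the actual content here, not a detail to be waved away.
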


    \begin{proof}
        Drop the $\ZZ_2$.
        Recall that $\Huf_1(X) = \Zsuf_1(X)/\cC_\infty $ and $\Huf_1(T) \cong  \Zsuf_1(T)$.
        Furthermore, both $\Zsuf_1(T)$ and $\cC$ lie in $\Zsuf_1(X)$.
        Since $\cC_\infty  \leq  \cC$, it suffices therefore to show that $\Zsuf_1(T) \cap  \cC = 0$ and $\Zsuf_1(T) + \cC = \Zsuf_1(X)$.
        We already know, by~\Cref{Z1T_and_Cycle_is_trivial} that $\Zsuf_1(T)\cap \cC = 0$.
        It remains therefore to show $\Zsuf_1(T)\cap \cC = \Zsuf_1(X)$.

        Fix $f \in  \Zsuf_1(X)$.
        We will find a thin set of circuits $D$ such that $f - \sum D$ is supported on $T$; in some sense “pushing” $f$ towards $T$ using circuits.

        By “following paths”, one can write:
        \[
            f = \sum_{\varepsilon {\neq}\varepsilon '}\sum_{b\in B_{\varepsilon ,\varepsilon '}}b + \sum_{b\in B_=}b + \sum_{c\in C}c
        \]
        with $B_{\varepsilon ,\varepsilon '}$ a set of bips going from end $\varepsilon $ to end $\varepsilon '$, $B_=$ a set of bips going to the same ends, and $C$ a set of circuit, and so that no edge is shared between elements of $\bigcup_{\varepsilon {\neq}\varepsilon '} B_{\varepsilon ,\varepsilon '} \cup  B_= \cup  C$ -- the sum is therefore (very!) thin.
        
        If $B_{\varepsilon ,\varepsilon '}$ is finite, let, for any $b\in B_{\varepsilon ,\varepsilon '}$, $b^*$ denote the unique bip in $T$ with same ends.
        If $B_{\varepsilon ,\varepsilon '}$ is infinite, let $B_{\varepsilon ,\varepsilon '}^{+}\sqcup B_{\varepsilon ,\varepsilon '}^{-} = B_{\varepsilon ,\varepsilon '}$ be a partition into two infinite parts, and choose a bijection $B_{\varepsilon ,\varepsilon '}^{+} \rightarrow  B_{\varepsilon ,\varepsilon '}^{-}$ denoted as sending $b\in B_{\varepsilon ,\varepsilon '}^{+}$ to $b^* \in  B_{\varepsilon ,\varepsilon '}^{-}$.
        Denote also the inverse of this bijection with ${\cdot}^*$, so that $(b^*)^* = b$.

        Fix some $R_0>0$, and let $B_{\varepsilon ,\varepsilon '}^0,B_=^0, C^0$ be the elements of each set ($B_{\varepsilon ,\varepsilon '},B_=,C$) that intersect $B(v_T,R_0)$.
        By ULF, each of those sets is finite, and only a finite number of pairs $\varepsilon {\neq}\varepsilon '$ have $B_{\varepsilon ,\varepsilon '}^{0}$ non-empty.
        For any $b\in B_{\varepsilon ,\varepsilon '}^{0}$, choose two paths $p_{b,0}^{+},p_{b,0}^{-}$ connecting the two rays of $b-B(v_T,R_0)$ outside of $B(v_T,R_0)$ to $b^*$.
        Similarly, for any $b\in B_=^0$, find a path $p_{b,0}$ joining the two rays of $b$ outside of $B(v_T,R_0)$, say $p_{b,0}$ connects $b(t_{b,0}^{+})$ to $b(t_{b,0}^{-})$.
        Let $R_1$ large enough that $B(v_T,R_1)$ contains each of the path used above and each element of $C^0$, let $B_{\varepsilon ,\varepsilon '}^1,B_=^1,C^1$ be the elements of each set that intersect $B(v_T,R_1)$ and repeat the above.
        
        This defines circuits, say $c_{b}^i$ such that:
        \[
            \sum c_b^i = b - b^*
        \]
        if  $b\in B_{\varepsilon ,\varepsilon '}$, and
        \[
            \sum c_b^i = b,
        \]
        if $b\in B_=$.
        Furthermore, at each step $i$, the circuits added do not intersect $B(v_T,R_{i-1})$, except for the segments corresponding to $b^*$.
        Since those segments appear in exactly one circuit in the case of infinite $B_{\varepsilon ,\varepsilon '}$, and in a finite number of circuits in the case of finite $B_{\varepsilon ,\varepsilon '}$, the sum of circuits is thin, and we have successfully pushed $f$ to the tree.

    \end{proof}

    \begin{lemma}
        If there exists $R$ such that any circuit can be written as a $\ZZ_2$ \its\  of circuits of length $\leq R$, then $\cC(X,{\ZZ}_2) = \cC_R(X,{\ZZ}_2)$.
    \end{lemma}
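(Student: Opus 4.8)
The goal is to show that under the hypothesis — every circuit is a $\ZZ_2$-\its{} of circuits of length $\leq R$ — we get $\cC(X,\ZZ_2) = \cC_R(X,\ZZ_2)$. Since $\cC_R \subseteq \cC$ always, only the reverse inclusion needs work. So fix an arbitrary element $g \in \cC(X,\ZZ_2)$, i.e.\ $g = \sum_{i\in I} c_i$ for a thin (locally finite) family of circuits $(c_i)_{i\in I}$ with $g$ uniformly bounded (automatic over $\ZZ_2$). I want to rewrite $g$ as a thin sum of circuits each of length $\leq R$.

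The first step is to apply the hypothesis to each individual circuit $c_i$: write $c_i = \sum_{j\in J_i} d_{i,j}$ where each $d_{i,j}$ is a circuit of length $\leq R$ and the family $(d_{i,j})_{j\in J_i}$ is locally finite (thin). Then formally $g = \sum_{i\in I}\sum_{j\in J_i} d_{i,j}$, and the whole doubly-indexed family $(d_{i,j})$ consists of circuits of length $\leq R$; the remaining issue is to check that this combined family is still thin, i.e.\ locally finite on edges. This is where care is needed: an edge $e$ is covered by $d_{i,j}$ only if $e$ is within distance $\leq R$ (in fact within the diameter of $d_{i,j}$, which is $\leq R/2$) of some edge in the support of $c_i$; and the family $(c_i)$ is locally finite, so only finitely many $c_i$ have support meeting the $R$-ball around $e$ — call this finite set $I_e$. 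For $i \notin I_e$, no $d_{i,j}$ can cover $e$. For each of the finitely many $i \in I_e$, the family $(d_{i,j})_{j}$ is thin, so only finitely many $d_{i,j}$ cover $e$. Hence only finitely many $d_{i,j}$ cover $e$ in total, so the combined family is thin, and $g = \sum_{i,j} d_{i,j} \in \cC_R(X,\ZZ_2)$.

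\textbf{Main obstacle.} The one genuine subtlety is the local-finiteness bookkeeping in the combined family: one must pin down precisely why a small circuit $d_{i,j}$ appearing in the decomposition of $c_i$ is "localised" near $\supp c_i$. The cleanest way is to observe that $d_{i,j}$ has diameter $\leq R/2$ and that $\sum_j d_{i,j} = c_i$ forces $\supp d_{i,j}$ to lie within distance $R/2$ of $\supp c_i$ — otherwise a "boundary" edge of $\supp d_{i,j}$ far from $\supp c_i$ would receive nonzero flow in $\sum_j d_{i,j}$ but zero in $c_i$, using that the small circuits other than $d_{i,j}$ cannot all cancel it if they are themselves confined to nearby regions; more simply, any edge in $\supp d_{i,j}$ is connected within $d_{i,j}$ (length $\leq R$) to some edge that survives in the sum, hence to $\supp c_i$. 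Once that localisation is recorded, the counting argument above goes through verbatim, and the proof is complete.
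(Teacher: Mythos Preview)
Your localization step---the claim that each small circuit $d_{i,j}$ in the decomposition $c_i = \sum_j d_{i,j}$ must have support within distance $R/2$ of $\supp c_i$---is where the argument breaks. Nothing in the hypothesis prevents a thin decomposition of $c_i$ from including small circuits supported arbitrarily far from $c_i$, provided those far-away circuits cancel among themselves over $\ZZ_2$. Your first justification is circular (it assumes the other $d_{i,j'}$ are already confined nearby in order to conclude that $d_{i,j}$ is), and your second (``some edge of $d_{i,j}$ survives in the sum, hence lies in $\supp c_i$'') is false in general: whenever a subfamily of small circuits is $\ZZ_2$-dependent---for instance the four triangles in any copy of $K_4$ sitting far from $c_i$---it can be adjoined to the decomposition without any edge of any member surviving. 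One can try to pass to a minimal sub-decomposition, but you have not shown that a minimal thin decomposition of a single circuit is confined to a \emph{uniform} neighbourhood of that circuit, and without uniformity in $i$ the combined family $(d_{i,j})_{i,j}$ need not be thin.

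The paper's one-word proof (``compactness'') sidesteps localization entirely. The space $\cC_R(X,\ZZ_2)$ is the image of the compact space $\ZZ_2^{\fC_R}$ under the map $\fe$, which is continuous for the product topologies because (by ULF) only finitely many circuits of length $\leq R$ pass through any given edge; hence $\cC_R$ is compact, hence closed in $\ZZ_2^{EX}$. Now given $g = \sum_{i\in I} c_i \in \cC$, each $c_i$ lies in $\cC_R$ by hypothesis, so each finite partial sum lies in $\cC_R$, and by thinness the partial sums converge pointwise to $g$; closedness gives $g \in \cC_R$. Equivalently, as the paper notes later, the \its\ span of any collection is itself closed under \its s, so once every circuit lies in $\cC_R$, so does every \its\ of circuits.
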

    \begin{proof}
        Using compactness.
    \end{proof}

    \begin{corollary}
        For a finitely presented group $G$:
        \[
            \Huf_1(G,\ZZ_2) \cong  \Huf_1(T,\ZZ_2),
        \]
        which has dimension $\#\text{ends}(G) - 1$.
    \end{corollary}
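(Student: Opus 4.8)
The plan is to combine the two preceding results, namely \Cref{decomp_Z2} and the lemma stating that a uniform bound $R$ on circuit length forces $\cC(X,\ZZ_2) = \cC_R(X,\ZZ_2) = \cC_\infty(X,\ZZ_2)$. First I would recall that a finitely presented group $G$, equipped with a word metric coming from a finite generating set, acts on its Cayley graph $X$, and finite presentedness gives a uniform constant $R$ (roughly the length of the longest relator, after also accounting for conjugates needed to kill triangles of the Rips complex) such that every circuit in $X$ is a $\ZZ_2$-\its\ of circuits of length $\leq R$: this is precisely the statement that the relators (together with bounded-length ``filling'' circuits) generate the cycle space, and one invokes the preceding lemma to conclude $\cC(X,\ZZ_2) = \cC_\infty(X,\ZZ_2)$.

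With that in hand, the quotient $\cC(X,\ZZ_2)/\cC_\infty(X,\ZZ_2)$ appearing in \Cref{decomp_Z2} is trivial, so the decomposition collapses to $\Huf_1(G,\ZZ_2) \cong \Huf_1(X,\ZZ_2) \cong \Huf_1(T,\ZZ_2)$ for any end-defining subtree $T\leq X$; QI-invariance of $\Huf$ lets us pass between $G$ and $X$ freely and makes the statement independent of the chosen generating set. For the dimension count, I would apply \Cref{theorembips}: $\Huf_1(T,\ZZ_2) \cong l^\infty(P)$ where $P$ is the constructed set of bips, and by the remarks following the tree construction the cardinality of $P$ equals the number of ends of $T$ minus one in the finitely-many-ends case (and the appropriate infinite cardinal otherwise). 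Since $T$ is end-defining, its ends are in bijection with the ends of $X$, hence with the ends of $G$, giving $\dim \Huf_1(G,\ZZ_2) = \#\mathrm{ends}(G) - 1$.

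The main obstacle is justifying the first step cleanly: one must check that for a finitely presented group the hypothesis of the lemma genuinely holds, i.e.\ that there is a single $R$ working for \emph{all} circuits simultaneously as a \emph{thin} sum. The content is that in the universal cover of the presentation complex every loop bounds a van Kampen diagram, and translating this into the graph language one writes any circuit as a locally finite sum of boundary circuits of $2$-cells (relators) and of the bounded ``triangulating'' circuits implicit in the clique/Rips structure; uniform local finiteness of $X$ ensures the sum stays thin and the coefficients bounded. Once this is granted, everything else is a direct substitution into the already-established results, so the corollary follows essentially formally.
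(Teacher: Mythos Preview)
Your proposal is correct and follows essentially the same route as the paper's proof: invoke the preceding lemma together with the finite-presentation fact to collapse the quotient in \Cref{decomp_Z2}, then read off the dimension from \Cref{theorembips}. The paper's argument is a one-line citation of exactly these three ingredients; your version simply unpacks the justification for the uniform $R$ in somewhat more detail.
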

    \begin{proof}
        By~\Cref{decomp_Z2}, the above lemma and the fact that any circuit in the Cayley graph of $G$ can be written as a finite sums of relators.
    \end{proof}

\subsection{Large circuits and dimensions}

    Our goal in this section is to show that, intuitively, if a graph has circuits of increasing length that can not be decomposed into smaller ones, then $\ZZ_2$-homology cannot vanish, and will in fact be infinite dimensional.
    
    Recall that a collection of elements of $\ZZ_2^{EX}$ is said to be \emph{thin} if it is locally finite.
    The sum of the collection (taken pointwise) is then well-defined, and we already defined:
    \begin{alignat*}{10}
        \cC   &:= \{\text{``thin sum of circuits''}\}, \\
        \cC_R &:= \{\text{``thin sum of circuits of length $\leq R$''}\}, \\
        \cC_\infty  &:= \cup_R \cC_R.
    \end{alignat*}

    The following are equivalent:
    \begin{enumerate}
        \item
            There exists an $R>0$ such that any circuit can be written as a \its\ sum of circuits of length $\leq R$.
        \item
            $\cC = \cC_R$.
        \item 
            For all $R'>R$, $\cC_{R'} = \cC_R$.
    \end{enumerate}
    If those do not hold, we say that $X$ has \emph{large circuits}.
    
    For the sake of clarity, we will settle on the following notation: 
    \begin{itemize}
        \item
            A \its\ is a well-defined sum of a potentially infinite thin collection $A \subseteq  \ZZ_2^{EX}$.
        \item
            There is an obvious notion of being closed under \its s, and a vector subspace of $\ZZ_2^{EX}$ is said to be a \its\ space if it is closed under \its s.
        \item
            The \its-span of a collection $A$ is the set:
            \[
                \{\sum B\ |\ B \text{ a thin subset of } A\}.
            \]
        By compactness, the \its\ span of a collection is closed under \its s (see~\cite{BG}).
        \item
           A collection $A$ is \its-independent if no two distinct thin subcollections of $A$ yield the same sum; equivalently, no thin subcollection has sum $0$.
        \item
            A \its\ basis is a collection that is \its-independent and a \its\ basis for a given \its\ space.
            Note that we do not require a \its\ basis $A$ for a \its\ space $\cW$ to be thin itself, but if it is, we get a bijection
            \[
                \ZZ_2^A \leftrightarrow  \cW. 
            \]
    \end{itemize}
    
    With those clarifications in mind, we see that each $\cC_R$ is a \its\ space, as is $\cC$ (as, respectively, the \its\ span of circuits of length $\leq R$, and all circuits), but $\cC_\infty $ is not necessarily.
    Finally, all the concepts above still make sense if we replace $\ZZ_2^{EX}$ by $\ZZ_2^{W}$ for any countable set $W$.

    We will show that as soon as $X$ has large circuits, then  $\Huf_1(X,{\ZZ}_2)$ is infinite-dimensional.

    To that end, we first construct a set of circuits in $X$ that is locally finite (thin) \emph{and} a \its\ basis for $\cC$.

    \begin{proposition}[$\cC$ has thin basis]\label{prop:cC_has_thin_basis}
        There exists a set $\cB$ of circuits that is a thin \its\ basis for $\cC$.
    \end{proposition}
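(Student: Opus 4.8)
The plan is to build $\cB$ greedily by "peeling off" circuits one edge at a time, using a fixed enumeration of the edge set $EX$ to control thinness. Fix an enumeration $e_1, e_2, \dots$ of $EX$. The idea is that the circuits we put into $\cB$ will be indexed by (some of) the edges: to each edge $e_k$ we will associate at most one circuit $c_k$ such that $c_k$ passes through $e_k$ but through no edge $e_j$ with $j < k$, and such that the collection $\{c_k\}$ \its-spans $\cC$. Local finiteness then comes almost for free: a given edge $e_m$ can lie on $c_k$ only for $k \le m$, so at most $m$ circuits of $\cB$ pass through $e_m$, which is the definition of thinness (in fact we even get that $e_m$ is covered by at most $m$ elements of $\cB$, so the collection is thin).

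The construction proceeds by recursion on $k$. Suppose $c_1, \dots, c_{k-1}$ have been chosen. Consider the subspace $\cW_{k-1}$ of $\ZZ_2^{EX}$ which is the \its-span of $\{c_1,\dots,c_{k-1}\}$, and let $\cC^{(k)}$ denote the \its-span of all circuits that avoid $e_1,\dots,e_{k-1}$. One checks (using that in $\ZZ_2$ a sum of circuits "following paths" can be re-decomposed) that $\cC = \cW_{k-1} \oplus \cC^{(k)}$ as \its-spaces — more precisely, every element of $\cC$ is uniquely a \its-sum of a subcollection of $\{c_1,\dots,c_{k-1}\}$ plus an element of $\cC^{(k)}$; this is the key lemma and I will prove it by induction on $k$, using at stage $k$ the fact that any circuit through $e_{k-1}$ differs from $c_{k-1}$ (if defined) by circuits avoiding $e_{k-1}$, since in $\ZZ_2$ one can symmetric-difference two circuits through a common edge and re-split the result into circuits. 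Now look at the set of edges that are covered by some circuit in $\cC^{(k)}$ (equivalently, the union of supports of circuits avoiding $e_1,\dots,e_{k-1}$); if $e_k$ is among them, pick any circuit $c_k \in \cC^{(k)}$ through $e_k$, and if not, set $c_k$ undefined and move on.

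Having run the recursion through all $k$, set $\cB = \{c_k : c_k \text{ defined}\}$. Thinness is the edge-counting argument above. For \its-independence and the spanning property: given $f \in \cC$, process edges in order — when we reach the least-indexed edge $e_{k_1}$ in $\supp f$, the decomposition $\cC = \cW_{k_1 - 1}\oplus \cC^{(k_1)}$ together with the fact that $f$ avoids $e_1,\dots,e_{k_1-1}$ forces $f \in \cC^{(k_1)}$; since $f(e_{k_1}) = 1$, the edge $e_{k_1}$ is covered in $\cC^{(k_1)}$, so $c_{k_1}$ is defined, and $f - c_{k_1}$ avoids $e_1,\dots,e_{k_1}$ and still lies in $\cC^{(k_1+1)} \subseteq \cC$. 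Iterating, and checking that the indices $k_1 < k_2 < \cdots$ strictly increase so that the running remainder $f - c_{k_1} - c_{k_2} - \cdots$ is a legitimate \its (each $e_m$ sees only finitely many terms), we recover $f$ as a \its-sum of elements of $\cB$. The same bookkeeping shows no nonzero \its-combination of $\cB$ vanishes: the least edge in the support of such a combination would be some $e_{k_j}$ forced to appear with coefficient $1$, a contradiction.

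The main obstacle I expect is the key lemma that $\cC^{(k)}$ (circuits avoiding the first $k-1$ edges) together with $\{c_1,\dots,c_{k-1}\}$ gives a \emph{direct-sum} \its-decomposition of $\cC$ — i.e. both that these span and that the decomposition is unique. Spanning requires the "re-split a symmetric difference into circuits" move to be carried out compatibly with thinness (one must check that replacing, in a thin sum, a circuit through $e_{k-1}$ by $c_{k-1}$ plus circuits avoiding $e_{k-1}$ keeps the whole collection thin), and here the by-now-standard "follow paths, subtracting a maximal multiset" argument from the Preliminaries, specialised to $\ZZ_2$ where multiplicities don't matter, should do it; compactness (as invoked after the definition of \its-span, cf.~\cite{BG}) guarantees the resulting collection is genuinely a \its. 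Uniqueness reduces to: a thin sum of circuits all avoiding $e_1,\dots,e_{k-1}$ that equals a thin sum of $c_i$'s ($i<k$) must be the zero sum on both sides — which follows by evaluating on the edges $e_1,\dots,e_{k-1}$ in turn, using that $c_i$ is the only available circuit hitting $e_i$ but none of $e_1,\dots,e_{i-1}$.
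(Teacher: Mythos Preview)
Your approach is correct and is a close cousin of the paper's, organized differently enough to merit a comparison. The paper enumerates the \emph{simple circuits} $(f_j)_j$ and runs Gaussian row-reduction on them with respect to a fixed edge ordering: the basis element $g_j$ is what remains of $f_j$ after subtracting earlier pivots, and thinness, independence, and spanning are read off from the echelon structure of the leading indices $l(g_j)$ (spanning uses a compactness/accumulation-point argument to pass from finite to thin sums). You instead enumerate the \emph{edges} and greedily select one circuit per edge, producing a basis that is lower-triangular in the same sense. What your organization buys: thinness is immediate ($e_m$ can lie on at most $c_1,\dots,c_m$), and your basis elements are genuine circuits, which in fact matches the literal statement of the proposition more closely than the paper's $g_j$, which are $\ZZ_2$-sums of simple circuits. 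The price is that your spanning argument rests on the ``key lemma'' $\cC = \cW_{k-1}\oplus\cC^{(k)}$, which you correctly flag as the crux; your sketch is sound, the essential point being that only \emph{finitely many} circuits in any thin decomposition can pass through $e_k$, so pairing them and re-splitting each (finite) symmetric difference into simple circuits supported off $\{e_1,\dots,e_k\}$ preserves thinness. One small thing to make explicit in your write-up: the step ``$f - c_{k_1} \in \cC^{(k_1+1)}$'' is not just that the support avoids $e_1,\dots,e_{k_1}$, but that the key lemma (applied once more, at index $k_1$) promotes ``element of $\cC^{(k_1)}$ avoiding $e_{k_1}$'' to ``element of $\cC^{(k_1+1)}$''.
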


    The proof uses an infinite variation of Gaussian elimination, which we will use again later in a transfinite way.

    Any element of $\cC$ can be uniquely written as a function $EX \rightarrow  {\ZZ}_2$.
    Order the edges $EX$ as $EX = (e_i)_{i\in {\NN}}$.
    For any non-zero element $f\in \cC$, there is then a well-defined \emph{leading index} $l(f)$ of $f$:
    \[
        l(f) := \min \{i\in {\NN}\ :\ f(i)\neq 0\}.
    \]
    If $f = 0$, we set $l(f)=\infty $ by convention, and will also set $g(\infty )=0$ for any $g\in \cC$.
    \begin{proof}
        Take all simple\footnote{simple $=$ no vertex appears twice in the circuit.} circuits $\cS$ and order them: $\cS = (f_j)_{j\in {\NN}}$ ($\cS$ is countable).
        We then apply Gaussian elimination on $\cS$ as follows.
        Define the following function inductively:
        \begin{alignat*}{10}
        F: \NN {\times}\NN  &\rightarrow  (\NN {\rightarrow}{\ZZ}_2)\\
        F_{0,j} &= f_j\\
        F_{t,j} &= \begin{cases}
        F_{j,j} &  \text{if $j<t$}\\                                            %I changed the bound 
        F_{t-1,j} + F_{t-1,j}(l(F_{t-1,t-1}))\cdot F_{t-1,t-1} &  \text{if $j\ge t$}\\
        \end{cases}
        \end{alignat*}
        (with index notation used for readability).
        Note that by our convention on leading index, the definition makes sense even for $F_{t-1,t-1}=0$.
        We first remark that for any $t>j$, $F(t,j) = F(j,j)$.
        Let then $g_j := F(j,j)$, and 
        \[
        \cB := \{g_j\ |\ j\in {\NN}, g_j \neq  0\}.
        \]
        Any element of $\cB$ is non-zero, and since for $j>i$, $g_j(l(g_i)) = 0$ by construction, it follows that they are all distinct.
        
        It remains to verify that: $\cB$ is thin, \its\ independent and a \its\ basis.
        
        \point{Thinness.}
        First, note that as soon as $i = l(g_j)$ for some $j$, then $e_i$ is in the support of finitely many elements of $\cB$ by construction.
        
        Assume now there exists an non-leading edge $e_i$ that is in the support of infinitely many elements of $\cB$, and without loss of generality that it is actually the first such (i.e. $i'<i$ implies $e_{i'}$ is in the support of finitely many circuits).
        Let $t$ be such that
        \[
            g_{t}(i) \neq  0 \text{ and } g_t(i') = 0\ \forall i'<i.
        \]
        The existence of such a $t$ is guaranteed by our assumption on $i$.
        Then $g_{t}(i)\neq 0$ but $g_{t}(i')=0$ whenever $i'<i$, so that $l(g_{t}) = i$, which is a contradiction.
        
        \point{Independence.}
        Given any non-empty set of elements $\{g_i\}_{i\in I}$ of $\cB$, consider $\hat i:=\min I$, and let $\hat j = l(g_{\hat i})$
        Then $(\sum_{i\in I} g_i)(\hat j) = 1$ since for all $I\ni i\neq \hat i$, $g_i(\hat j)=0$.
        Thus, no non-trivial sum is zero.

        \point{Basis.}
        Fix $h:\NN\cong EX\rightarrow\ZZ_2$ any element of $\cC$.
        By definition, $h$ is a thin sum of simple circuits $\sum_{i\in \mathcal{I}} f_i$, where $\mathcal{I}\subset \mathbb{N}$.
        By definition of $F$ we see that any $f_i$ can be written 
        \begin{alignat*}{10}
            f_i &= F_{i,i} + \sum_{t=1}^{i} F_{t-1,i}(l(F_{t-1,t-1}))\cdot F_{t-1,t-1}\\
        \intertext{and each summand is either zero or a basis element:}
                &= g_{i} + \sum_{t=1}^{i} F_{t-1,i}(l(F_{t-1,t-1}))\cdot g_{t-1}.
        \end{alignat*}
        Therefore, $f_i$ can also be seen as a map $\tilde{f}_i: \mathcal{B}\rightarrow\mathbb{Z}_2$, where each basis element is mapped to its coefficient in the sum above. %I changed the formula cause it was wrong
        \renewcommand{\th}{\tilde{h}}
        Define now 
        \begin{alignat*}{10}
            \mathcal{I}_{s}:&=\{ i\in\mathcal{I}\ |\ i\le s \}\\
            h_s    :&= \sum_{i\in \mathcal{I}_s} f_i \in  (\ZZ_2)^{EX}\\
            \tilde{h}_s    :&= \sum_{i\in \mathcal{I}_s} \tilde{f}_i \in  (\ZZ_2)^\cB .
        \end{alignat*}
        Note that, by construction:
        \[
            h_s = \sum_g \tilde{h}_s(g)\cdot g.
        \]
        Note that if we fix an edge $e_r$, then the sequence $\{h_s(r)\}_{s\in\NN}$ is eventually constant (by thinness). 
        In other words, the sequence $\{h_s\}_{s\in \NN}$ converges (pointwise).
        Take an accumulation point and obtain the limit $\th$ of the sequence $\{\th_s\}_s$ and a subsequence $\{\th_{s_k}\}_{k}$ converging to $\th$.
        We now have 
        \[
            h_{s_k} \rightarrow  h, \quad \th_{s_k} \rightarrow  \th. 
        \]
        Define 
        \[
            h' := \sum_{g\in\mathcal{B}} \th(g)\cdot g.
        \]
        We see that for a fixed edge $e_r$,
        \begin{alignat*}{10}
        h'(r)
            &= \sum_{g\in\mathcal{B}} \tilde{h}(g)\cdot g(r) \\
            &= \sum_{g\in\mathcal{B}} \left(\lim_{k} \tilde{h}_{s_k}(g) \right)\cdot g(r) \\
            &= \sum_{g\in\mathcal{B}} \left(\lim_{k} \tilde{h}_{s_k}(g)\cdot g(r) \right) \\
        \intertext{and since $g(r)=0$ for all but a finitely number of $g\in \cB$, we can exchange limit and sum:}
            &= \lim\limits_{k} \sum_{g\in\mathcal{B}} \tilde{h}_{s_k}(g)\cdot g(r)\\
            &= \lim\limits_{k} h_{s_k}(r)\\
            &= h(r).
        \end{alignat*}
        This shows that any $h\in \cC$ can be written as a \its\ of elements of $\cB$, and concludes the proof.	
    \end{proof}

    Let $\cB$ denote a thin \its\ basis consisting of circuits as constructed above.
    By thinness, any set of elements of $\cB$ defines a (legal, i.e. thin) sum, and thus, we get a bijection:
    \[
    \ZZ_2^\cB  \leftrightarrow  \cC.
    \]

    We now want to tweak $\cB$ so as to have a sequence of increasing thin bases $\cB_i'$ for $\cC_i$ and $\cB'$ for $\cC$ such that:
    \[
    \cB_i' \subseteq  \cB_{i+1}' \subseteq  \cB'.
    \]
    This can be done in a somewhat abstract setting:

    \begin{proposition}\label{proposition:filtered_basis}
        Assume a \its\ space $W \subseteq  \ZZ_2^X$ is given, along with a sequence of increasing \its\ subspaces $V_i$.
        If $\cW$ is a countable thin \its\ basis for $W$, then one can construct a countable, thin, \its\ basis $\cB$ for $W$, along with sequence of nested \its\ bases $\cB_i$ for $V_i$.
    \end{proposition}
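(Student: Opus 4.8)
The plan is to carry out a single ``leading index'' (Gaussian elimination) construction simultaneously for all the $V_i$ and for $W$, picking every basis vector once and at the earliest level at which it is needed, so that the filtration is respected automatically. Fix an enumeration $X = (x_k)_{k\in\NN}$, and for a \its\ subspace $V \subseteq \ZZ_2^X$ and $k \in \NN$ set $V^{(k)} := \{f \in V : f(x_j) = 0 \text{ for all } j < k\}$; call $k$ a \emph{pivot} of $V$ if $V^{(k)} \neq V^{(k+1)}$, i.e.\ if some $f \in V\setminus\{0\}$ has $f(x_j) = 0$ for $j < k$ and $f(x_k) \neq 0$ (for such $f$, write $l(f) = k$ for its leading index, as in \Cref{prop:cC_has_thin_basis}). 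I will use three elementary facts: (i) $V^{(k)}/V^{(k+1)}$ is at most one-dimensional over $\ZZ_2$; (ii) if $V \subseteq V'$ then $V^{(k)} = V \cap (V')^{(k)}$, whence $\mathrm{Piv}(V) \subseteq \mathrm{Piv}(V')$; and (iii), the crux, that for \emph{any} choice of $b_k \in V^{(k)} \setminus V^{(k+1)}$ over $k \in \mathrm{Piv}(V)$, the family $\{b_k : k \in \mathrm{Piv}(V)\}$ is already a thin \its\ basis of $V$.

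I would establish (iii) as a self-contained lemma, mirroring the three bullet points of the proof of \Cref{prop:cC_has_thin_basis} but in a cleaner form that needs no reduction step. Thinness: $b_k \in V^{(k)}$ forces $b_k(x_j) = 0$ for $j < k$, so a fixed edge $x_j$ lies in the support of only the finitely many $b_k$ with $k \leq j$. Independence: for a nonempty $I$, with $\hat k := \min I$, every $b_k$ with $k \in I\setminus\{\hat k\}$ vanishes at $x_{\hat k}$ while $b_{\hat k}(x_{\hat k}) = 1$, so $\sum_{k \in I} b_k$ is nonzero at $x_{\hat k}$. Spanning: starting from $f \in V \setminus \{0\}$, repeatedly replace $f$ by $f + b_{l(f)}$; the leading index strictly increases at each step, and by thinness the sum of the $b_k$'s used is a legitimate \its\ that converges pointwise back to $f$.

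Granting (iii), the construction is brief. By (ii) the pivot sets form a chain $\mathrm{Piv}(V_0) \subseteq \mathrm{Piv}(V_1) \subseteq \cdots \subseteq \mathrm{Piv}(W)$. For each $k \in \mathrm{Piv}(W)$ put $i(k) := \min\{i : k \in \mathrm{Piv}(V_i)\}$, with the convention $i(k) = \infty$, $V_\infty := W$ when $k$ is a pivot of no $V_i$, and choose a single vector $b_k \in (V_{i(k)})^{(k)} \setminus (V_{i(k)})^{(k+1)}$. By (ii), this same $b_k$ then lies in $V^{(k)} \setminus V^{(k+1)}$ for $V = W$ and for every $V_i$ with $i \geq i(k)$. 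Set $\cB_i := \{b_k : k \in \mathrm{Piv}(V_i)\}$ and $\cB := \{b_k : k \in \mathrm{Piv}(W)\}$. These are nested and countable by construction, thin as subfamilies of the thin family furnished by (iii), and, applying (iii) to each $V_i$ and to $W$, they are thin \its\ bases of the respective spaces. (The assumption that $W$ already carries a countable thin \its\ basis $\cW$ is not actually needed: it is the case $V = W$ of (iii).)

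The only genuinely delicate point is (iii) — that an \emph{unreduced} echelon family is automatically a thin \its\ basis. In the infinite-dimensional \its\ setting one must verify that the greedy leading-index reduction ``terminates in the limit'', namely that the sum of the chosen $b_k$'s is genuinely thin and reconverges to $f$, and that no infinite thin combination of distinct $b_k$'s collapses to zero; both are controlled by the leading-index bookkeeping sketched above. Once (iii) is in place, the filtered statement reduces to the observation that selecting each pivot vector at the \emph{earliest} level where its pivot appears makes the family of bases automatically nested.
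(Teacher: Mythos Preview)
Your argument is correct, and it is genuinely simpler than the paper's. The paper uses the given basis $\cW$ to coordinatise $W$ as $\ZZ_2^{\cW}$, then well-orders \emph{all} of $W$ (a potentially uncountable set) compatibly with the filtration $V_0\subseteq V_1\subseteq\cdots$, and runs a transfinite Gaussian elimination over that well-ordering; four induction claims are carried through successor and limit stages before the resulting diagonal family is shown to be thin, independent, and spanning. Your approach sidesteps all of this: you coordinatise directly by the enumeration of $X$, observe that any echelon family indexed by the pivot set of a \its\ space is automatically a thin \its\ basis (your lemma (iii)), and then note that choosing each pivot vector at the \emph{earliest} level $V_{i(k)}$ where the pivot appears makes the resulting bases nested by the trivial inclusion $(V_{i(k)})^{(k)}\subseteq (V_i)^{(k)}$.

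The two approaches buy different things. The paper's transfinite elimination is closer in spirit to ``start from the given basis $\cW$ and reduce'', and its machinery could in principle handle filtrations indexed by arbitrary ordinals; but the cost is a long and delicate transfinite induction. Your approach is shorter, avoids ordinals above $\omega$, and --- as you point out --- shows that the hypothesis of an existing thin basis $\cW$ is superfluous: lemma (iii) applied to $V=W$ already produces one. The only place one must be careful is exactly where you flag it: in the spanning part of (iii), one should note explicitly that each partial remainder $f_n=f+\sum_{m<n}b_{k_m}$ still lies in $V$ (so its leading index is again a pivot of $V$), and that $l(f_n)\to\infty$ forces $f_n\to 0$ pointwise, whence the thin sum $\sum_n b_{k_n}$ equals $f$. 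With that spelled out, the argument is complete.
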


    Since $\cW$ is thin and a basis, we have $W\leftrightarrow \ZZ_2^\cW $.
    Given any well-ordering of $\cW$, one can let, for any $v\in W$:
    \[
    l(v) = \min\{i: v(i)\neq 0 \text{ when $v$ is viewed as an element of $\ZZ_2^\cW $}\}.
    \]
    If $v=0$, we use the same convention as above: $l(v)=\infty $, and for any $w$, $w(\infty )=0$.
    In the proof, we will assume a fixed ordering on $\cW$ and identify $W\leftrightarrow \ZZ_2^\cW $. 

    \begin{proof}

        Well-order the set $W$ in a way that respects the subsets $V_i$, i.e.
        \[
            W = \{v_\alpha \ |\ \alpha <\kappa \},
        \]
        with $\kappa $ the cardinal of $W$ and such that there exist increasing ordinals ${\alpha}_i$ with
        \[
            V_i =\{v_\alpha \in W\ |\ \alpha <{\alpha}_{i+1}\}.
        \]
        We show that we can construct $\cB_i$ ($i\in \NN$) and $\cB$ satisfying these requirements.
        We define the “Gaussian elimination” map 
        \[
        \nu :\kappa {\times}\kappa {\times}\NN\rightarrow  \ZZ_2
        \]
        inductively as follows:
        \begin{alignat}{10}\label{gaussian_elim_def}
            {\nu}_{0,\alpha } &=   v_\alpha ,\nonumber\\
            {\nu}_{\tau ,\alpha } &= \begin{cases}
            \lim_{\sigma <\tau } {\nu}_{\sigma ,\alpha } & \text{if $\tau $ is a limit ordinal},\\
        {\nu}_{\alpha ,\alpha }    & \text{otherwise and $\tau >\alpha $},\\                                         %I changed the bound 
        {\nu}_{\tau -1,\alpha } - {\nu}_{\tau -1,\alpha }(l({\nu}_{\tau -1,\tau -1}))\cdot {\nu}_{\tau -1,\tau -1} &  \text{otherwise and $\tau \le \alpha $},\\
        \end{cases}
        \end{alignat}
        where the limit is a pointwise limit (which means that at any coordinate $r$, the transfinite sequence, evaluated at that coordinate, is eventually constant).
        
        From now on, we will write ${\nu}_\alpha $ for the diagonal element ${\nu}_{\alpha ,\alpha }$.
        
        The interpretation of the map $\nu $ is as follows.
        An element of $W$ is uniquely determined by a map $\NN\rightarrow \ZZ_2$ (a row).
        Thus, the set $W$, once ordered, can be represented as a map $\kappa {\times}\NN\rightarrow \ZZ_2$ (a matrix).
        Gaussian elimination, in the finite case, is an iterative process where one iterates over the rows of a matrix, each the “pivot”, while modifying the matrix at each step, hence a map $\kappa {\times}\kappa {\times}\NN\rightarrow \ZZ_2$ .
        In short, the first coordinate represents the iteration/pivot, the second the element in question, and the third its value at a given base coordinate.
        
        At time zero, the matrix has not been modified.
        At time $\tau $, if $\tau $ is a successor ordinal, we pivot around the element ${\nu}_{\tau -1,\tau -1}$. Note that if the row $\alpha $ ``lies before the pivot” or ``is the pivot '' (i.e. $\alpha <\tau $), it is not changed.
        If row $\alpha $ “lies after the pivot” ($\alpha \ge \tau $), we must subtract the pivot to what row $\alpha $ “just was”. 
        In case $\tau $ is a limit ordinal, then we will just re-iterate all changes that happened before time $\tau $ and restart pivoting in its successor.

        We start by showing by induction on $\tau $ that:
        \begin{enumerate}
            \item\label{prop:thin}
                For any $n\in \NN$, the number of ordinals $\sigma {\leq}\tau $ such that ${\nu}_{\sigma }(n)\neq 0$ is finite.
            \item\label{prop:lim_well_def}
                The limit in \Cref{gaussian_elim_def} is actually well-defined.
            %\item\label{prop:stabilizes_at_diago}
            %    For any $\sigma {\leq}\tau $ and $\alpha <\sigma $, ${\nu}_{\sigma ,\alpha } = {\nu}_{\alpha }$; and
            \item\label{prop:zero_under_leading_index}
                For any $\sigma <\tau ,\alpha $, ${\nu}_{\tau ,\alpha }(l({\nu}_{\sigma })) = 0$.
            \item\label{prop:defining_sum}
                The value of ${\nu}_{\tau ,\alpha }$ is
                \begin{equation}\label{transfiniteind}
                    {\nu}_{\tau ,\alpha } = {\nu}_{0,\alpha } + \sum_{\sigma <\min\{\alpha ,\tau \}} {\nu}_{ \sigma ,\alpha }(l({\nu}_{\sigma })) \cdot {\nu}_{\sigma }.
                \end{equation} 
        \end{enumerate}

        Indeed, assume the above holds for any $\theta <\tau $.

        \point{If $\tau =0$:} 
        Then \Cref{prop:thin,prop:lim_well_def,prop:zero_under_leading_index,prop:defining_sum} hold trivially.
        \point{If $\tau $ is successor:}
        
        ~\Cref{prop:thin} is easy. By the induction hypothesis the number of  $\sigma \le \tau -1$ with ${\nu}_{\sigma }(n)\neq 0$ is finite. Hence it must be finite for $\tau $ too.
        
        ~\Cref{prop:lim_well_def} does not apply successor ordinals.
        
        For~\Cref{prop:zero_under_leading_index}, we see that if $\tau >\alpha $, then
        \[
        {\nu}_{\tau ,\alpha }(l({\nu}_{\sigma })) = {\nu}_{\alpha }(l({\nu}_{\sigma }))
        \]
        which is zero by induction hypothesis (i.e.~\Cref{prop:zero_under_leading_index} holds at $\theta =\alpha $).
        If $\tau {\leq}\alpha $, then
        \begin{equation}
        {\nu}_{\tau ,\alpha } = {\nu}_{\tau -1,\alpha } - {\nu}_{\tau -1,\alpha }(l({\nu}_{\tau -1,\tau -1})) \cdot {\nu}_{\tau -1,\tau -1},
        \end{equation} 
        which, evaluated in $l({\nu}_{\tau -1,\tau -1})$ is zero by construction.
        If $\sigma <\tau -1$, both ${\nu}_{\tau -1,\alpha }(l({\nu}_{\sigma }))$ and ${\nu}_{\tau -1,\tau -1}(l({\nu}_{\sigma }))$ are zero, hence so is ${\nu}_{\tau ,\alpha }$.

        Finally, ~\Cref{prop:defining_sum}. One sees that if $\tau {\leq}\alpha $, we get $\min\{\tau ,\alpha \} = \tau $, and:
        \begin{alignat*}{10}
            {\nu}_{\tau ,\alpha } 
                &= {\nu}_{\tau -1,\alpha } + {\nu}_{ \tau -1,\alpha }(l({\nu}_{\tau -1,\tau -1})) \cdot {\nu}_{\tau -1,\tau -1} \\
                &= {\nu}_{0,\alpha } + \sum_{\sigma <\tau -1} {\nu}_{ \sigma ,\alpha }(l({\nu}_{\sigma })) \cdot {\nu}_{\sigma } + {\nu}_{ \tau -1,\alpha }(l({\nu}_{\tau -1,\tau -1})) \cdot {\nu}_{\tau -1,\tau -1},\\
        \intertext{by definition of ${\nu}_{\tau ,\alpha }$. If $\tau >\alpha $, we get $\min\{\tau ,\alpha \} = \alpha $, and}
        {\nu}_{\tau ,\alpha } = {\nu}_{\tau -1,\alpha } &= {\nu}_{0,\alpha } + \sum_{\sigma <\min\{\alpha ,\tau -1\}} {\nu}_{\sigma ,\alpha }(l({\nu}_{\sigma })) \cdot {\nu}_{\sigma }\\
                &=  {\nu}_{0,\alpha } + \sum_{\sigma <\min\{\alpha ,\tau \}} {\nu}_{\sigma ,\alpha }(l({\nu}_{\sigma })) \cdot {\nu}_{\sigma }
        \end{alignat*}
        
        Now moving on to the limit case.

        \point{If $\tau $ is limit:}
        We first check~\Cref{prop:thin}.
        It suffices to show that for any $n$, the number of ordinals $\sigma <\tau $ with ${\nu}_{\sigma }(n)\neq 0$ is finite, since then the number of $\sigma {\leq}\tau $  with ${\nu}_{\sigma }(n)\neq 0$ will also be finite.
        If $n = l({\nu}_{\sigma })$ for some $\sigma <\tau $, then the number of $\theta <\sigma $ for which ${\nu}_{\theta }(n)=0$ is finite by the induction hypothesis (\Cref{prop:thin}), and for any $\tau >\theta >\sigma $, ${\nu}_{\theta }(n)=0$, also by the induction hypothesis (\Cref{prop:zero_under_leading_index}).
        Assume then that there exists some $n$ which is not a leading coefficient, which is also in the support of infinitely many ${\nu}_\sigma $ ($\sigma <\tau $).
        We can take $n$ to be the least such, and for $\sigma $ large enough, we will have:
        \[
            {\nu}_\sigma (n) \neq  0, \quad {\nu}_{\sigma }(k) = 0,\ \forall k<n,
        \]
        so that $n$ is actually the leading coefficient of ${\nu}_\sigma $, a contradiction.

        We now check~\Cref{prop:lim_well_def}.
        Let $\alpha $ arbitrary, then
        \[
            {\nu}_{\tau ,\alpha } = \lim_{\sigma <\tau } { \left({\nu}_{0,\alpha } + \sum_{\sigma '<\min\{\alpha ,\sigma \}} {\nu}_{ \sigma ',\alpha }(l({\nu}_{\sigma '})) \cdot {\nu}_{\sigma '} \right)}
        \]
        which we need to verify to be well-defined.
        In other words, we need to check that for any $n$, the sequence
        \[
            \sigma \  \mapsto \  {\nu}_{0,\alpha }(n) + \sum_{\sigma '<\min\{\alpha ,\sigma \}} {\nu}_{ \sigma ',\alpha }(l({\nu}_{\sigma '})) \cdot {\nu}_{\sigma '}(n)
        \]
        stabilises starting at some ordinal.
        But by~\Cref{prop:thin}, ${\nu}_{\sigma '}(n)$ is zero for $\sigma '$ large enough, so that the sequence indeed stabilises.
        Furthermore, it stabilises to:
        \[
           {\nu}_{0,\alpha }(n) + \sum_{\sigma <\min\{\alpha ,\tau \}} {\nu}_{ \sigma ,\alpha }(l({\nu}_{\sigma })) \cdot {\nu}_{\sigma }(n) ,
        \]
        which also proves~\Cref{prop:defining_sum}.
        
        It remains to check~\Cref{prop:zero_under_leading_index}.
        But if $\sigma <\tau ,\alpha $, then
        \[
            {\nu}_{\tau ,\alpha }(l({\nu}_\sigma )) = \lim_{\sigma '<\tau } {\nu}_{\sigma ',\alpha }(l({\nu}_\sigma )) = \lim_{\sigma <\sigma '<\tau }  {\nu}_{\sigma ',\alpha }(l({\nu}_\sigma )) = \lim 0 = 0.
        \]
        
        This closes the induction.

%        We show by transfinite induction on $\tau $ that for any ordinal $\alpha $, we have
%        \begin{equation}\label{transfiniteind2}
%        {\nu}_{\tau ,\alpha } = {\nu}_{0,\alpha } + \sum_{\sigma <\tau } {\nu}_{ \sigma ,\alpha }(l({\nu}_{\sigma })) \cdot {\nu}_{\sigma },
%        \end{equation} 
%        and that the limit in \Cref{gaussian_elim_def} is well-defined.
%        Clearly, equation (\ref{transfiniteind}) is true for $\tau =0$ and $\alpha $ arbitrary.  Suppose $\tau $ is a successor ordinal, then  
%        \begin{equation*}
%        \begin{split}
%        {\nu}_{\tau ,\alpha } 
%        &= {\nu}_{\tau -1,\alpha } + {\nu}_{ \tau -1,\alpha }(l({\nu}_{\tau -1,\tau -1})) \cdot {\nu}_{\tau -1,\tau -1} \\
%        &= {\nu}_{0,\alpha } + \sum_{\sigma <\tau -1} {\nu}_{ \sigma ,\alpha }(l({\nu}_{\sigma })) \cdot {\nu}_{\sigma } + {\nu}_{ \tau -1,\alpha }(l({\nu}_{\tau -1,\tau -1})) \cdot {\nu}_{\tau -1,\tau -1}. \\
%        \end{split}
%        \end{equation*}
%        For $\tau $  a limit ordinal, we have that equation (\ref{transfiniteind}) holds by definition of $\nu $, given it is well-defined. This is true since for fixed $n\in\mathbb{N}$ the value ${\nu}_{\sigma }(n)$ is non-zero at most a finite amount of times. This because ${\nu}_{\sigma }$ has unique leading coefficient.
%        
        Let now
        \[
            \cB := \{{\nu}_\alpha  | \alpha  <\kappa , {\nu}_\alpha {\neq}0\},\quad \cB_i = \{{\nu}_\alpha  | \alpha  <{\alpha}_{i+1}, {\nu}_\alpha {\neq}0\}
        \]
        Note that by~\Cref{prop:thin}, each non-zero ${\nu}_\alpha $ has a well-defined leading index $l({\nu}_\alpha )$, and ${\nu}_\alpha {\neq}{\nu}_\beta $ implies $l({\nu}_\alpha )\neq l({\nu}_\beta )$. 
        It follows that $\cB$ is at most countable.
        We can now check the following consequences of this process:
        \begin{enumerate}
            \item 
                By~\Cref{transfiniteind}, $\cB_i$ generates $V_i$ and $\cB$ generates $W$.	 
                Indeed,~\Cref{transfiniteind} yields (by setting $\alpha =\tau $)
                \[
                    {\nu}_{\tau } = {\nu}_{0,\tau } + \sum_{\sigma <\tau } {\nu}_{\sigma ,\tau }(l({\nu}_{\sigma })) \cdot {\nu}_{\sigma },
                \]
                so that, by rewriting the sum, we get
                \[
                    {\nu}_{0,\tau } = {\nu}_{\tau } - \sum_{\sigma <\tau } {\nu}_{\sigma ,\tau }(l({\nu}_{\sigma })) \cdot {\nu}_{\sigma },
                \]
                so that any element of $W$ (resp. $V_i$) is a thin sum of elements of $\cB$ (resp. $\cB_i$).
            \item 
                $\cB$ is thin, as a subset of  $\ZZ_2^\cW $, by the same argument as the proof of ~\Cref{prop:thin} for $\tau $ limit. 
                %In particular every circuit $c_n\in \cW$ has non-zero coefficient in $\cB$, at most $n$ times.
            \item 
                $\cB$ is thin, as a subset of $\ZZ_2^X$. 
                Every $x\in X$ appears in the support of a finite number of $v\in \cW$, and every $v\in \cW$ in the support of a finite number of elements of $\cB$.
            \item 
                The set $\cB$ is \its\ independent. 
                Suppose not. 
                Then, for some $\tilde \kappa \subset \kappa $, we have $\sum_{\sigma \in\tilde \kappa } {\nu}_\sigma  = 0$. 
                Let ${\tau}_0:=\min \{\sigma  \in \tilde \kappa  \ |\ {\nu}_\sigma  \ne 0  \}$.
                Then $\sum_{\sigma \in\tilde \kappa } {\nu}_\sigma  (l({\nu}_{{\tau}_0})) =  1$, a contradiction.
        \end{enumerate}
    \end{proof}

    In our case, this yields:
    \begin{corollary}\label{cor:filtered_basis_for_cycle_spaces}
        There exists a thin \its\ basis $\cB$ for $\cC$ with nested subsets $\cB_i$, each a \its\ basis for $\cC_i$.
    \end{corollary}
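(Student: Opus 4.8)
The plan is to obtain this as a direct instantiation of Proposition~\ref{proposition:filtered_basis}, feeding it the concrete basis produced by Proposition~\ref{prop:cC_has_thin_basis}. Concretely, I would set $W := \cC(X,\ZZ_2)$ inside the ambient countable set $EX$, and take $V_i := \cC_i(X,\ZZ_2)$ for $i \in \NN$. As noted in the discussion preceding Proposition~\ref{prop:cC_has_thin_basis}, each $\cC_i$ is a \its\ space (it is, by definition, the \its-span of the circuits of length $\leq i$), the inclusions $\cC_i \subseteq \cC_{i+1} \subseteq \cC$ hold, and $\cC$ itself is a \its\ space. So the $(V_i)_i$ form an increasing sequence of \its\ subspaces of $W$, exactly as the hypothesis of Proposition~\ref{proposition:filtered_basis} demands.

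Next I would invoke Proposition~\ref{prop:cC_has_thin_basis} to get a set $\cW$ of circuits that is a thin \its\ basis for $\cC = W$; since circuits form a countable set, $\cW$ is countable. At this point every hypothesis of Proposition~\ref{proposition:filtered_basis} is literally in place: a \its\ space $W \subseteq \ZZ_2^{EX}$, a nested chain $(V_i)_i$ of \its\ subspaces, and a countable thin \its\ basis $\cW$ for $W$. Applying the proposition produces a countable, thin \its\ basis $\cB$ for $W = \cC$, together with nested \its\ bases $\cB_0 \subseteq \cB_1 \subseteq \cdots \subseteq \cB$, with each $\cB_i$ a \its\ basis for $V_i = \cC_i$. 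That is precisely the statement of the corollary.

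There is no genuine obstacle here — all the real work has been discharged in Propositions~\ref{prop:cC_has_thin_basis} and~\ref{proposition:filtered_basis}. The only points that deserve an explicit (but immediate) check are bookkeeping ones: that the ambient countable set in Proposition~\ref{proposition:filtered_basis} is instantiated as $EX$; and that the chain we feed in is the chain $(\cC_i)_i$ of \its\ spans of bounded-length circuits, rather than the union $\cC_\infty$ (which, as emphasized in the text, need not be a \its\ space and hence would not qualify as a $V_i$). With those identifications made, the corollary is just the specialization of the abstract proposition.
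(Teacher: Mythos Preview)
Your proposal is correct and is exactly the paper's approach: the paper's own proof is the one-line ``By~\Cref{prop:cC_has_thin_basis} and~\Cref{proposition:filtered_basis},'' and your write-up simply spells out the instantiations $W=\cC$, $V_i=\cC_i$, $\cW=$ the thin basis from~\Cref{prop:cC_has_thin_basis}. There is nothing to add.
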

    \begin{proof}
        By~\Cref{prop:cC_has_thin_basis} and~\Cref{proposition:filtered_basis}. 
    \end{proof}

    And we can now easily get:

    \begin{theorem}\label{thm_large_circuits_imply_inf_dim}
        If $X$ has large circuits, then $\Huf_1(X,{\ZZ}_2)$ is infinite dimensional (as a vector space). 
    \end{theorem}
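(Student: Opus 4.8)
The plan is to use the filtered thin \its\ basis of \Cref{cor:filtered_basis_for_cycle_spaces} to produce, for every $n\in\NN$, an $n$-element $\ZZ_2$-linearly independent family in $\Huf_1(X,\ZZ_2)$; this is exactly infinite-dimensionality. By \Cref{H1uf_rewritten} I may work with $\Huf_1(X,\ZZ_2)\cong\Zsuf_1(X,\ZZ_2)/\cC_\infty(X,\ZZ_2)$, and since $\cC_\infty\subseteq\cC\subseteq\Zsuf_1$ it is enough to exhibit the independent families among classes of elements of $\cC$. I will use the thin \its\ basis $\cB$ of $\cC$ (consisting of circuits) together with the nested \its\ bases $\cB_i$ of $\cC_i$; identifying $\cC$ with $\ZZ_2^\cB$, each $\cC_i$ becomes the set of functions supported in $\cB_i$, and — the fact I will lean on repeatedly — the expansion of an element of $\cC$ along $\cB$ is \emph{unique}, as $\cB$ is \its-independent.

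First I would translate the hypothesis. ``Large circuits'' means precisely that the increasing chain $\cC_1\subseteq\cC_2\subseteq\cdots$ never stabilises, so there is a strictly increasing sequence $j_1<j_2<\cdots$ with $\cC_{j_1}\subsetneq\cC_{j_2}\subsetneq\cdots$. Because $\cB_{j_i}$ is a \its\ basis for $\cC_{j_i}$ and $\cB_{j_i}\subseteq\cB_{j_{i+1}}$, equality $\cB_{j_i}=\cB_{j_{i+1}}$ would force $\cC_{j_i}=\cC_{j_{i+1}}$; hence the basis inclusions are strict and I may choose $b_i\in\cB_{j_{i+1}}\setminus\cB_{j_i}$ for each $i$. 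Two easy facts: the $b_i$ are pairwise distinct (for $i<i'$, $b_i\in\cB_{j_{i+1}}\subseteq\cB_{j_{i'}}$ but $b_{i'}\notin\cB_{j_{i'}}$), and $b_i\notin\cB_R$ whenever $R\le j_i$ (since $\cB_R\subseteq\cB_{j_i}$).

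Now fix $n\in\NN$, partition $\NN=I_1\sqcup\cdots\sqcup I_n$ into $n$ infinite sets, and set $\gamma_l:=\sum_{i\in I_l}b_i$ for $1\le l\le n$; since $\cB$ is thin each $\gamma_l$ is a legitimate thin sum of circuits, so $\gamma_l\in\cC\subseteq\Zsuf_1$. I claim $[\gamma_1],\dots,[\gamma_n]$ are independent in $\Zsuf_1/\cC_\infty$. For nonempty $L\subseteq\{1,\dots,n\}$, disjointness of the $I_l$ and distinctness of the $b_i$ give $\sum_{l\in L}\gamma_l=\sum_{i\in J}b_i$ with $J:=\bigcup_{l\in L}I_l$ infinite; under $\cC\cong\ZZ_2^\cB$ this element has support exactly $\{b_i:i\in J\}$. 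If it lay in $\cC_\infty=\bigcup_R\cC_R$ it would lie in some $\cC_R$, which by uniqueness of $\cB$-expansions forces $\{b_i:i\in J\}\subseteq\cB_R$; but $J$ is infinite and $j_i\to\infty$, so some $i\in J$ has $j_i\ge R$, whence $b_i\notin\cB_R$ — a contradiction. Thus no nontrivial $\ZZ_2$-combination of the $\gamma_l$ lies in $\cC_\infty$, and as $n$ was arbitrary, $\Huf_1(X,\ZZ_2)$ is infinite-dimensional.

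The step I expect to need the most care is this final independence argument, and in particular the choice of the $\gamma_l$: the obstacle is that cheaper candidates collapse in the quotient — each single basis circuit $b_i$ already lies in some $\cC_R$, and any \emph{finite} $\ZZ_2$-combination of the $b_i$ lies in $\cC_\infty$, so for instance the ``tails'' $\sum_{i\ge k}b_i$ all represent one and the same class. Splitting the index set into $n$ \emph{disjoint infinite} blocks is exactly what guarantees that every nonzero combination of the $\gamma_l$ retains support escaping every $\cB_R$, and it is the uniqueness of the \its-basis expansion (\Cref{proposition:filtered_basis}) that converts ``support escapes every $\cB_R$'' into ``not in $\cC_\infty$''.
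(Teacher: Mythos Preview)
Your proof is correct and follows essentially the same approach as the paper's: both use the filtered thin \its\ basis $\cB\supseteq\cB_i$ from \Cref{cor:filtered_basis_for_cycle_spaces}, pick $b_i\in\cB_{j_{i+1}}\setminus\cB_{j_i}$ along a strictly increasing subsequence, and then form sums $\sum_{i\in J}b_i$ over disjoint infinite index sets $J$, arguing by uniqueness of the $\cB$-expansion that no nontrivial combination can land in any $\cC_R$. The only cosmetic difference is that the paper fixes once and for all an \emph{infinite} family $\cJ$ of pairwise disjoint infinite subsets of $\NN$ (thus obtaining an infinite independent family in one stroke), whereas you do it for each $n$ separately; the independence argument is identical.
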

    \begin{proof}
        Let $\cB,\cB_i$ be as in~\Cref{cor:filtered_basis_for_cycle_spaces}.

        If $X$ has large circuits, the sequence
        \[
            \cC_1 \subseteq   \ldots \subseteq  \cC_i \subseteq  \cC_{i+1} \subseteq  \ldots
        \]
        does not stabilise, so that there exists a sequence $(i_j)_{j\in {\NN}}$ satisfying:
        \[
            \cC_{i_j} \subsetneq  \cC_{i_{j+1}}.
        \]
        In particular, this implies that $\cB_{i_j} \subsetneq  \cB_{i_{j+1}}$.
        Choose then, for each $j$, $b_j \in  \cB_{i_{j+1}} - \cB_{i_{j}}$.
        
        Let $\cJ$ be an infinite set of infinite pairwise disjoint subsets of $\NN $ and
        \[
            f_J := \sum_{j\in J} b_j, \quad J \in  \cJ.
        \]
        We claim that the family $\{[f_J] \in  \Huf_1(X,\ZZ_2)\}_{J\in \cJ}$ is linearly independent, which will imply infinite dimensionality.

        We can actually show more:
        the family $\{f_J\}_{J\in \cJ}$ is thin, and no \its\ sum of the form:
        \[
            \sum_{J\in \cJ'} f_J,\quad \cJ' \subseteq  \cJ,
        \]
        lies in $\cC_\infty $.
        Indeed, since the $f_J$s have disjoint support, any such combination can be written as an infinite sum:
        \[
            f = \sum_{j\in J'} b_j,
        \]
        for some $J'\subseteq  \NN$.
        If $f \in  \cC_\infty $, then there would exist $R$ with $f\in \cC_R$, so that $f = \sum_{b\in B}b$ for $B \subseteq  \cB_R$, and $0 = \sum_{B_{J'}\Delta B} b$, where $B_{j'} = \{b_j\ |\ j \in  J'\}$.
        Since $J'$ is infinite, $B_{J'}$ contains elements of $\cB_{R'}$ for $R'$ arbitrarily high, so that the symmetric difference above is not empty.
        This is a contradiction with $\cB$ being \its\ independent.
        
    \end{proof}

    In $\Zsuf_1(X,\ZZ_2)$, infinite sums are allowed, as long as they are locally finite.
    However, in $\Huf_1(X,\ZZ_2)$, only \emph{finite} sums are naturally permitted; there is no obvious ``\its" space structure on quotients.
    Thus, the passage to a quotient involves a major loss of information.  We suggest a very simple approach that remedies this problem in our ad hoc situation.  It makes sense to define a thin quotient of thin vector spaces. There might be a more abstract setting about which one could say something.
	
	\begin{definition}
        Fix a \its~space $V\leq \ZZ_2^X$, and a sub-vector space $W\leq V$ (not necessarily closed under \its). 
	    %A quotient of a thin vectorspace $\ZZ_2^X$ by a sub-thin-vectorspace $W\subseteq\ZZ_2^X$ is defined to be the vector space quotient $\ZZ_2^X/W$, where the following thin sums are allowed:
        In the quotient $V/W$, we say that a potentially infinite sum of elements $\sum a_i$ is \emph{consistent} iff there is a collection of preimages $\tilde a_i \in  a_i$ such that the collection $\{\tilde a_i\}_i$ is thin (in particular, abusing notation, any $v\in V-\{0\}$ appears as at most a finite number of $\tilde a_is$), and all such thin choices of representatives result in the same element in $V/W$:
        \[
            \sum  \tilde a_i - \sum  \tilde b_i \in  W,
        \]
        if $\{\tilde a_i\}_i, \{\tilde b_i\}_i$ are two choices of representatives.
	    
    \end{definition}

	In order to have a cleaner theorem we also introduce the following “thin” sum.
	
	%\begin{definition}
	%	Given a thin quotient vector space. We define the infinite zero-class sum. Which is the infinite sum of the zero class and is written $\{[0]\}_{i=0}^\infty $. 
	%\end{definition}

%    \begin{definition}
%        A \emph{thin} vector space is a vector space where certain infinite sums, called \emph{thin}, are allowed and defined.
%    \end{definition}
%
%    In particular, for any set $X$, $\ZZ_2^X$ is a thin vector space, if the thin sums are defined to be those given by locally finite collections of elements of $\ZZ_2^X$ (this is the model for \emph{thin} vector spaces, c.f. \cite{BG}).
%
%
%    \begin{definition}
%        A sub-thin vector space is a subvector space of a thin vector space that is also closed unde thin sums.
%
%        A quotient of a thin vectorspace $V$ by a sub-thin-vectorspace $W$ is defined to be the thin-vector space $V/W$, where the following thin sums are defined:
%        A sum $\{x_i\}_{i=1}^\infty  \subseteq  V/W$ is allowed iff for any choice of representatives $\tilde x_i \in  x_i$, the set $\{\tilde x_i\}_i$ is thin, and all possible such choice yield the same sum, modulo $W$.
%        We will say this sum is consistent.
%        A thin sum for which two representatives give different results is called inconsistent (and will hence not be defined in the quotient).
%    \end{definition}
%
    \begin{theorem}\label{predimensiontheorem}
        The following are equivalent:
        \begin{enumerate}
            \item In $\Huf_1(X,\mathbb{Z}_2)$, the infinite sum $\{[0]\}_{i=0}^\infty $ of the zero-class is consistent.
            \item $X$ has no large circuits.
            \item Any sum of cosets that has a thin representative sum is consistent.
        \end{enumerate}
    \end{theorem}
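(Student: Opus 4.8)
I would prove the theorem via the cycle $(2)\Rightarrow(3)\Rightarrow(1)\Rightarrow(2)$. Throughout, recall from \Cref{H1uf_rewritten} that $\Huf_1(X,\ZZ_2)=\Zsuf_1(X,\ZZ_2)/\cC_\infty(X,\ZZ_2)$, so the relevant quotient is $V/W$ with $V=\Zsuf_1(X,\ZZ_2)$, $W=\cC_\infty$, and the zero coset is $[0]=W=\cC_\infty$.

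For $(2)\Rightarrow(3)$: the absence of large circuits means, by the list of equivalences preceding the theorem, that $\cC=\cC_R$ for some $R$; since $\cC_r\subseteq\cC_R\subseteq\cC_\infty\subseteq\cC$ for all $r$, this forces $\cC_\infty=\cC$, so that $W$ is itself a \its\ space. Then, given a sum of cosets $\sum_i a_i$ admitting a thin representative family $(\tilde a_i)$ and any second thin choice of representatives $(\tilde b_i)$, I note that $(\tilde a_i-\tilde b_i)$ is again thin (its support lies in the union of the supports of the two families) with every term in $W=\cC$; closure of $\cC$ under \its s gives $\sum_i(\tilde a_i-\tilde b_i)\in\cC=W$, and after rearranging thin sums $\sum_i\tilde a_i-\sum_i\tilde b_i\in W$, i.e.\ the sum is consistent. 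For $(3)\Rightarrow(1)$ there is nothing to do beyond observing that the constant family with every representative equal to $0\in[0]$ is thin, so $\sum_{i=0}^\infty[0]$ has a thin representative sum and $(3)$ declares it consistent.

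For $(1)\Rightarrow(2)$ I argue by contraposition: assuming $X$ has large circuits, I reproduce the construction from the proof of \Cref{thm_large_circuits_imply_inf_dim}. Using \Cref{cor:filtered_basis_for_cycle_spaces}, take a thin \its\ basis $\cB$ of $\cC$ with nested \its\ bases $\cB_i$ of $\cC_i$; since $(\cC_i)_i$ does not stabilise, choose $i_0<i_1<\cdots$ with $\cB_{i_j}\subsetneq\cB_{i_{j+1}}$ and circuits $b_j\in\cB_{i_{j+1}}\setminus\cB_{i_j}$. Then $(b_j)_j$ is thin (a subfamily of $\cB$), so $f:=\sum_j b_j\in\cC$ is well defined, and I claim $f\notin\cC_\infty$: were $f\in\cC_R$, then uniqueness of \its\ expansions over the \its-independent basis $\cB$ together with $\cB_R$ being a \its\ basis of $\cC_R$ would force every $b_j$ to lie in $\cB_R$, contradicting $b_j\notin\cB_{i_j}\supseteq\cB_R$ for all $j$ with $i_j>R$. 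Now the families $(b_j)_j$ and $(0)_j$ are two thin choices of representatives for the coset sum $\sum_j[0]$ (each $b_j\in\cC_\infty=W$), yet they differ by $\sum_j b_j-0=f\notin W$; hence $\sum_i[0]$ is not consistent, so $(1)$ fails.

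I expect the only genuinely delicate point to be the verification $f\notin\cC_\infty$, which hinges on the uniqueness of \its\ expansions in the \its-independent basis $\cB$ and on the nesting $\cB_R\subseteq\cB_{i_j}$ for $R\le i_j$ — precisely the bookkeeping already carried out in \Cref{thm_large_circuits_imply_inf_dim}. Everything else is routine manipulation of thin sums against the definition of consistency.
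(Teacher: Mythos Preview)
Your proof is correct and follows essentially the same strategy as the paper, with a slightly tighter organisation: you close the cycle $(2)\Rightarrow(3)\Rightarrow(1)\Rightarrow(2)$, whereas the paper proves four implications ($3\Rightarrow1$, $1\Rightarrow2$, $2\Rightarrow1$, $1\Rightarrow3$). The one substantive difference is in the ``no large circuits $\Rightarrow$ consistency'' direction: you argue directly that $\cC_\infty=\cC$ is a \its\ space and invoke its closure under thin sums, while the paper instead lifts each representative $\tilde x_i\in\cC_\infty$ to a $2$-chain $g_i$ in a fixed Rips complex and extracts an accumulation point of the partial sums $\sum_{i\le n}g_i$; your route is cleaner and avoids the detour through $\Csuf_2$. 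For $(1)\Rightarrow(2)$ you make explicit the appeal to the filtered thin basis of \Cref{cor:filtered_basis_for_cycle_spaces}, which the paper's one-line ``there exists an infinite thin sum of large circuits that is non-zero in homology'' leaves implicit --- your version is the more careful of the two.
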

    \begin{proof}
        Clearly 3 implies 1.\newline
        
        We show 1 implies 2. Suppose $X$ has large circuits, then there exists an infinite thin sum of large circuits that is non-zero in homology. However every individual circuit is finite and thus zero in homology. On the other hand the repeated infinite sum of $0$ gives $0$. So we see that the infinite sum of the zero class has two representing thin sums that give a different solution. Hence it is inconsistent.\newline
        
        We show 2 implies 1. Consider an infinite representing thin sum $\sum\limits_{i=1}^\infty  \tilde x_i$ of the infinite zero class sum. Since there are no large circuits, there is some $R>0$ such that each $\tilde x_i$ can be written as a sum of $R$-circuits. More specifically there is a $g_i$ such that $\partial g_i =  \tilde x_i$. Now take some accumulation point $\lambda$ of the sequence $\left\{ \sum\limits_{i=1}^{n} g_i \right\}_{n\in\mathbb{N}}$. Then $\partial \lambda = \sum\limits_{i=1}^\infty  \tilde x_i $. The infinite zero-class sum is as such consistent.\newline
        
        We show 1 implies 3. Suppose there is some inconsistent thin sum. So two representing sums $\sum\limits_{i=1}^\infty \tilde x_i$ and  $\sum\limits_{i=1}^\infty \tilde y_i$, such that $[\tilde x_i] = [\tilde y_i]$. Then each $[\tilde x_i-\tilde y_i]=[0] $. Hence by assumption $\left [\sum\limits_{i=1}^\infty \tilde x_i-\tilde y_i\right ]=[0] $. Which shows that $\left [\sum\limits_{i=1}^\infty \tilde x_i\right]=\left [\sum\limits_{i=1}^\infty \tilde y_i\right] $, because finite sums are always well defined.
    \end{proof}

    \begin{corollary}\label{coroldimension}
        If $\Huf_1(X,\mathbb{Z}_2)$ has large circuits, then every \its~is inconsistent.
    \end{corollary}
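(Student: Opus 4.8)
The plan is to show that once $X$ has large circuits, \emph{any} thin choice of representatives for \emph{any} infinitely-indexed sum of cosets in $\Huf_1(X,\ZZ_2)$ can be perturbed into a second thin choice of representatives whose sum lies in a different coset, so that no such sum can be consistent. Throughout I use the description $\Huf_1(X,\ZZ_2)\cong\Zsuf_1(X,\ZZ_2)/\cC_\infty(X,\ZZ_2)$ of~\Cref{H1uf_rewritten}, writing $W=\cC_\infty(X,\ZZ_2)$; by a ``\its'' of cosets I mean a family $(a_i)_{i\in\NN}$ admitting a thin choice of preimages $\tilde a_i\in a_i$ whose sum $\sum_i\tilde a_i$ is (bounded, hence) in $\Zsuf_1(X,\ZZ_2)$. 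For a sum indexed by a finite set there is nothing to check, so ``every \its'' is to be read as ``every \its\ indexed by an infinite set''.

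First I would fix, once and for all, a witnessing family of circuits. Since $X$ has large circuits, $\cC(X,\ZZ_2)\setminus\cC_\infty(X,\ZZ_2)\neq\emptyset$; pick $w$ in it and write $w=\sum_i x_i$ as a thin sum of finite circuits $x_i$. Infinitely many of the $x_i$ must be nonzero, for otherwise $w$ would be a finite sum of circuits and hence lie in $\cC_\infty$; so after deleting the zero terms and reindexing I may take $(x_i)_{i\in\NN}$, each $x_i$ a genuine circuit of some finite length $\ell_i$. Two properties are all I use: each $x_i$ lies in $\cC_{\ell_i}\subseteq\cC_\infty=W$, so that $[x_i]=0$ in the quotient; and for every $N\in\NN$ one has $\sum_{i\ge N}x_i = w-\sum_{i<N}x_i\notin\cC_\infty$, because $\cC_\infty$ is a vector subspace, $\sum_{i<N}x_i\in\cC_\infty$, and $w\notin\cC_\infty$.

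Now let $\sum_i a_i$ be an arbitrary \its\ of cosets with thin representative $(\tilde a_i)_{i\in\NN}$ (reindexing so that this family and $(x_i)_i$ share the index set $\NN$), and compare it with $(\tilde a_i+x_i)_{i\in\NN}$. This is again a thin choice of representatives of \emph{the same} coset-sum: $\tilde a_i+x_i\in a_i$ because $[x_i]=0$, and $\{\tilde a_i+x_i\}_i$ is thin since every edge lies in the support of only finitely many $\tilde a_i$ and of only finitely many $x_i$. Its sum, however, is $\sum_i(\tilde a_i+x_i)=\big(\sum_i\tilde a_i\big)+w$ — the rearrangement being legitimate because thin sums in $\ZZ_2^{EX}$ are computed coordinatewise by finite sums — and this still lies in $\Zsuf_1$ since $w\in\cC\subseteq\Zsuf_1$. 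Thus the two thin representatives of $\sum_i a_i$ have sums differing by $w\notin W$, hence give distinct elements of $\Huf_1(X,\ZZ_2)$, and $\sum_i a_i$ is inconsistent. As $\sum_i a_i$ was arbitrary, this proves the corollary.

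I do not foresee a real obstacle: the only points needing a line of justification are that a union of two thin families is thin and that coordinatewise rearrangement of $\ZZ_2$-valued thin sums is valid, both immediate. An essentially identical packaging would obtain the witnessing family directly from the equivalence (1)$\Leftrightarrow$(2) of~\Cref{predimensiontheorem} — ``large circuits'' makes the infinite zero-class sum inconsistent, which is exactly the existence of a thin family of circuits with sum outside $\cC_\infty$ — and then run the same perturbation; I would probably present it that way to avoid re-deriving the circuit decomposition of $w$.
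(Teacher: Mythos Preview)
Your argument is correct and is precisely the natural unpacking of the paper's one-line ``direct consequence of \Cref{predimensiontheorem}'': the theorem (via (1)$\Leftrightarrow$(2)) supplies a thin family in $\cC_\infty$ whose sum lies outside $\cC_\infty$, and you then perturb an arbitrary infinite thin-sum by this family to force inconsistency. Your closing remark --- that one may simply cite the theorem for the witnessing family rather than re-derive the circuit decomposition of some $w\in\cC\setminus\cC_\infty$ --- is exactly how the paper's proof is meant to be read.
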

    \begin{proof}
        Direct consequence of Theorem \ref{predimensiontheorem}.
    \end{proof}

    We draw the following conclusion:

    Define the \emph{thin dimension} $\dim_{\text{thin}}$ of $\Huf_1(X,\ZZ_2)$ as the least cardinal $\kappa $ such that $\Huf_1(X,\ZZ_2)$ has a subset $K$ of cardinality $\kappa $ and such that any element of $\Huf_1(X,\ZZ_2)$ can be written as a consistent sum of elements of $K$.

    \begin{theorem}
        \begin{itemize}
            \item $\dim_{\text{thin}} (\Huf_1(X,\mathbb{Z}_2))$ is countable if and only if $X$ has no large circuits. In this case it is equal to the number of ends $-1$ (where we do not make the difference between countably many or uncountably many ends).
            \item  $\dim_{\text{thin}} (\Huf_1(X,\mathbb{Z}_2))$ is uncountable if and only if $X$ has large circuits.
        \end{itemize}
    \end{theorem}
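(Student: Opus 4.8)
The plan is to treat two regimes separately—$X$ without large circuits and $X$ with large circuits—and then read off every stated equivalence by contraposition, since a cardinal is either countable or uncountable. Throughout one drops the coefficient ring $\ZZ_2$, uses $\Huf_1(X) = \Zsuf_1(X)/\cC_\infty$ (\Cref{H1uf_rewritten}), and exploits that for $\ZZ_2$ coefficients $\Csuf_1(X) = \ZZ_2^{EX}$, so that $\Zsuf_1(X)$ and $\cC$ are honest subsets of $\ZZ_2^{EX}$.

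\emph{No large circuits.} Then $\cC = \cC_R = \cC_\infty$ for some $R$, so $\cC/\cC_\infty = 0$ and \Cref{decomp_Z2} gives $\Huf_1(X) \cong \Huf_1(T)$ for an end-defining subtree $T$; by \Cref{theorembips} (passing to a trivalent model) this is $\ZZ_2^P$ for the tame set of bips $P$, which is countable, with $|P| = \#\text{ends}(X)-1$ when $X$ has finitely many ends and $|P|$ countably infinite otherwise ($\#\text{ends}(T) = \#\text{ends}(X)$ since $T$ is end-defining). The upper bound $\dim_{\text{thin}}(\Huf_1(X)) \le |P|$ comes for free by taking $K = \{[p] : p\in P\}$: any class is $[f]$ with $f \in \Zsuf_1(X)$, and by the proof of \Cref{decomp_Z2} $f$ equals a thin sum of circuits plus some $t = \sum_{p\in S}p$ with $S\subseteq P$ thin, so $\sum_{p\in S}[p]$ represents the class and is consistent by \Cref{predimensiontheorem}(3), giving that $K$ consistently spans. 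For the matching lower bound I would first record that $\cC$ is \emph{closed} in $\ZZ_2^{EX}$ for the pointwise topology: by \Cref{prop:cC_has_thin_basis} it is the image of the compact space $\ZZ_2^{\cB}$ under the continuous injection $(\lambda_b)_b \mapsto \sum_b \lambda_b b$. Hence, if $[\kappa]\ne 0$, no \emph{infinite} family of representatives $\kappa + c_i$ ($c_i\in\cC$) is thin, since thinness forces $c_i(e)=\kappa(e)$ eventually at each edge $e$, so $c_i\to\kappa$ pointwise and $\kappa\in\cC$. Consequently a consistent sum indexed over a \emph{finite} set of classes collapses to a finite $\ZZ_2$-combination (each nonzero class occurs finitely often, the $[0]$-terms sum into $\cC$), so a finite set consistently spans only a finite-dimensional subspace. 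When $X$ has infinitely many ends $\Huf_1(X)\cong\ZZ_2^P$ is infinite-dimensional, forcing $\dim_{\text{thin}}\ge\aleph_0=|P|$; when $X$ has $k$ ends a finite consistent spanning set must $\ZZ_2$-span $\ZZ_2^{k-1}$, so has size $\ge k-1=|P|$. Either way $\dim_{\text{thin}}(\Huf_1(X)) = |P|$, which is countable and equals $\#\text{ends}(X)-1$ in the stated sense.

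\emph{Large circuits.} By \Cref{coroldimension} every infinite thin-sum of cosets is inconsistent, so the only consistent sums are finite ones and $\dim_{\text{thin}}(\Huf_1(X))$ coincides with the ordinary $\ZZ_2$-dimension; it then remains to see that the latter is uncountable, refining the proof of \Cref{thm_large_circuits_imply_inf_dim}. Taking the filtered thin basis $\cB\supseteq\cB_i$ of \Cref{cor:filtered_basis_for_cycle_spaces} and, as the $\cC_i$ do not stabilise, elements $b_j\in\cB_{i_{j+1}}\setminus\cB_{i_j}$ with $i_j\to\infty$, the map $S\mapsto\sum_{j\in S}b_j$ is a $\ZZ_2$-linear injection $\ZZ_2^{\NN}\hookrightarrow\cC$; post-composed with $\cC\to\cC/\cC_\infty\hookrightarrow\Huf_1(X)$ its kernel is exactly the finitely supported sequences (since $\sum_{j\in S}b_j\in\cC_R$ forces $\{b_j:j\in S\}\subseteq\cB_R$, a finite set, by \its-independence of $\cB$). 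Thus $\ZZ_2^{\NN}/\ZZ_2^{(\NN)}$ embeds in $\Huf_1(X)$, and this quotient has dimension $2^{\aleph_0}$, so $\dim_{\text{thin}}(\Huf_1(X))$ is uncountable.

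Assembling the two regimes: $\dim_{\text{thin}}$ is countable precisely in the first, i.e. precisely when $X$ has no large circuits, and then equals $\#\text{ends}(X)-1$; it is uncountable precisely in the second, i.e. precisely when $X$ has large circuits. I expect the main obstacle to be the lower bound in the no-large-circuits case: the tempting shortcut ``consistent $=$ finite'' is false there (witness $\sum_{p\in P}[p]$), so one genuinely needs the pointwise-closedness of $\cC$ to control consistent sums over a \emph{finite} index set, and then must argue separately for finitely versus infinitely many ends.
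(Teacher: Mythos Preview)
Your argument is correct and follows the paper's overall strategy—split on whether $X$ has large circuits and invoke \Cref{decomp_Z2}, \Cref{theorembips}, \Cref{predimensiontheorem}, \Cref{coroldimension}, and the construction from \Cref{thm_large_circuits_imply_inf_dim}.  You are, however, substantially more careful than the paper in two places where its proof is terse to the point of incompleteness.  First, for the lower bound $\dim_{\text{thin}}\ge\#\text{ends}-1$ in the no-large-circuits case you introduce a genuine new ingredient, the pointwise closedness of $\cC$ (as the continuous image of the compact $\ZZ_2^{\cB}$), and use it to show that a nonzero class cannot appear infinitely often in a thin family of representatives; this forces a finite $K$ to consistently span only its finite $\ZZ_2$-linear span.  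The paper's proof simply asserts the equality with $\#\text{ends}-1$ without any lower-bound argument.  Second, in the large-circuits case you explicitly embed $\ZZ_2^{\NN}/\ZZ_2^{(\NN)}$ into $\Huf_1$ to obtain uncountable $\ZZ_2$-dimension; the paper only writes ``we showed before that $\Huf_1$ contains an uncountable number of elements'', but \Cref{thm_large_circuits_imply_inf_dim} as stated proves infinite dimension, not uncountable cardinality, so your refinement is exactly what is needed.

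One cosmetic point: the phrase ``$\{b_j:j\in S\}\subseteq\cB_R$, a finite set'' is ambiguous, since $\cB_R$ itself need not be finite.  What you mean—and what suffices—is that only finitely many of the chosen $b_j$ lie in any fixed $\cB_R$, because $b_j\in\cB_{i_{j+1}}\setminus\cB_{i_j}$ and $i_j\to\infty$; together with injectivity of $j\mapsto b_j$ this forces $S$ finite.
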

    \begin{proof}
        
        \point{First part.} If there are no large circuits, then by~\Cref{decomp_Z2} there exists a tree $T$ inside $X$, such that $ \Huf_1(T,\ZZ_2)=\Huf_1(X,\ZZ_2)$. The isomorphism is given by the natural inclusion map. Hence~\Cref{decomp_Z2} gives us a thin \its\ basis $\cB$ in the tree. Note that all sums remain well-defined when we inject $\cB$ into $X$ by Theorem \ref{predimensiontheorem}.\newline
        
        \point{Second part.} Suppose $X$ has large circuits, then by Corollary \ref{coroldimension} all consistent sums are finite. However we showed before that $\Huf_1(X,\mathbb{Z}_2)$ contains an uncountable number of elements. So a generating set must be uncountable too.\newline
        
        Note that since having large circuits and not having large circuits exhausts the universe, both implications above are actually equivalences.
    \end{proof}

    %auto-ignore

\section{$\ZZ$-large circuits}\label{section:ZZ_large_circuits}

Recall that a ULF graph $X$ has $\ZZ$-large circuits if
\[
    \forall r>0\ \cC_r(X,\ZZ) \neq  \cC_\infty (X,\ZZ).
\]

\begin{proposition}\label{ZZ_large_circuits_implies_H_neq_zero}
    If $X$ is vertex-transitive and has $\ZZ$-large circuits, then $\Huf_1(X,\ZZ)\neq 0$.
\end{proposition}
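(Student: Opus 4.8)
By \Cref{H1uf_rewritten} we have $\Huf_1(X,\ZZ)\cong\Zsuf_1(X,\ZZ)/\cC_\infty(X,\ZZ)$, so it suffices to produce one cycle $f\in\Zsuf_1(X,\ZZ)$ lying in no $\cC_r(X,\ZZ)$ — equivalently, by \Cref{lemma:CZB}, a cycle of $X$ that bounds in no Rips complex $R_sX$. Note that $X$ is necessarily infinite, since a finite graph has only finitely many circuits and hence no $\ZZ$-large circuits.

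\textbf{Unboundedly complex simple circuits.} The first step is to show that $\ZZ$-large circuits forces, for every $r$, the existence of a \emph{simple} circuit $c$ with $\fe c\notin\cC_{3r}(X,\ZZ)$. Suppose not: for some $r$ every simple circuit lies in $\cC_{3r}$, hence (splitting an arbitrary circuit into finitely many simple sub-circuits, boundedly many of which meet any given point by ULF) so does every circuit. Here vertex-transitivity enters: circuits of any fixed length $\ell$ form finitely many $\Aut(X)$-orbits, so their $\cC_{3r}$-decompositions — i.e., by \Cref{lemma:CZB}, their fillings by $2$-chains in $R_{\lceil 3r/2\rceil+1}X$ — can be taken to be $\Aut(X)$-translates of the fillings of the finitely many orbit representatives, hence bounded in norm and reaching only a bounded distance from the circuit, with both bounds depending only on $\ell$. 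Feeding these uniform fillings, together with the triangulation estimates in the proof of \Cref{lemma:CZB}, into an arbitrary element of $\cC_\ell$ exhibits it in $\Zsuf_1(X,\ZZ)\cap\Bsuf_1(R_{\lceil 3r/2\rceil+1}X)\subseteq\cC_{\lceil 9r/2\rceil+3}$; so $\cC_\infty=\cC_{\lceil 9r/2\rceil+3}$, contradicting $\ZZ$-large circuits.

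\textbf{Spreading out and localizing.} For each $k$ pick, by the previous step, a simple circuit $c_k$ with $\fe c_k\notin\cC_{3k}(X,\ZZ)$; then $\|\fe c_k\|_\infty=1$. Using vertex-transitivity and the infiniteness of $X$, choose $\gamma_k\in\Aut(X)$ inductively so that the translated circuits $d_k:=\gamma_k(c_k)$ are pairwise at distance greater than $100(|c_k|+|c_j|+k+j)$ — possible greedily, as only a finite (hence bounded) subgraph has been used after finitely many steps. Since the supports of the $\fe d_k$ are pairwise disjoint and locally finite, $f:=\sum_k\fe d_k$ is a well-defined element of $\cC(X,\ZZ)\subseteq\Zsuf_1(X,\ZZ)$ with $\|f\|_\infty=1$. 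To see $f\notin\cC_\infty$, suppose $f=\partial G$ with $G\in\Csuf_2(R_rX)$ bounded, and fix $k$ with $k>10r$, so $d_k$ is at distance $>100r$ from every other $d_j$. Let $\tilde G_k$ be the restriction of $G$ to the $2$-simplices contained in $B(d_k,2r)$; since any simplex of $R_rX$ meeting an edge within distance $r$ of $d_k$ has all vertices within distance $2r$ of $d_k$, we get $\partial\tilde G_k=\fe d_k+\eta_k$ where $\eta_k$ is a bounded cycle supported in the finite annulus $\{\,r<\mathrm{d}(\cdot,d_k)\le 2r\,\}$. Now $\partial\tilde G_k\in\Zsuf_1(X,\ZZ)\cap\Bsuf_1(R_rX)\subseteq\cC_{3r}$; if one can moreover show $\eta_k\in\cC_{O(r)}(X,\ZZ)$, then $\fe d_k=\partial\tilde G_k-\eta_k\in\cC_{O(r)}$, whence $\fe c_k=\gamma_k^{-1}\fe d_k\in\cC_{O(r)}\subseteq\cC_{3k}$ for $k$ large — a contradiction. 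Thus $f$ lies in no $\cC_r$ and its class in $\Huf_1(X,\ZZ)$ is non-zero.

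\textbf{Main obstacle.} The delicate point — and the one I expect to take real work — is exactly the claim that the truncation error $\eta_k$ is filled at bounded scale; this is a \emph{localization} statement for fillings, and it cannot be taken for granted, since a priori the $2$-simplices of the global filling $G$ may crowd around $d_k$ and cancel among themselves, and since uniformly finite homology admits Eilenberg–swindle phenomena that might conceivably kill $[\![f]\!]$ even though each $[\![\fe d_k]\!]$ is non-trivial. My plan to handle it: (i) space the $d_k$ extremely generously (much more than $100(\cdots)$ if needed) and cut $G$ over a nested family of concentric shells of width $O(r)$ around $d_k$, writing $\eta_k$ as a telescoping sum of the shell-truncations of $G$ — each a $2$-chain in $R_rX$ supported in a bounded region, with the total chain remaining bounded because every simplex lies in at most two shells; (ii) if cancellation of the circuits turns out to be a genuine obstruction, replace $f$ by an arrangement of the $d_k$ marching off towards a single end of $X$ (recall $X$ is one- or two-ended here, or reduce to trees by \Cref{tree_injection} if there are more ends), so that the accumulation at that end obstructs any swindle. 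Vertex-transitivity is essential in the first two steps — it supplies the unboundedly complex circuits and lets us translate them into position — while the localization is where the $\ZZ$-coefficient bookkeeping, absent in the cleaner $\ZZ_2$ picture of \Cref{section:Z2}, has to be carried out by hand.
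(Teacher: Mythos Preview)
Your overall strategy — assemble, at disjoint locations, witnesses to ``$\cC_r\neq\cC_\infty$'' into a single norm-$1$ cycle $f$, assume a bounded filling exists, and then read off from that global filling a contradiction at one of the witnesses — is exactly the paper's strategy. Your Step~2 (producing simple circuits $c_k$ with $\fe c_k\notin\cC_{3k}$ via the orbit-finiteness argument) is correct and is a nice variant of the paper's norm-$1$ reduction; the paper instead takes any $f_r\in\cC_\infty\setminus\cC_r$, writes the underlying $g:\fC_s\to\ZZ$ as a finite alternating sum of maximal \emph{pairwise-disjoint} sub-multisets of circuits, and observes that one of the resulting norm-$1$ summands must still miss $\cC_r$.

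The genuine gap is the one you flag yourself: the localisation step. Restricting $G$ to $B(d_k,2r)$ produces a truncation error $\eta_k$ supported in an annulus of diameter $O(|c_k|)$, and there is no reason that cycle lies in $\cC_{O(r)}$ — it may wrap once around the annulus, and then the only obvious bound is $\eta_k\in\cC_{O(|c_k|)}$, which is useless. Your telescoping plan~(i) does not help: the differences $\eta_k^{(j)}-\eta_k^{(j-1)}$ are indeed in $\cC_{3r}$, but to conclude $\eta_k^{(1)}\in\cC_{3r}$ you need a base case $\eta_k^{(m)}\in\cC_{3r}$, and that is equivalent to $\fe d_k\in\cC_{3r}$, which is precisely what you are trying to disprove. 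Plan~(ii) is not a plan.

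The paper sidesteps the localisation problem entirely by a compactness-and-pullback argument, and for this it needs a \emph{different} choice of witnesses: not single large circuits, but for each $r$ a fixed $f_r\in\cC_\infty\setminus\cC_r$ of norm $1$ together with finite approximations $f_r^i\to f_r$ obtained by restricting the underlying $g_r\in l^\infty(\fC_s)$ to circuits meeting the ball of radius $i$. All the $f_r^i$ (over \emph{both} $r$ and $i$) are moved to disjoint balls and summed to $F$. If $F\in\cC_s$, say $F=\fe G$, one does \emph{not} truncate $G$; instead one pulls it back by the moving automorphisms, $G_{r,i}:=G\circ\phi_{r,i}$, so that $\fe G_{r,i}$ agrees with $f_r^i$ on the (exhausting) ball $B_{r,i}$. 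A subsequential pointwise limit $G_r$ of the uniformly bounded $G_{r,i}$ then satisfies $\fe G_r=f_r$, forcing $f_r\in\cC_s\subseteq\cC_r$ for any $r>s$ — contradiction. The point is that with a \emph{fixed} target $f_r$ and convergent approximations $f_r^i$, the limit of the pullbacks fills $f_r$ with no truncation error at all. Your setup, with the target $c_k$ varying with $k$, does not support this limit argument; that is why your localisation is hard and the paper's is free.
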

\begin{proof}
    
    Assume $X$ vertex transitive with zero homology and $\ZZ$-large circuits.
    For any $r>0$, take some $f_r \in \cC_\infty  - \cC_r$.

    We claim that $f_r$ can be assumed to have norm $1$.
    First, by assumption that $f \in  \cC_\infty $, there exists some $s>r$ with $f_r \in  \cC_s$, and thus $f_r = \fe g$ for some $g:\fC_s\rightarrow \ZZ$ uniformly bounded.
    One can write $g = g^{+} - g^{-}$, so just assume for now that $g\geq 0$.
    Now, consider the following process: view $g$ as a multiset, and consider a maximal subset of circuits that do not intersect pairwise, and subtract it from $g$; iterate.
    This process has to end eventually (indeed, fix any circuit $c$ in the support of $g$; by ULF, there exists some $K$ not depending on $c$ such that $c$ intersects at most $K$ other circuits in the support of $g$; if during the $K$ first steps $c$ is never removed, this means by maximality that at each step a neighbor is removed; thus after $K+1$ steps all neighbors are removed, and then so will $c$, by maximality), so that $f = \sum  \fe g^{+}_i - \sum  \fe g^{-}_i$, with each $f_i^\pm  := \fe g^{\pm}_i$ of norm $1$.
    If each $f_i^\pm $ lied in $\cC_r$, so would $f$ ($\cC_r$ being a vector space).
    We conclude that at least one of $f_i^\pm $ does not lie in $\cC_r$, and the claim is verified.
    
    Fix some basepoint $v_0\in VX$, and for any $r$, some $f_r = \fe g_r$ as above, and consider the family $(f_r^{i} \in  \Csuf_1(X,\ZZ))_{i\in \NN}$ constructed as follows:
    For any $i$, take the restriction of $g_r$ to circuits intersecting a ball of radius $i$ around $v_0$, call this $g_r^i$ and let  $f_r^i = \fe g_r^i$.
    Note that since all circuits of $g_r^i$ are disjoint, $f_r^i$ also has norm $1$; furthermore, $f_r^i$ has support in a ball of radius $i+s$ if $g_r \in  l^\infty (\fC_s)$.
    Im other words, $f_r$ and $f_r^i$ are both elements of $\Zsuf_1$ of norm ($\leq $) $1$ and agreeing on the ball of radius $i$ around $v_0$.
    The important aspect of this family is that $f_r^i \ovs{i}{\rightarrow } f_r$ pointwise.

    Since $X$ is infinite and transitive, it is now possible to move the elements $f_r^i$ and balls $B_{r,i}$ containing their support, for $i$ \emph{and} $r$ varying in $X$ in such a way that the balls do not intersect pairwise.
    Say ${\phi}_{r,i}$ is the automorphism of $X$ used to move $f_r^i$.

    Let $F$ be the well-defined sum of these (moved elements); $F$ lies in $\Zsuf_1$ and has norm $1$.
    Since $\Huf_1=0$, there exists some $F \in  \cC_\infty $, and by definition, $F \in  \cC_s$ for some $s$.
    Take therefore some $G:\fC_s \rightarrow  \ZZ$ with $\fe G=F$ and of uniformly bounded norm.
    Fix any $r>s$ and consider the sequence $(G_{r,i}) := (G\circ {\phi}_{r,i}^{-1})_i$.
    Note that by construction $\fe G_{r,i}|B_{r,i} = f_{r,i}$ and recall that $f_{r,i} \rightarrow  f_r$.
    Up to taking a subsequence, one may assume that $G_{r,i}$ converges to some $G_r$ (since the $G_{r,i}$ are uniformly bounded in norm).
    Therefore, $G_{r,i} \rightarrow  G_r$, which implies $\fe G_{r,i} \rightarrow  \fe G_r$, but $\fe G_{r,i} \rightarrow  f_r$ since $\fe G_{r,i}|B_{r,i} = f_r$ and the sequence $B_{r,i}$ exhausts $X$.
    Since $\fe G_r=f_r$, we get $f_r\in \cC_s$, thus contradicting $f_r \notin  \cC_r$.

\end{proof}

\begin{proposition}
	If $X$ is vertex-transitive and has $\ZZ$-large circuits, then $\dim(\Huf_1(X,\ZZ))=\infty$.
\end{proposition}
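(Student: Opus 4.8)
The plan is to upgrade \Cref{ZZ_large_circuits_implies_H_neq_zero} from ``$\Huf_1(X,\ZZ)\neq 0$'' to ``$\dim\Huf_1(X,\ZZ)=\infty$'' by running its construction on infinitely many independent scales and translating everything into pairwise disjoint regions of $X$. First I would observe that the proof of \Cref{ZZ_large_circuits_implies_H_neq_zero} actually establishes more than its statement: the chain $F\in\Zsuf_1(X,\ZZ)$ assembled there — a sum over all radii $r$ and all finite approximation indices $i$ of $\Aut(X)$-translated norm-$1$ pieces with pairwise disjoint supports — already satisfies $[F]\neq 0$. Indeed, the hypothesis $\Huf_1(X,\ZZ)=0$ enters only to produce $F\in\cC_\infty(X,\ZZ)$, that is $F=\fe G$ with $G$ uniformly bounded and supported on circuits of some length $\le S$; everything afterwards (conjugate $G$ by the translating automorphisms, extract a pointwise limit using the uniform bound and ULF, conclude $f_r=\fe G_r\in\cC_S\subseteq\cC_r$ for a radius $r>S$ with $f_r\notin\cC_r$) uses only $F=\fe G$, and already yields a contradiction. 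So the construction is unconditional.

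Next I would iterate with disjoint supports. Partition the radius index set, $\NN=\bigsqcup_{m\in\NN}R_m$, into infinitely many infinite blocks; for each $m$ run the construction of \Cref{ZZ_large_circuits_implies_H_neq_zero} using only radii $r\in R_m$, and — since $X$ is infinite and vertex-transitive — choose \emph{all} the translating automorphisms (over every $m$, every $r\in R_m$, every $i$) so that the supports of all the translated pieces are pairwise disjoint, obtaining norm-$1$ chains $F^{(m)}\in\Zsuf_1(X,\ZZ)$. I claim $\{[F^{(m)}]\}_{m\in\NN}$ is $\ZZ$-linearly independent, which forces $\dim\Huf_1(X,\ZZ)=\infty$. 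Suppose not: $\sum_{m\in M}a_m F^{(m)}\in\cC_\infty$ for a finite $M$ with every $a_m\neq 0$, say $\sum_{m\in M}a_m F^{(m)}=\fe G$ with $G$ uniformly bounded and supported on circuits of length $\le S$. Fix $m_0\in M$ and $r\in R_{m_0}$ with $r>S$. On the isolated region carrying the radius-$r$, approximation-$i$ piece of $F^{(m_0)}$ every other translated piece is absent, so there $\fe G$ coincides with $a_{m_0}$ times that translate of $f_r^{\,i}$; conjugating $G$ by the corresponding automorphism and letting $i\to\infty$ along a pointwise-convergent subsequence (possible by the uniform bound on $G$ and ULF, exactly as in \Cref{ZZ_large_circuits_implies_H_neq_zero}) gives $a_{m_0}f_r=\fe G_r\in\cC_S$.

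The main obstacle is this last step: from $a_{m_0}f_r\in\cC_S\subseteq\cC_r$ (as $S<r$) I want to conclude $f_r\in\cC_r$, i.e.\ to divide out the coefficient $a_{m_0}$ — and $\cC_r(X,\ZZ)$ need not be a pure subgroup of $\Zsuf_1(X,\ZZ)$ (torsion in $\Hsuf_1$ of the Rips complexes can obstruct this, in contrast to the coefficient-$1$ situation of \Cref{ZZ_large_circuits_implies_H_neq_zero}). I would remove the obstruction by choosing the witnesses more stringently: take each $f_r$ not merely in $\cC_\infty\setminus\cC_r$ but outside the saturation $\overline{\cC_r}:=\{v\in\Zsuf_1(X,\ZZ)\ :\ mv\in\cC_r\text{ for some }m\ge 1\}$, so that no non-zero multiple of $f_r$ lies in $\cC_r$ and $a_{m_0}f_r\in\cC_r$ is immediately absurd. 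The norm-$1$ reduction of \Cref{ZZ_large_circuits_implies_H_neq_zero} still applies — it terminates after boundedly many steps by ULF, so each element of $\cC_\infty$ is a \emph{finite} sum of norm-$1$ pieces, and since $\overline{\cC_r}$ is a subgroup at least one piece escapes it — provided one knows $\overline{\cC_r}\cap\cC_\infty\subsetneq\cC_\infty$ for every $r$. Establishing this properness is the delicate point, and is where vertex-transitivity re-enters: circuits of bounded length fall into finitely many $\Aut(X)$-orbits, so a non-membership can be transferred uniformly between translates and witnesses combined by the same compactness/Gaussian-elimination bookkeeping as in \Cref{prop:cC_has_thin_basis}, ruling out the degenerate possibility that $\cC_\infty/\cC_r$ is entirely torsion. (A variant avoiding the saturation altogether is to reduce the identity $a_{m_0}f_r=\fe G_r$ modulo a prime $p\nmid a_{m_0}$ and run the analogous non-membership over $\ZZ_p$.) Beyond this point the argument is just a routine ``spreading out'' of \Cref{ZZ_large_circuits_implies_H_neq_zero}.
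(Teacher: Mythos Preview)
Your approach mirrors the paper's: build countably many copies $F^{(m)}$ of the chain from \Cref{ZZ_large_circuits_implies_H_neq_zero} on pairwise disjoint collections of balls and argue that the classes $[F^{(m)}]$ are linearly independent. The paper dispatches the independence in one sentence (``by arguments of the proof before''); you actually trace through and correctly isolate the only non-obvious point: localising a relation $\sum_m a_m F^{(m)}\in\cC_S$ to one family of balls and passing to the limit yields $a_{m_0}f_r\in\cC_S$ rather than $f_r\in\cC_S$, so one must exclude torsion in $\cC_\infty/\cC_S$. This diagnosis is correct and applies equally to the paper's own argument. One remark on your setup: partitioning the radii among the $F^{(m)}$ is not what the paper does --- it uses the full family $(f_r)_r$ in every copy --- and the paper's choice is slightly better, since then the limiting argument gives $a_m f_r\in\cC_S$ for \emph{every} $m$ simultaneously, whence $\gcd_m(a_m)\cdot f_r\in\cC_S$ by B\'ezout; this rules out primitive relations, though it still does not eliminate torsion.

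The remaining gap is in your resolution of that torsion problem. The key claim --- that $\overline{\cC_r}\cap\cC_\infty\subsetneq\cC_\infty$ for every $r$, equivalently that $\cC_\infty/\cC_r$ is not purely torsion --- is asserted via a gesture at orbit-finiteness of short circuits and ``compactness/Gaussian-elimination bookkeeping'', but no argument is actually given, and I do not see how those ingredients assemble into one: knowing that $nc\in\cC_r$ for one circuit $c$ transfers to all its translates with the same $n$ and witness bound, but nothing prevents every element of $\cC_\infty$ from being torsion over $\cC_r$. The $\ZZ_p$ variant has the same defect: reducing $a_{m_0}f_r=\fe G_r$ modulo a prime $p\nmid a_{m_0}$ gives $\bar f_r\in\cC_S(X,\ZZ_p)$, but for a contradiction you need $\bar f_r\notin\cC_r(X,\ZZ_p)$, which does not follow from $f_r\notin\cC_r(X,\ZZ)$ and would have to hold for every prime not dividing the adversarial coefficient. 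So the argument is incomplete at precisely the step you flagged as delicate.
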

\begin{proof}
	Consider the function $f$ constructed in the proof of Proposition \ref{ZZ_large_circuits_implies_H_neq_zero}. In the same way it is possible to construct instead countably many functions $f_k$ each defined on balls $B^k_{r,i}$, which are the building blocks of the construction. Moreover these $f_k$ can be chosen to not have any intersecting balls between them. By arguments of the proof before they are all non-zero in homology and finite linear combinations between them (as well as bounded infinite linear combinations for that matter) are also non-zero. Hence the result follows. 
\end{proof}

	%auto-ignore

\section{Expansion}\label{section:expansion}

In this section, we follow a different trail in our search for a description of vanishing $\Huf_1(\cdot ,\ZZ)$.
Our motivation is twofold:
On the one hand, after playing a bit with $\Huf_1(\cdot ,\ZZ)$, one comes to the conclusion that, ends and “large circuits” aside, the main ingredient in non-vanishing of homology is a lack of triangles, compared to edges.
The typical example here is (a triangulated) $\ZZ^2$: Any infinite sum of parallel bips going in one direction will define a non-zero class in homology, since there are not enough triangles to kill all bips at the same time.
On the other hand, (classical) graph expansion can be interpreted as the ability, given any finite subset of vertices $U$, and element $f \in  \Zsuf_0(U)$, to push $f$ out of $U$ in a bounded way; that is, there exists some $g\in \Csuf_1(X)$ with $\partial g|U = f$ and $\|g\|\leq  {\varepsilon}^{-1}\|f\|$, where $\varepsilon $ is the Cheeger constant for $X$.
This section can then be described as an effort to try and mimic the “homological interpretation” of expansion in higher dimension (we call this $\Hsuf_n$-expansion), and fine-tune it to isolate the phenomenon of “lack of triangles” from “ends” and “large circuits” (thus getting \emph{pure} $\Hsuf_n$-expansion).

Recall that a (ULF) graph $X$ is said to be \emph{amenable} if it satisfies the following condition:
\[
    \forall  \epsilon >0 \ \exists  U {\subseteq}_f VX\ :\ |\partial U| < \varepsilon |U|.  
\]
If $X$ is non-amenable, the least $\varepsilon >0$ satisfying $\forall  U {\subseteq}_f VX$, $ |\partial U|\geq {\varepsilon}|U|$ is called the Cheeger constant for $X$.

The following was proved in~\cite{BW}:
\begin{proposition}[Part of {\cite[Theorem 3.1]{BW}}]\label{thm:amenability}
    A ULF graph $X$ is non-amenable iff $\Huf_0(X,\ZZ ) = 0$. 
\end{proposition}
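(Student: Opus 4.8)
This is a biconditional, and it is essentially \cite[Theorem 3.1]{BW}, so we only indicate the plan, treating the two implications separately. Throughout write $D$ for a bound on the degrees of $X$.

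For the direction \emph{non-amenable $\Rightarrow$ $\Huf_0(X,\ZZ)=0$}, the plan is to show that $\partial\colon\Csuf_1(R_rX,\ZZ)\to\Csuf_0(R_rX,\ZZ)=l^\infty(VX,\ZZ)$ is surjective for every $r$; since $\Huf_0(X,\ZZ)$ is the direct limit of the $\Hsuf_0(R_rX,\ZZ)$ this gives $\Huf_0(X,\ZZ)=0$. Since $X$ is non-amenable, let $\varepsilon>0$ be its Cheeger constant; as $R_rX$ has at least as many edges as $X$, its Cheeger constant is also $\geq\varepsilon$. Fix $f\in l^\infty(VX,\ZZ)$; writing $f=f^{+}-f^{-}$ it suffices to realise a nonnegative $f$ as a boundary. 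Fix a basepoint, exhaust $VX$ by balls $B_n$, and on a sufficiently large ball $B_{n'}$ build a transportation network: put supply $f(v)$ at each $v\in B_n$, adjoin one sink $\ast$ joined to every vertex of $B_{n'}\setminus B_n$, and assign every edge (original or to $\ast$) capacity $M:=\lceil\|f\|_\infty/\varepsilon\rceil$. By the max-flow/min-cut theorem with integral capacities there is a feasible \emph{integral} flow meeting all supplies, because any cut isolating a set $S$ of supply vertices from $\ast$ consists of edges of total capacity at least $M\,|\partial_{R_rX}S|\geq M\varepsilon|S|\geq\|f\|_\infty|S|$, which dominates $\sum_{v\in S}f(v)$. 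This produces $g_n$ on $B_{n'}$ with $\|g_n\|_\infty\leq M$ and $\partial g_n=f$ on $B_n$. As the $g_n$ are uniformly bounded they have a pointwise cluster point $g\in\Csuf_1(R_rX,\ZZ)$ with $\|g\|_\infty\leq M$, and $\partial g=f$ since $\partial$ is a finite local sum at each vertex and every vertex lies in $B_n$ for $n$ large.

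For the direction \emph{amenable $\Rightarrow$ $\Huf_0(X,\ZZ)\neq0$}, the plan is to detect the fundamental class $[X]$ by a nonzero invariant-mean-type functional. Take a F\o lner sequence $(U_n)$, i.e.\ $|\partial U_n|/|U_n|\to0$, set $\mu_n:=|U_n|^{-1}\mathbf 1_{U_n}\in l^1(VX)\subseteq l^\infty(VX)^{\ast}$, and let $\mu$ be a weak-$\ast$ cluster point, so $\mu(\mathbf 1)=1$. Then $\mu$ annihilates every boundary: for $g\in\Csuf_1(R_rX)$ the sum $\langle\mu_n,\partial g\rangle=|U_n|^{-1}\sum_{v\in U_n}\partial g(v)$ telescopes into a sum over the edges of $R_rX$ with exactly one endpoint in $U_n$, so $|\langle\mu_n,\partial g\rangle|\leq\|g\|_\infty\,|U_n|^{-1}\cdot\#\{\text{such edges}\}$, and by uniform local finiteness this edge count is $O(D^{2r}|\partial U_n|)=o(|U_n|)$; hence $\langle\mu,\partial g\rangle=0$. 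Letting $r\to\infty$, $\mu$ descends to a homomorphism $\Huf_0(X,\ZZ)\to\RR$ with $\mu([X])=1$, so $[X]\neq0$ and in particular $\Huf_0(X,\ZZ)\neq0$.

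\textbf{Main obstacle.} The delicate step is the first implication: turning the slogan ``non-amenability lets one flush any uniformly bounded mass out to infinity'' into an honest uniformly bounded integral $1$-chain. This hinges on (i) choosing the network at the ball boundary so that min-cut feasibility reduces exactly to the Cheeger inequality, (ii) integrality of optimal network flows, and (iii) the compactness/diagonal passage to the infinite limit; one must also make sure the Cheeger constant is inherited by every Rips complex $R_rX$, which is what lets the argument run uniformly in $r$.
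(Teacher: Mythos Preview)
The paper does not give its own proof of this proposition at all: it is simply quoted as ``Part of \cite[Theorem~3.1]{BW}'' and used as a black box. There is therefore nothing to compare against here; your sketch is essentially the standard Block--Weinberger argument (max-flow/min-cut on finite exhaustions plus a compactness limit for the forward direction, a F\o lner-averaged mean annihilating boundaries for the backward direction), and it is correct in outline.

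Two minor remarks on the write-up. First, in the min-cut step you phrase the condition only for sets $S$ consisting of supply vertices; strictly speaking you must verify it for every source-side set not containing $\ast$, and when $S$ meets $B_{n'}\setminus B_n$ the edges from $S$ to $\ast$ are what make the inequality work --- this is routine, but should be said. Second, you do not actually need to run the argument for every Rips complex $R_rX$: since $\Zsuf_0(R_rX)=l^\infty(VX)$ is independent of $r$ while $\Bsuf_0(R_1X)\subseteq\Bsuf_0(R_rX)$, surjectivity of $\partial$ at $r=1$ already forces $\Hsuf_0(R_rX)=0$ for all $r$, hence $\Huf_0(X)=0$.
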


\subsection{Basics}

Let $X$ be a ULF simplicial complex.
If $U\subseteq VX$ is a set of vertices, we also use $U$ for the induced simplicial subcomplex.
We then write $\partial U$ for the simplices of $X$ adjacent to a vertex not in $U$.

\begin{definition}[$\Hsuf$-Expansion]
    We say that $X$ has $n$-dimensional $\Hsuf$-expansion if:
    There exists a function $K:\NN {\rightarrow}\NN $ satisfying the following.
    Fix any finite set of vertices $U{\subseteq}_f VX$, and any $f \in  \Csuf_n(U,\ZZ )$ satisfying:
    \begin{enumerate}
        \item 
            $\partial f$ has support contained in $\partial U$.
        \item\label{extensible}
            $f$ can be extended to some $\hat f \in  \Zsuf_n(X,\ZZ )$ with $\|\hat f\| =  \|f\|$ (by extended, we mean $\hat f |U = f$).
    \end{enumerate}
    Then: There exists $g\in \Csuf_{n+1}(X,\ZZ )$ such that $\partial g|_U = f$ and $\|g\|\leq K(\|f\|)$.

    We say that a a function $K$ as above is a certificate to $\Hsuf_n$-expansion.

    %Finally, if $X$ does not have $\Hsuf_n$-expansion, we say that it is \emph{for good reasons} if there always exist some $g$ with $\partial g|_U$, but its norm cannot be bounded.
    Finally, if~\Cref{extensible} in the definition is replaced by
    \begin{enumerate}
        \item[3.]
            $f$ can be extended to some $\hat f\in  \Bsuf_n(X,\ZZ)$ with $\|\hat f\| =  \|f\|$ .
    \end{enumerate}
    and the rest is leaved as-is, then the obtained condition is called \emph{pure} $\Hsuf_n$-expansion.

    %Postfinally, we will say that $X$ has \emph{pure} $\Hsuf_n$-expansion if it has expansion as defined above, except that a further restriction is posed on the extension $\hat f$: it has to lie in $\Bsuf_n(X,\ZZ)$.
    %In other words, any $(U,f)$ that is the local restriction of a boundary is the local restriction of a ??
\end{definition}

Intuitively, expansion can be interpreted as homological cycles locally being homological boundaries, with control.
In other words, one can always kill “pieces of cycles” with “pieces of boundaries” in a bounded way.
We will see that this property \emph{always} implies that simplicial uniformly finite homology vanishes, at the corresponding dimension.
Pure expansion is slightly harder to describe, but the core idea is that it only takes into account those $f$ that “can actually be killed” by some boundary.

Note that 
\[
    \text{$\Hsuf_n$-expansion} \quad \Rightarrow  \quad \text{pure $\Hsuf_n$-expansion}.
\]

\begin{lemma}\label{expansion_implies_Huf_vanishes}
    If $X$ has $\Hsuf_n$-expansion, then $\Hsuf_n(X,\ZZ ) = 0$.
\end{lemma}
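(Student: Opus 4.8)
The plan is to show that an arbitrary cycle $z \in \Zsuf_n(X,\ZZ)$ is a boundary, using the $\Hsuf_n$-expansion certificate $K$ applied on a suitable exhaustion of $X$. First I would fix an increasing exhaustion $VX = \bigcup_m U_m$ of the vertex set by finite connected sets $U_m$, chosen so that the distances $d(U_m, VX \setminus U_{m+1})$ are large (say, growing without bound), so that the "collars" $U_{m+1}\setminus U_m$ are thick. The idea is to peel off $z$ piece by piece: on the innermost region $U_0$, the restriction $f := z|_{U_0}$ (together with its chain data on the simplices meeting $U_0$) is a candidate for the expansion definition — it is already extendable to the honest cycle $z$ with $\|z\| = \|z\|$ (condition~\ref{extensible} is met with $\hat f = z$), provided we arrange $\partial f$ to sit in $\partial U_0$. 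So there is $g_0 \in \Csuf_{n+1}(X,\ZZ)$ with $\partial g_0|_{U_0} = f$ and $\|g_0\| \le K(\|z\|)$. Then $z - \partial g_0$ is a cycle vanishing on $U_0$, and one repeats on the next collar.

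The key steps, in order: (1) set up the exhaustion and make precise the "restriction with controlled boundary" operation so that each local piece $f_m$ genuinely satisfies the hypotheses of $\Hsuf_n$-expansion — here one uses that $z$ is a cycle so $\partial(z|_{U_m})$ is automatically supported near $\partial U_m$; (2) inductively produce chains $g_m$ with $\partial g_m$ agreeing with the residual cycle on the collar $U_m \setminus U_{m-1}$, each of norm $\le K(\|z\|)$, and with supports of the $g_m$ confined to bounded neighbourhoods of the respective collars; (3) check that because consecutive collars are far apart (by the thickness arrangement in step~1), the supports of the $g_m$ are \emph{locally finite} — any given simplex is touched by only finitely many $g_m$ — so that $g := \sum_m g_m$ is a well-defined element of $\Csuf_{n+1}(X,\ZZ)$; (4) verify $\|g\| \le C\cdot K(\|z\|)$ for a constant $C$ depending only on the local finiteness bound (which depends only on ULF and the exhaustion), and that $\partial g = z$ by a telescoping computation, since each collar is eventually fully corrected. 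Conclude $z \in \Bsuf_n(X,\ZZ)$, hence $\Hsuf_n(X,\ZZ) = 0$.

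The main obstacle I expect is step~(1)/(2): massaging the genuinely \emph{global} cycle $z$ into a sequence of \emph{finite} local chains $f_m$ that literally meet the definition of $\Hsuf_n$-expansion, in particular the requirement that $\partial f_m$ be supported in $\partial U_m$ and that $f_m$ extend to a \emph{cycle} of the \emph{same} norm. One has to absorb the residual $z - \sum_{k<m}\partial g_k$ — which is a cycle supported outside $U_{m-1}$ but not exactly outside $U_m$ — and define $f_m$ as its restriction to the collar, then note that as a chain on the finite set $U_m$ its boundary lands in $\partial U_m$ precisely because the residual is a cycle; the honest cycle extending it (with equal sup-norm) is the residual itself. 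The norm bookkeeping in step~(4) and the local-finiteness in step~(3) are then routine given that the collars were spread out; the one subtlety is ensuring $\mathrm{supp}(g_m)$ does not creep too far beyond $U_m \setminus U_{m-1}$, which is controlled by the fact that $\|f_m\| \le \|z\|$ is uniformly bounded so $K(\|f_m\|) \le K(\|z\|)$ is a single constant, and the support of a chain of bounded norm produced by the certificate can, after the fact, be truncated to the bounded neighbourhood where it is actually needed.
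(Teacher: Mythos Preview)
Your peeling/telescoping scheme has a genuine gap in the norm bookkeeping. You assert that $\|f_m\| \le \|z\|$, but $f_m$ is the restriction to $U_m$ of the \emph{residual} $z_m = z - \sum_{k<m}\partial g_k$, not of $z$ itself. Even after truncating $g_{m-1}$ to a bounded neighbourhood $U_{m-1}^{+}$ of $U_{m-1}$ (which is indeed legitimate, as you note), the chain $\partial g_{m-1}$ is still present on the transition strip $U_{m-1}^{+}\setminus U_{m-1}$, and there $z_m = z - \partial g_{m-1}$ can have sup-norm as large as $\|z\| + D\cdot\|g_{m-1}\| \le \|z\| + D\,K(\|f_{m-1}\|)$, with $D$ a ULF degree bound. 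Iterating gives $\|f_m\| \le \|z\| + D\,K(\|f_{m-1}\|)$, which in general blows up (already for $K$ linear). Consequently $\|g_m\|\le K(\|f_m\|)$ is no longer a single constant, and the sum $\sum_m g_m$ need not lie in $\Csuf_{n+1}$. Thickening the collars does not help: it separates the \emph{supports} of the $g_m$, but each successive $f_m$ still inherits the inflated norm on the inner edge of its collar.

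The paper's proof sidesteps this entirely by dropping the residuals and using compactness instead. One applies expansion directly to $(U_m,\, z|_{U_m})$ for each $m$ (with $\hat f = z$, so $\|f_m\|=\|z\|$ once $U_0$ is large enough), obtaining $g_m$ with $\partial g_m|_{U_m} = z|_{U_m}$ and a \emph{uniform} bound $\|g_m\|\le K(\|z\|)$. The sequence $(g_m)_m$ then lives in the compact set $\{-K(\|z\|),\dots,K(\|z\|)\}^{X_{(n+1)}}$ for the pointwise topology, hence has an accumulation point $g$; continuity of $\partial$ (ULF) and $\partial g_m|_{U_m}\to z$ give $\partial g = z$. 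This avoids any inductive norm growth. If you want to repair your argument, the cleanest route is to abandon the telescoping sum and pass to such a limit.
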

\begin{proof}
    Assume that $K$ is a certificate.
    Fix $f \in  \Zsuf_n(X,\ZZ )$, and $\langle U_n \subseteq  VX{\rangle}_n$ a nested sequence of finite subsets exhausting $VX$.
    For each $n$, let $f_n$ the restriction of $f$ to $U_n$, and assume that $\|f_n\| = \|f\|$ for each $n$ (achievable by starting with $U_0$ large enough).
    Since $\|f\| = \|f_n\|$, let $g_n \in  \Cuf_{n+1}(X,\ZZ )$ satisfying $\partial g_n|U_n = f_n$ and $\|g_n\| \leq  K(\|f\|)$, as given by $\Hsuf_n$-expansion.

    Then the sequence $\langle g_n{\rangle}_n$ has an accumulation point, say $g$, since its elements are uniformly bounded (by $K(\|f\|)$).
    Since $\partial g$ is also an accumulation point of $(\partial g_n)_n$, which converges to $f$, we conclude that $\partial g = f$.

    Thus, $f \in  \Buf_n(X,\ZZ )$ and $\Hsuf_n(X,\ZZ ) = 0$.
\end{proof}

\begin{remark}
    Consider the following condition on $X$:
    There exists some constant $N$ such that for any $f\in \Zsuf_n(X,\ZZ)$, one can write $f = \sum_{i=1}^{N\|f\|}f_i$, with each $f_i$ of norm $1$ and in $\Zsuf_n(X,\ZZ)$.
    If this condition held, we could always take the function $K:\NN\rightarrow \NN$ in $\Hsuf_n$-expansion to be linear.
    We do not know whether it holds, even in dimension $1$ (that is: can one decompose any flow of norm $k$ into a sum of $Nk$ flows of norm $1$, for some universal $N$ ?)
\end{remark}

In the zero-dimensional case, $\Hsuf$-expansion is actually equivalent to non-amenability. Moreover since $B_0^{suf} = Z_0^{suf}$, also pure $\Hsuf_0$ expansion coincides with $\Hsuf_0$ expansion.
\begin{proposition}[$\Hsuf_0$-expansion is just expansion.]
    Let $X$ be a ULF graph.
    Then $X$ has $\Hsuf_0$-expansion iff $X$ is non-amenable.
\end{proposition}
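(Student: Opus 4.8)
My plan is to prove the two implications separately; the forward implication is essentially bookkeeping once one unwinds the definition at $n=0$, and the reverse implication is the $1$-dimensional form of the "homological interpretation of expansion" that opens this section.

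\emph{Forward direction ($\Hsuf_0$-expansion $\Rightarrow$ non-amenable).} First I would observe that at $n=0$ both hypotheses in the definition of $\Hsuf_0$-expansion are automatic: $\Csuf_{-1}$ is trivial so the condition on $\partial f$ is vacuous, and $\Zsuf_0 = \Csuf_0$ so any $f\in\Csuf_0(U,\ZZ)$ extends by zero to some $\hat f\in\Zsuf_0(X,\ZZ)$ with $\|\hat f\|=\|f\|$. Thus $\Hsuf_0$-expansion merely says: there is $K:\NN\to\NN$ such that every $f\in\ell^\infty(U,\ZZ)$ (with $U$ finite) equals $(\partial g)|_U$ for some $g\in\Csuf_1(X,\ZZ)$ with $\|g\|\leq K(\|f\|)$. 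I would apply this to the indicator $f=\mathbf 1_U$ of an arbitrary finite $U\subseteq VX$ (norm $1$), obtaining $g$ with $(\partial g)|_U=\mathbf 1_U$ and $\|g\|\leq K(1)$. Summing the vertex equation over $v\in U$, the contributions of edges internal to $U$ cancel in pairs by the sign convention, so $|U|=\sum_{v\in U}\partial g(v)=\sum_{e\in\partial_X U}\pm g(e)$, whence $|U|\leq K(1)\,|\partial_X U|$. As $U$ was arbitrary this gives the Cheeger inequality $|\partial_X U|\geq K(1)^{-1}|U|$, i.e.\ $X$ is non-amenable. (Alternatively, $\Hsuf_0$-expansion gives $\Hsuf_0(X,\ZZ)=0$ by \Cref{expansion_implies_Huf_vanishes}; since $\Csuf_0(R_rX)=\Csuf_0(X)$ and $\Bsuf_0(X)\subseteq\Bsuf_0(R_rX)$, the group $\Huf_0(X,\ZZ)$ is a quotient of $\Hsuf_0(X,\ZZ)$, hence also zero, and non-amenability follows from \Cref{thm:amenability}.)

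\emph{Reverse direction (non-amenable $\Rightarrow$ $\Hsuf_0$-expansion).} Let $\varepsilon>0$ be the Cheeger constant of $X$, fix a finite $U$ and $f\in\Csuf_0(U,\ZZ)$ with $\|f\|=k$. Writing $f=f^{+}-f^{-}$ with $f^{\pm}\geq 0$ of norm $\leq k$, it suffices to produce, for $f^{+}$ (and symmetrically $f^{-}$), a nonnegative chain $g^{+}\in\Csuf_1(X,\ZZ)$ with $(\partial g^{+})|_U=f^{+}$ and $\|g^{+}\|\leq\lceil k/\varepsilon\rceil$, and then set $g=g^{+}-g^{-}$. To build $g^{+}$ I would set up a finite flow network on a large ball $B$ around $U$: a source $s$ joined to each $v\in U$ by an arc of capacity $f^{+}(v)$, a sink $t$ fed from the boundary sphere of $B$ by arcs of infinite capacity, and every edge of $X$ inside $B$ (in both orientations) given capacity $C:=\lceil k/\varepsilon\rceil$. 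The key point is that the minimum cut equals $\sum_{v\in U}f^{+}(v)$: a finite cut has its source-side set $S'\subseteq B$ disjoint from the boundary sphere, so the crossed capacity-$C$ arcs are exactly the $X$-edge-boundary $\partial_X S'$, and the cut value is $\sum_{v\in U}f^{+}(v)-\sum_{v\in U\cap S'}f^{+}(v)+C\,|\partial_X S'|$, which is $\geq\sum_{v\in U}f^{+}(v)$ because $C\,|\partial_X S'|\geq \tfrac{k}{\varepsilon}\cdot\varepsilon|S'|=k|S'|\geq\sum_{v\in U\cap S'}f^{+}(v)$; the cut $S=\{s\}$ attains equality. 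The integral max-flow--min-cut theorem then yields an integral flow of this value, and restricting it to the edges of $X$ gives $g^{+}$ with $\|g^{+}\|\leq C$ and, by Kirchhoff's law at the vertices of $U$ (which carry no arc to $t$), $(\partial g^{+})|_U=f^{+}$. Thus $K(k):=2\lceil k/\varepsilon\rceil$ is a certificate.

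I expect the only genuinely delicate step to be the min-cut estimate in the reverse direction — specifically, making sure the source-side set $S'$ is a bona fide finite subset of $VX$ whose $X$-edge-boundary is precisely the family of capacity-$C$ arcs crossed, so that the Cheeger inequality applies verbatim; the truncation to a finite ball $B$ (then letting $B$ grow, or invoking infinite max-flow--min-cut directly) is a routine device. Everything else is formal. One could also simply cite the quantitative form of \Cref{thm:amenability} from \cite{BW} for this direction.
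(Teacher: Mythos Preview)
Your argument is correct. Both directions are essentially the same strategy as the paper's, but you carry out the details more explicitly and more elementarily. For the forward direction, the paper simply chains \Cref{expansion_implies_Huf_vanishes} with \Cref{thm:amenability} (your ``alternative''), whereas your primary argument extracts the Cheeger inequality directly from $f=\mathbf 1_U$; this is cleaner and avoids invoking the Block--Weinberger theorem. For the reverse direction, the paper decomposes $f$ into a sum of $0/1$-valued functions and then cites the flow construction of \cite[Lemma 2.1]{BS}, while you decompose as $f^{+}-f^{-}$ and run an explicit max-flow--min-cut argument; the content is the same (the min-cut estimate is exactly where the Cheeger constant enters in both), and your certificate $K(k)=2\lceil k/\varepsilon\rceil$ matches the paper's implicit linear bound. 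One cosmetic point: the chain $g^{+}$ you obtain need not be literally nonnegative once you net the two orientations of each edge, but of course the bound $\|g^{+}\|\leq C$ is all that is used.
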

\begin{proof}
    By~\Cref{thm:amenability} and~\Cref{expansion_implies_Huf_vanishes}, we get $\Hsuf_0$-expansion $\Rightarrow $ non-amenability.

    Now for the converse:
    By decomposing such an $f:U\rightarrow \ZZ$ into a sum of $0/1$-valued function, the condition reduces to:
    \begin{itemize}
    \item
        There exists $K\in {\NN}$ such that for all $U{\subseteq}_f VX$ and $W\subseteq U$, there exists $g:EX\rightarrow \ZZ$ with $\partial g|U = {\chi}_W$ and $\|g\|\leq K$.
    \end{itemize}

    Assume now that $X$ is non-amenable, with Cheeger constant $\varepsilon $.
    By adapting the proof of \cite[Lemma 2.1]{BS} (more precisely, their construction of a flow), we see that given $U {\subseteq}_f VX$ and $W\subseteq  U$, one can construct a $g$ satisfying $\partial g|U = {\chi}_W$.
    Since in their argument, the “flux” (=excess flow) at vertices in $W$ is $\varepsilon $ and the flow at edges at most $1$, scaling by ${\varepsilon}^{-1}$ yields the bound of $K:= {\varepsilon}^{-1}$ for $g$.

\end{proof}

\subsection{Dimension $1$}

\begin{theorem}\label{transitive_graphs_expansion}
    If $X$ is an infinite, vertex transitive ULF simplicial complex and $\Hsuf_1(X,\ZZ ) = 0$, then $X$ has $\Hsuf_1$-expansion. 
\end{theorem}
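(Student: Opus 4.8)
The plan is to reduce $\Hsuf_1$-expansion to a single uniform filling estimate, and then to extract that estimate by a Baire category argument which becomes available precisely because all coefficients lie in $\ZZ$ and everything is locally finite. For $K\in\NN$ write $\tilde C_K:=\partial\bigl(\{g\in\Csuf_2(X,\ZZ):\|g\|\le K\}\bigr)\subseteq\Zsuf_1(X,\ZZ)$ and, for $\hat f\in\Zsuf_1(X,\ZZ)$, set $\mathrm{fill}(\hat f):=\min\{K:\hat f\in\tilde C_K\}$; this minimum exists because $\Hsuf_1(X,\ZZ)=0$ says exactly that $\Zsuf_1(X,\ZZ)=\bigcup_K\tilde C_K$. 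The reduction is immediate: given $U,f$ and an extension $\hat f\in\Zsuf_1(X,\ZZ)$ of $f$ with $\|\hat f\|=\|f\|$ as in the definition, any $g$ with $\partial g=\hat f$ and $\|g\|=\mathrm{fill}(\hat f)$ has $\partial g|_U=\hat f|_U=f$, so it suffices to produce $K:\NN\to\NN$ with $\mathrm{fill}(\hat f)\le K(\|\hat f\|)$ for all $\hat f\in\Zsuf_1(X,\ZZ)$. Since $X$ is vertex-transitive all of its connected components are isomorphic, so --- handling $U$ one component at a time --- it is enough to prove this when $X$ itself is a single connected, vertex-transitive, ULF complex; if that complex is finite the estimate is elementary finite-dimensional linear algebra from $H_1(\cdot,\ZZ)=0$, so from now on I would assume $X$ connected and infinite, hence countable.

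For the main step, equip $\ZZ^{X_{(1)}}$ with the product topology ($\ZZ$ discrete); it is Polish, and $\Zsuf_1(X,\ZZ)$ is a closed --- hence Polish, hence Baire --- subspace, being cut out by the conditions $\partial\hat f(v)=0$, each involving only finitely many coordinates by ULF. Each $\tilde C_K$ is the image of the compact cube $\prod_{X_{(2)}}\{-K,\dots,K\}$ under $\partial$, which is continuous in these topologies (the incidence matrix is locally finite, again by ULF), so $\tilde C_K$ is compact, in particular closed. Since $\Zsuf_1(X,\ZZ)=\bigcup_K\tilde C_K$, the Baire property gives a $K_0$ for which $\tilde C_{K_0}$ has nonempty interior, hence a finite edge set $W$ and a cycle $\hat f_0$ with $\{\hat f\in\Zsuf_1(X,\ZZ):\hat f|_W=\hat f_0|_W\}\subseteq\tilde C_{K_0}$. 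As $\mathrm{fill}$ is symmetric ($\mathrm{fill}(-\hat h)=\mathrm{fill}(\hat h)$) and subadditive, every cycle $\hat h$ vanishing on $W$ then satisfies $\mathrm{fill}(\hat h)\le\mathrm{fill}(\hat f_0+\hat h)+\mathrm{fill}(-\hat f_0)\le 2K_0$; and for arbitrary $\hat f$ with $\|\hat f\|\le m$ the restriction $\hat f|_W$ is one of the finitely many functions $W_{(1)}\to\{-m,\dots,m\}$, so fixing a cycle $\hat f_v$ with each realizable restriction $v$ and putting $M(m):=\max_v\mathrm{fill}(\hat f_v)<\infty$ yields $\mathrm{fill}(\hat f)\le\mathrm{fill}(\hat f_v)+\mathrm{fill}(\hat f-\hat f_v)\le M(m)+2K_0=:K(m)$, since $\hat f-\hat f_v$ vanishes on $W$. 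This $K$ is the required certificate.

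The part I expect to be the real obstacle is this last juggling with integer coefficients. Over $\RR$ the open mapping theorem would at once give a linear bound $\mathrm{fill}(\hat f)\le C\|\hat f\|$, but over $\ZZ$ one cannot rescale, so one is forced to (i) move the interior point of $\tilde C_{K_0}$ to $0\in\tilde C_{2K_0}$ by hand, via symmetry and subadditivity of $\mathrm{fill}$, and (ii) absorb the bounded boundary data carried by $W$ using that there are only finitely many possibilities once $\|\hat f\|$ is fixed. It is point (ii) that forces the certificate to be an arbitrary function of $\|f\|$ rather than linear, and the whole scheme makes essential use of local finiteness (ULF) and of countability --- which is where vertex-transitivity is used.
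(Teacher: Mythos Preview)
Your argument has a gap at the very point where you invoke Baire. You claim that $\Zsuf_1(X,\ZZ)$ is closed in $\ZZ^{X_{(1)}}$ because it is ``cut out by the conditions $\partial\hat f(v)=0$'', but those conditions only cut out the group $Z_1:=\ker\partial$ of \emph{all} cycles; the uniformly-finite ones satisfy the further constraint $\|\hat f\|_\infty<\infty$, which is not closed in the product topology. In fact $\Zsuf_1(X,\ZZ)$ is essentially \emph{never} a Baire space in the situation at hand: writing $Z_m:=\{\hat f\in\Zsuf_1:\|\hat f\|\le m\}$ one has $\Zsuf_1=\bigcup_m Z_m$ with each $Z_m$ closed, and each $Z_m$ has empty interior in $\Zsuf_1$ --- for any finite edge set $W$ and any $\hat f\in Z_m$, take a nonzero finitely-supported cycle $\hat h$ (say $\partial\sigma$ for a $2$-simplex $\sigma$ far from $W$; these exist once $\Zsuf_1\ne 0$ and $\Hsuf_1=0$) and note that $\hat f+N\hat h$ agrees with $\hat f$ on $W$ but has norm $>m$ for large $N$. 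Thus $\Zsuf_1$ is meagre in itself, and since each $\tilde C_K$ sits inside some $Z_{DK}$ it too has empty interior, so the argument never starts. Working instead in the compact slice $Z_m$ does give, by Baire, a basic relatively-open set $\{\hat f\in Z_m:\hat f|_W=\hat f_0|_W\}\subseteq\tilde C_{K_0}$, but your translation step $\hat f\mapsto\hat f-\hat f_v+\hat f_0$ then throws you out of $Z_m$, and iterating over larger slices does not terminate.

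The paper's proof uses transitivity in an essential geometric way that your scheme was trying to avoid: it takes a sequence $(U_i,f_i)$ of counterexamples with uniformly bounded norm but filling norm $\to\infty$, replaces each by a \emph{finitely-supported} cycle $f_i'$ (via \Cref{finite_extension}), uses transitivity to translate the supports to be pairwise disjoint, and sums them into a single $F\in\Zsuf_1(X,\ZZ)$; any $G$ with $\partial G=F$ then fills all $f_i$ simultaneously with the fixed norm $\|G\|$, a contradiction. This disjoint packing turns infinitely many filling problems into one --- it is exactly the step that substitutes for a working Baire principle.
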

\begin{proof}
    Suppose $X$ does not have $\Hsuf_1$-expansion but $\Hsuf_1(X,\ZZ) = 0$.
    Then, there exist sequences $\langle U_i {\subseteq}_f VX\rangle $ and $\langle f_i \in  \Csuf_1(X,\ZZ){\rangle}_i$ as in the definition, with $\|f_i\|\leq C$ for some $C$, but such that if $\partial g_i|U_i = f_i$, then $\|g_i\|\geq K_i$, with $K_i \rightarrow  \infty $
    (the sequence $\langle f_i{\rangle}_i$ becomes “harder” to kill with $i$ increasing).

    Let $f_i'\in \Zsuf_1(X,\ZZ)$ be an extension of $f_i$, as in the definition, but with finite support (their existence is proven in \Cref{finite_extension}).
    Since $X$ is infinite and transitive, one can assume that the support of all $f_i'$s are pairwise disjoint.

    Let then $F = \sum_i f_i'$.
    $F \in  \Zsuf_1(X,\ZZ)$ implies the existence of some $G \in  \Cuf_2(X,\ZZ)$ with $\partial G = F$.
    In particular, $\partial G|U_i = f_i$, and since $\|G\|\leq K_i$ for large enough $i$, we get a contradiction.
\end{proof}

Note that we do not actually need as much as transitivity, but merely being able to move finite pieces of the graphs sufficiently far, so that, e.g. having a cocompact action by the automorphism group is enough. 

\begin{lemma}\label{finite_extension}
    Assume $X$ is ULF and one-ended.
    Let $U{\subseteq}_f VX$ and $f \in  \Csuf_1(U,\ZZ)$ satisfying $\partial f \subseteq  \partial U$ and extensible to $\hat f \in  \Zsuf_1(X,\ZZ)$ with $\|\hat f\| = \|f\|$.
    Then $\hat f$ can be taken with finite support.

\end{lemma}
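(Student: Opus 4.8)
The plan is to start from the given extension $\hat f \in \Zsuf_1(X,\ZZ)$ of $f$, which has $\|\hat f\| = \|f\|$ and agrees with $f$ on $U$, and to surgically modify it outside $U$ so that its support becomes finite, while keeping it a cycle and not changing it on $U$. The key observation is that $\hat f$ restricted to the complement of $U$ is a flow whose only ``sources and sinks'' come from $\partial U$: since $\hat f$ is a cycle on all of $X$ and $\partial f$ is supported on $\partial U$, the chain $\hat f - f$ (extended by $0$ on the edges inside $U$) has boundary supported on the finitely many vertices of $U$ meeting $\partial U$. So I am looking at a bounded $\ZZ$-flow on the (one-ended) graph $X$ with finitely many excess vertices, and I want to replace it by one with finite support that has the same excess at those finitely many vertices.

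The main step is a ``flow-retraction'' argument exploiting one-endedness. First I would decompose the tail of $\hat f$ (i.e.\ $\hat f$ outside a large ball $B(v_0,r)$ containing $U$) into circuits and bips by ``following paths'', as in the Preliminaries, with a uniform bound on multiplicity over each edge. The circuits lying entirely outside $B(v_0,r)$ contribute nothing essential and can simply be deleted (subtracting a thin sum of circuits does not change the boundary, and in particular does not touch $U$); what remains of the circuit part is a finite collection. The bips are the real issue: each bip is a bi-infinite path, hence has infinite support. But because $X$ is \emph{one-ended}, the two ends of any bip $b$ are the same end of $X$, so for $R$ large enough the two rays of $b$ leaving $B(v_0,R)$ can be joined by a path $p_b$ in $X - B(v_0,R)$; replacing $b$ by the circuit $b|_{B(v_0,R)} \cup p_b$ changes $b$ only far away from $U$ and turns it into a \emph{finite} cycle. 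Doing this simultaneously for all bips — choosing the radii $R$ increasing fast enough that the connecting paths are thin — and using the uniform multiplicity bound, I get a bounded correction chain supported outside $U$ whose effect is to replace $\hat f$ by a cycle with finite support that still restricts to $f$ on $U$. (This is essentially the ``pushing to the tree'' / ``projecting bips to circuits'' manoeuvre already used in the proofs of \Cref{decomp_Z2} and \Cref{tree_injection}, specialised to the one-ended case where there is a single end to route everything through.)

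The last thing to check is that the norm stays under control — but here the statement only asks for \emph{some} finite-support extension, with no quantitative bound, so it suffices that the uniform multiplicity bound from the path-decomposition and the ULF hypothesis keep everything $\ell^\infty$. The main obstacle I anticipate is bookkeeping: making precise that the infinitely many simultaneous bip-to-circuit surgeries assemble into a single well-defined bounded chain, i.e.\ that the collection of connecting paths $p_b$ can be chosen to form a thin (locally finite) family. This is handled exactly as in the earlier proofs: process the bips in order of the radius at which they first escape, and at stage $i$ route only through the annulus between $B(v_0,R_{i-1})$ and $B(v_0,R_i)$, so that each edge is used by only finitely many of the $p_b$'s, with a uniform bound coming from ULF.
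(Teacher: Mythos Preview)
Your core idea—use one-endedness to close up the infinite pieces of the flow—is exactly the engine the paper uses. But the execution you describe does \emph{not} produce a finite-support cycle, and this is a genuine gap.

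The problem is the step where you close up ``all bips simultaneously'' with increasing radii. Even if the connecting paths $p_b$ are arranged to be thin, what you obtain is a thin, bounded sum of \emph{infinitely many} finite circuits (one per bip), which still has infinite total support. So your surgery replaces $\hat f$ by another bounded cycle agreeing with $f$ on $U$, but not by one with finite support. The analogy with \Cref{decomp_Z2} is misleading here: there the goal was merely to push onto a tree (infinite support is fine), whereas here you really need finiteness. A smaller issue: the restriction of $\hat f$ to $X\setminus B(v_0,r)$ is not a cycle (it has boundary on $\partial B$), so its path-decomposition contains rays and finite paths in addition to circuits and bips; you never mention these, and it is precisely the rays that carry the interesting information.

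The fix is much simpler than the infinite bookkeeping you propose. Decompose $\hat f$ \emph{globally} into circuits and bips. Only finitely many of these can touch the ball $B\supseteq U$, since $B$ has finitely many edges and the multiplicity over each edge is bounded. Everything disjoint from $B$—circuits \emph{and} bips alike—can be deleted outright (each is a cycle supported away from $U$). You are left with finitely many circuits (already finite support) and finitely many bips through $B$; close each of the latter with a single connecting path outside one sufficiently large ball, using one-endedness. No thinness argument is needed. This is essentially how the paper proceeds: it observes that $\partial f=\sum m_i v_i$ with $\sum m_i=0$, so any extension has only finitely many rays escaping to infinity (at most $\sum|m_i|$ of them), encloses the finite part in a ball, and then pairs rays of opposite sign by paths outside that ball.
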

\begin{proof}
    That $f$ can be extended to $\hat f \in  \Zsuf_1(X,\ZZ)$ implies the existence of a finite number of vertices $v_1,\ldots,v_n \in  \partial U$, and integers $m_1,\ldots,m_n$ such that $\partial f = \sum m_i v_i$.
    Since $\partial f \subseteq  \partial U$, we must have $\sum m_i = 0$.
    Note that the values at these vertices must be killed. To do this, one can either connect two vertices of opposite sign outside of $U$, or add an infinite ray. Because both the number of $v_i$ as their coefficients are finite, there are at most finitely  many rays. Consider a ball enveloping the finite structure of $\hat{f}$, such that only the rays are escaping. Take any ray coming from a positive vertex and any ray coming from a negative vertex.  Since $X$ is one-ended, both rays can be connected outside of the chosen ball by a path $p$. If the path does not cross any other rays, we can reconnect locally. If it crosses other rays we can see this as sequence of basis vertices starting in one with a positive coefficient, ending in a negative one. Hence there exist two consecutive rays such that their vertices have opposite sign. This ends the proof.
\end{proof}

The above “finite extension” property for dimension $1$ is crucial in our proof that for transitive infinite graphs, vanishing of $\Hsuf_1$ is equivalent to $\Hsuf_1$-expansion.
Were it to also hold for higher dimension, the proof of~\Cref{transitive_graphs_expansion} would obviously generalise.

\subsubsection{$\Huf$}

In the above, we focused on $\Hsuf$, since $\Hsuf$-expansion is defined in terms of the simplicial complex at hand.
In the (coarse) setting of $\Huf$, this is obviously problematic.
Our aim now is essentially to coarsify (pure) $\Hsuf$-expansion.

%We start with a useful observation:
%\begin{lemma}\label{hsuf_expansion_stabilizes}
%    Let $X$ a ULF simplicial complex.
%    Then if $X$ has (pure) $\Hsuf_1$-expansion, so does $R_rX$ for any $r>0$.
%
%    Consequently, the sequence $(R_r X)_{r>0}$ either eventually has (pure) expansion, or eventually doesn't (i.e. there is a stabilization).
%\end{lemma}
%\begin{proof}
%    \newcommand\hf {\hat f}
%    Fix any $r>0$ and $U {\subseteq}_f VX$, $f \in  \Csuf_1(R_rU,\ZZ)$.
%    Assume that there exists $\hf\in \Zsuf_1(R_rX)$ (resp. $\Bsuf_1(R_rX)$) extending $f$.
%    
%    By definition, both $f$ and $\hf$ have support “virtual edges” of length $\leq r$.
%    As in~\Cref{section:preliminaries}, one can “draw” each such virtual edge using paths in $X$, and the difference between each virtual edge and its corresponding path will be the boundary of a sum of triangles of diameter $\leq r$.
%    This can be done for all of $f$, and there exists some $t_f \in  \Csuf_2(R_rX)$ with $f' = \partial (t_f)-f \in  \Zsuf_1(X)$.
%
%    Now, $f'$ and $\hf'$ satisfy the conditions of expansion, so can be killed…
%
%
%
%\end{proof}

\begin{definition}[$\Huf_1$-expansion]
    A ULF graph $X$ is said to have (pure) $\Huf_1$-expansion iff its Rips complexes eventually have (pure) $\Hsuf_1$-expansion.
\end{definition}

The following proposition justifies our interpretation of non-vanishing of $\Huf_1(X,\ZZ)$ in terms of the three phenomena of ends, large circuits and expansion:

\begin{proposition}\label{non_exp_for_good_reasons}
    If $X$ has $\ZZ$-small circuits (i.e. $\cC_r(X,\ZZ) = \cC_\infty (X,\ZZ)$ for some $r$) and is one-ended, then $\Huf_1(X,\ZZ)\neq 0$ implies that $X$ does not have pure $\Huf_1$-expansion.
\end{proposition}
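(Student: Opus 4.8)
I would prove the contrapositive form: assuming $X$ is one-ended, has $\ZZ$-small circuits, and \emph{does} have pure $\Huf_1$-expansion, I derive $\Huf_1(X,\ZZ)=0$. First fix an integer $s$ large enough that (i) the Rips complex $R_sX$ has pure $\Hsuf_1$-expansion (possible by the very definition of pure $\Huf_1$-expansion) and (ii) $s\ge r/2+1$, where $\cC_r(X,\ZZ)=\cC_\infty(X,\ZZ)$. Suppose towards a contradiction that $\Huf_1(X,\ZZ)\neq 0$. Using $\Huf_1(X,\ZZ)\cong\Zsuf_1(X,\ZZ)/\cC_\infty(X,\ZZ)$ together with the identity $\cC_\infty(X,\ZZ)=\Zsuf_1(X,\ZZ)\cap\Buf_1(X,\ZZ)$ of \Cref{lemma:CZB} and $\Buf_1(X,\ZZ)=\bigcup_t\Bsuf_1(R_tX,\ZZ)$, I pick a cycle $f\in\Zsuf_1(X,\ZZ)$ with $f\notin\Bsuf_1(R_tX,\ZZ)$ for all $t$; in particular $f\notin\Bsuf_1(R_sX,\ZZ)$. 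The rest of the argument contradicts this.

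Next I feed finite truncations of $f$ into the pure $\Hsuf_1$-expansion of $R_sX$. Fix $v_0\in VX$, put $U_i:=B(v_0,i)$ (a finite connected exhaustion) and $f_i:=f|_{U_i}$. For $i$ large, $\|f_i\|=\|f\|$, and since $f$ is a cycle, $\partial f_i$ vanishes at every vertex all of whose $X$-neighbours lie in $U_i$, hence $\supp(\partial f_i)\subseteq\partial U_i$ (computed in $R_sX$). The crucial point is that $f_i$ extends to a \emph{boundary} of $R_sX$ of the same norm. Indeed, $f_i$ is extended by $f\in\Zsuf_1(X,\ZZ)$ with $\|f\|=\|f_i\|$, so \Cref{finite_extension} — and this is exactly where one-endedness is used — yields a \emph{finite-support} cycle $\hat f_i\in\Zsuf_1(X,\ZZ)$ with $\hat f_i|_{U_i}=f_i$ and $\|\hat f_i\|=\|f\|$. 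A finite-support integer flow is a finite integer combination of circuits (no bi-infinite paths can appear), so $\hat f_i\in\cC_\infty(X,\ZZ)$; by $\ZZ$-smallness $\cC_\infty(X,\ZZ)=\cC_r(X,\ZZ)$, and \Cref{lemma:CZB} gives $\cC_r(X,\ZZ)\subseteq\Bsuf_1(R_{r/2+1}X,\ZZ)\subseteq\Bsuf_1(R_sX,\ZZ)$. Hence $\hat f_i\in\Bsuf_1(R_sX,\ZZ)$ with $\|\hat f_i\|=\|f_i\|$, which is precisely the hypothesis needed to invoke pure $\Hsuf_1$-expansion of $R_sX$ on the pair $(U_i,f_i)$.

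That invocation supplies, for each large $i$, a $2$-chain $g_i\in\Csuf_2(R_sX,\ZZ)$ with $\partial g_i|_{U_i}=f_i$ and $\|g_i\|\le K(\|f_i\|)=K(\|f\|)$. From here the reasoning is that of \Cref{expansion_implies_Huf_vanishes}: the $g_i$ are uniformly bounded, so in the pointwise topology a subsequence converges to some $g\in\Csuf_2(R_sX,\ZZ)$; by ULF $\partial$ is pointwise-continuous, so $\partial g_i\to\partial g$, and since $\partial g_i$ agrees with $f_i$ on $U_i$ while $\bigcup_iU_i=VX$ and $f_i\to f$ pointwise, we obtain $\partial g=f$ (viewed in $R_sX$). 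Thus $f\in\Bsuf_1(R_sX,\ZZ)$, contradicting the choice of $f$; hence $\Huf_1(X,\ZZ)=0$, which is the claim.

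I expect the only genuine difficulty to be the middle paragraph: manufacturing, uniformly in $i$, a finite-support extension $\hat f_i$ of $f_i$ that is simultaneously a boundary \emph{in the fixed complex $R_sX$}. One-endedness is what makes the extension finite-support via \Cref{finite_extension}; $\ZZ$-smallness is what makes that finite-support cycle bound a $2$-chain at a uniformly bounded Rips scale $\le r/2+1\le s$, rather than at a scale growing with $\diam(\supp\hat f_i)$. Dropping either hypothesis breaks the proof right here: without $\ZZ$-smallness one only gets $\hat f_i\in\Bsuf_1(R_{t_i}X,\ZZ)$ with $t_i\to\infty$, and without one-endedness the extension may be forced to carry an escaping ray and fail to bound at all. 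Everything else (continuity of $\partial$ for the pointwise topology, the compatibility of the various $\partial U$'s, and choosing $i$ large enough that $\|f_i\|=\|f\|$) is routine.
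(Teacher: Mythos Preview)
Your proof is correct and follows essentially the same route as the paper: both hinge on using \Cref{finite_extension} (one-endedness) to replace an extension-to-a-cycle by a finite-support extension, and then $\ZZ$-small circuits to force that finite cycle into $\Bsuf_1(R_sX,\ZZ)$ at a \emph{fixed} Rips scale. The only packaging difference is that the paper proves the abstract implication ``condition~2 $\Rightarrow$ condition~3'' for every pair $(U,f)$ and then cites \Cref{hsuf_rewritten} and \Cref{expansion_implies_Huf_vanishes}, whereas you run the limit argument of \Cref{expansion_implies_Huf_vanishes} by hand on truncations of a specific nontrivial cycle; since you start from $f\in\Zsuf_1(X,\ZZ)$ you also sidestep the paper's ``tracing virtual edges'' step.
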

In other words, if $\Huf_1(X,\ZZ)\neq 0$ then at least one of the three phenomena is responsible for it.
In the non-transitive case, it is not clear that the converse holds, that is, that $\Huf_1(X,\ZZ)=0$ would imply at least one of: ends, large circuits, expansion.

\begin{proof}
    \newcommand\hf {\hat f}
    Assume $\Huf_1(X,\ZZ)\neq 0$, $X$ is one-ended and for some $r$, $\cC_r(X,\ZZ) = \cC_\infty (X,\ZZ)$.
    Then, by~\Cref{hsuf_rewritten}, for $s$ big enough, $\Hsuf_1(R_sX,\ZZ)\neq 0$.
    By~\Cref{expansion_implies_Huf_vanishes}, it follows that $R_sX$ does not have expansion, and it remains to check that it dos not have \emph{pure} expansion either.
    
    Fix $(U,f)$ with $f \in  \Csuf_1(R_sU,\ZZ)$ with $\partial f\subseteq  \partial U$ and $f$ extensible.
    
    Since $R_sX$ does not have expansion, it suffices to check that there exists some $g\in \Csuf_2(R_sX,\ZZ)$ with $\partial g|{R_sU} = f$ and $\|\partial g\| = \|f\|$.
    Let $\hf$ a finite extension of $f$, as given by~\Cref{finite_extension}.
    By “tracing virtual edges”, $\hf$ is, up to a finite sum of boundaries of triangles in $R_sX$, an element of $\Zsuf_1(X,\ZZ)$, of finite support. 
    In particular, up to a finite sum of boundaries of triangles, $\hf$ lies in $\cC_\infty (X,\ZZ) = \cC_r(X,\ZZ)$, and can be decomposed into a sum of circuits of length $\leq r$.
    Now, if $s$ is large enough, each of those circuits can itself be decomposed as a sum of boundaries of triangles in $R_sX$, so that $\hf$ can be written as a sum of boundaries of triangles of $R_sX$.
    In other terms, $\hf=\partial g$ for some $g\in \Csuf_2(R_sX,\ZZ)$.

\end{proof}

\begin{theorem}[$\Huf_1$ triad]
    Let $X$ a transitive graph, and consider the following three conditions:
   \begin{enumerate}
       \item \label{ends}
            $X$ has more than one end.
        \item \label{large_circuits}
            $X$ has $\ZZ$-large circuits; that is, $\cC_r(X,\ZZ) \neq  \cC_\infty (X,\ZZ)$ for any $r$.
        \item \label{no_expansion}
            $X$ does not have pure $\Huf_1$-expansion.
    \end{enumerate}
    Then $\Huf_1(X,\ZZ)\neq 0$ iff either of the above holds.
\end{theorem}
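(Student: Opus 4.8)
The plan is to prove the two implications of the biconditional separately, reusing the machinery already developed. For the ``if'' direction (any one of \ref{ends}, \ref{large_circuits}, \ref{no_expansion} implies $\Huf_1(X,\ZZ)\neq 0$), I would dispatch each case in turn. If $X$ has more than one end (\ref{ends}), then $X$ contains an end-respecting tree $T$ with at least two ends; by~\Cref{tree_injection}, $\Huf_1(T,\ZZ)\hookrightarrow \Huf_1(X,\ZZ)$, and by~\Cref{theorembips}, $\Huf_1(T,\ZZ)\cong l^\infty(P)$ with $|P|\geq 1$, hence $\Huf_1(X,\ZZ)\neq 0$. If $X$ has $\ZZ$-large circuits (\ref{large_circuits}), then~\Cref{ZZ_large_circuits_implies_H_neq_zero} gives $\Huf_1(X,\ZZ)\neq 0$ directly (this is where vertex-transitivity of $X$ is used). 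If $X$ does not have pure $\Huf_1$-expansion (\ref{no_expansion}): suppose for contradiction that $\Huf_1(X,\ZZ)=0$. Then by~\Cref{transitive_graphs_expansion} applied to the Rips complexes $R_sX$ (which are themselves transitive, infinite, ULF) together with~\Cref{hsuf_rewritten}, one obtains that $R_sX$ has $\Hsuf_1$-expansion for all large $s$; hence $X$ has $\Huf_1$-expansion, a fortiori pure $\Huf_1$-expansion, contradicting \ref{no_expansion}. (One must check that $\Hsuf_1(R_sX,\ZZ)=0$ for large $s$ follows from $\Huf_1(X,\ZZ)=0$; in the $\ZZ$-small-circuits case this is immediate from~\Cref{hsuf_rewritten}, and in the $\ZZ$-large-circuits case we are already done by~\ref{large_circuits} via~\Cref{ZZ_large_circuits_implies_H_neq_zero}, so we may freely assume $\ZZ$-small circuits here.)

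For the ``only if'' direction, I would prove the contrapositive: if none of \ref{ends}, \ref{large_circuits}, \ref{no_expansion} holds, then $\Huf_1(X,\ZZ)=0$. So assume $X$ is one-ended, has $\ZZ$-small circuits, and \emph{has} pure $\Huf_1$-expansion. This is essentially the converse direction of~\Cref{non_exp_for_good_reasons}, run backwards. Since $X$ has $\ZZ$-small circuits, $\cC_r(X,\ZZ)=\cC_\infty(X,\ZZ)$ for some $r$, so by~\Cref{hsuf_rewritten} we have $\Huf_1(X,\ZZ)\cong \Hsuf_1(R_sX,\ZZ)$ for all large $s$, and it suffices to show $\Hsuf_1(R_sX,\ZZ)=0$ for some large $s$. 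By hypothesis $R_sX$ has pure $\Hsuf_1$-expansion for large $s$; the argument in the proof of~\Cref{non_exp_for_good_reasons} shows (using one-endedness via~\Cref{finite_extension}, and $\ZZ$-small circuits to decompose finite extensions into short circuits and then into boundaries of triangles in $R_sX$ for $s$ large) that every $f\in\Zsuf_1(R_sX,\ZZ)$ with $\partial f$ supported on $\partial U$ and extensible actually admits a bounding $g\in\Csuf_2(R_sX,\ZZ)$, so pure expansion upgrades to genuine $\Hsuf_1$-expansion of $R_sX$. Then~\Cref{expansion_implies_Huf_vanishes} gives $\Hsuf_1(R_sX,\ZZ)=0$, hence $\Huf_1(X,\ZZ)=0$.

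\textbf{Main obstacle.} The delicate point is the ``only if'' direction, specifically upgrading \emph{pure} $\Huf_1$-expansion to $\Huf_1$-expansion: one must verify that, under one-endedness and $\ZZ$-small circuits, the ``pure'' hypothesis in~\Cref{non_exp_for_good_reasons}'s extension clause is automatically met by every relevant $f$. Concretely, given $f\in\Csuf_1(R_sU,\ZZ)$ with $\partial f\subseteq\partial U$ that is extensible to a uniformly finite \emph{cycle}, one needs it to be extensible to a \emph{boundary} $\hat f\in\Bsuf_1(R_sX,\ZZ)$ of the same norm. This is exactly where the chain of reductions — finite extension (\Cref{finite_extension}), tracing virtual edges down to $X$, landing in $\cC_\infty(X,\ZZ)=\cC_r(X,\ZZ)$, re-triangulating short circuits inside $R_sX$ — must be assembled carefully, and where one checks the norm is not inflated. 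The other two cases of the ``if'' direction are essentially bookkeeping over results already proved; the transitivity hypothesis is genuinely used in cases~\ref{large_circuits} and~\ref{no_expansion} (to move finite pieces apart) and it should be flagged that ``transitive'' can be weakened to ``cocompact automorphism action'' throughout, as already noted after~\Cref{transitive_graphs_expansion}.
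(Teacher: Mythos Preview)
Your proposal is correct and follows essentially the same route as the paper. The only notable difference is organisational: for the ``only if'' direction, the paper simply invokes \Cref{non_exp_for_good_reasons} directly (if $\Huf_1(X,\ZZ)\neq 0$ but $X$ is one-ended with $\ZZ$-small circuits, then $X$ lacks pure $\Huf_1$-expansion), whereas you take the contrapositive and re-derive the key content of that proposition --- namely that under one-endedness and $\ZZ$-small circuits, every cycle-extensible $f$ is boundary-extensible with the same norm, so pure expansion coincides with expansion. Your ``main obstacle'' is thus exactly what \Cref{non_exp_for_good_reasons} already packages, so you can shorten the argument by citing it outright; the careful norm-preservation check you flag is handled there via \Cref{finite_extension}.
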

\begin{proof}
    
    By the results in~\Cref{section:trees}, know already that~\Cref{ends} implies non-zero homology.
    Similarly, the results in~\Cref{section:ZZ_large_circuits} give that~\Cref{large_circuits} implies non-zero homology.
    
    Finally, assume that~\Cref{no_expansion} holds.
    If any of~\Cref{ends,large_circuits} hold, then obviously $\Huf_1(X,\ZZ)\neq 0$ by what we already know.
    So, one may assume now that for some $r$ large enough, $\cC_r(X,\ZZ)=\cC_\infty (X,\ZZ)$.
    Since $R_rX$ does not have (pure) expansion ($r$ large enough), $\Hsuf_1(R_rX,\ZZ)\neq 0$, and by~\Cref{hsuf_rewritten}, $\Huf_1(X,\ZZ)\neq 0$ either.

    For the converse, it suffices to see that if $\Huf_1(X,\ZZ)\neq 0$, but $X$ has one end and $\ZZ$-small circuits (i.e. neither of \Cref{ends,large_circuits} hold), then we can apply \Cref{non_exp_for_good_reasons}

\end{proof}

    \bibliographystyle{unsrt} 
    \bibliography{biblio}
\end{document}